\newcommand{\ZZ}{\mathbb Z}
\newcommand{\FF}{\mathbb F}
\DeclareMathOperator{\Spec}{Spec}
\DeclareMathOperator{\Spc}{Spc}
\DeclareMathOperator{\Specinv}{Spec^\textup{inv}}
\newcommand{\ssm}{\smallsetminus}
\newcommand{\wtilde}{\widetilde}
\newcommand{\invlim}{\varprojlim}
\newcommand{\dirlim}{\varinjlim}
\renewcommand{\a}{\alpha}
\newcommand{\id}{\text{id}}
\DeclareMathOperator{\Ext}{Ext}
\newcommand{\cotensor}{\,\Box}
\DeclareMathOperator*{\colim}{colim}
\DeclareMathOperator{\cone}{cone}
\DeclareMathOperator{\Stable}{Stable}
\DeclareMathOperator{\Ch}{Ch}
\DeclareMathOperator{\thick}{thick}
\DeclareMathOperator{\im}{im}
\DeclareMathOperator{\End}{End}
\DeclareMathOperator{\supp}{supp}
\DeclareMathOperator{\res}{res}
\DeclareMathOperator{\incl}{incl}
\DeclareMathOperator{\ann}{ann}
\newcommand{\p}{\mathfrak p}
\newcommand{\q}{\mathfrak q}
\renewcommand\dirlim{\mathop{\mathpalette\varlim@{\rightarrowfill@\scriptstyle}}\nmlimits@}
\renewcommand\invlim{\mathop{\mathpalette\varlim@{\leftarrowfill@\scriptstyle}}\nmlimits@}
\newtheorem{thm}{Theorem}[section]
\newtheorem*{thm*}{Theorem}
\newtheorem{prop}[thm]{Proposition}
\newtheorem*{prop*}{Proposition}
\newtheorem{lem}[thm]{Lemma}
\newtheorem*{lem*}{Lemma}
\newtheorem{cor}[thm]{Corollary}
\newtheorem*{cor*}{Corollary}
\theoremstyle{definition}
\newtheorem{defn}[thm]{Definition}
\theoremstyle{remark}
\newtheorem{rem}[thm]{Remark}
\begin{document}

\title[\resizebox{4.5in}{!}{Tensor triangular geometry of modules over the mod 2 Steenrod algebra}]{Tensor triangular geometry of modules over the mod 2 Steenrod algebra}
\author{Collin Litterell}

\begin{abstract}
    We compute the Balmer spectrum of a certain tensor triangulated category of comodules over the mod 2 dual Steenrod algebra. This computation effectively classifies the thick subcategories, resolving a conjecture of Palmieri.
\end{abstract}

\maketitle
\tableofcontents

\section{Introduction}

In \cite{dhs}, Devinatz, Hopkins, and Smith revolutionized the field of stable homotopy theory by proving the nilpotence theorem. One significant and seemingly immediate consequence proved in \cite{hs} was the thick subcategory theorem, which gave a deep structural understanding of the stable homotopy category.

The thick subcategory theorem in stable homotopy theory inspired many similar results in other fields. This was initiated by Hopkins in \cite{hopkins-global}, where he described an algebraic analogue of the thick subcategory theorem for the derived category of a commutative ring. Neeman gave a corrected proof for Noetherian rings in \cite{neeman-chromatic}, and Thomason proved the general case and extended the result to quasi-compact and quasi-separated schemes in \cite{thomason}. In modular representation theory, Benson, Carlson, and Rickard proved an analogue of the thick subcategory theorem for the stable module category of a finite $p$-group in \cite{bcr}, which was extended to finite group schemes by Friedlander and Pevtsova in \cite{fp}.

Let $A$ be the mod 2 dual Steenrod algebra. In \cite{pal-stable}, Palmieri extensively studied a particular triangulated category $\Stable(A)$ of comodules over $A$ using the framework of ``axiomatic stable homotopy theory.'' One of his many significant results was an analogue of nilpotence theorem, which says in part that restricting to a particular quotient Hopf algebra $D$ of $A$ detects nilpotence. However, an analogue of the thick subcategory theorem did not immediately follow. Instead, Palmieri gave an insightful conjecture regarding the thick subcategories. The main goal of this paper is to resolve Palmieri's conjecture by providing a classification of the thick subcategories of compact objects of $\Stable(A)$.

\begin{thm}
    There is a homeomorphism
        \[\Spc(\Stable(A)^c) \cong \Specinv(\Ext_D^{**}(\FF_2, \FF_2))\]
    between the Balmer spectrum of $\Stable(A)^c$ and the Zariski spectrum of bihomogeneous prime ideals in $\Ext_D^{**}(\FF_2, \FF_2)$ which are invariant under a certain coaction of the Hopf algebra $A \cotensor_D \FF_2$. Consequently, there is a bijection
        \[\left\{\begin{tabular}{@{}c@{}} thick subcategories \\ of $\Stable(A)^c$ \end{tabular}\right\} \stackrel{\sim}{\longleftrightarrow} \left\{\begin{tabular}{@{}c@{}} Thomason subsets of \\ $\Specinv(\Ext_D^{**}(\FF_2, \FF_2))$ \end{tabular}\right\}.\]
\end{thm}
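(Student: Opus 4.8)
\medskip

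\noindent\textbf{Proof idea.} The plan is to compare $\Stable(A)$ with $\Stable(D)$ along the symmetric monoidal restriction functor $\res\colon \Stable(A)\to\Stable(D)$ associated to the Hopf algebra surjection $A\twoheadrightarrow D$, to use Palmieri's nilpotence theorem to make $\Spc(\res)$ surjective, to compute the source $\Spc(\Stable(D)^{c})$ by (graded-)commutative algebra, and finally to run a faithfully-flat descent argument governed by the sub-Hopf algebra $\Gamma:=A\cotensor_D\FF_2$ that identifies the image of $\Spc(\res)$ with the invariant locus. As a first step I would record that $\res$ preserves compact objects and sends the unit to the unit, so that it induces both a continuous map $\Spc(\res)\colon \Spc(\Stable(D)^{c})\to\Spc(\Stable(A)^{c})$ and a comparison of bigraded endomorphism rings of the units $\Ext_A^{**}(\FF_2,\FF_2)\to\Ext_D^{**}(\FF_2,\FF_2)$. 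Palmieri's nilpotence theorem, in the (bootstrapped) form asserting that $\res$ detects $\otimes$-nilpotence of morphisms between compact objects, then forces $\Spc(\res)$ to be surjective by Balmer's surjectivity criterion; it remains to compute the source and to understand the fibers.

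For the source: $D$ is chosen so that $\Ext_D^{**}(\FF_2,\FF_2)$ is a bigraded polynomial $\FF_2$-algebra, and I would prove a thick subcategory classification for $\Stable(D)^{c}$ --- equivalently, that $\Stable(D)$ is stratified by the canonical action of $\Ext_D^{**}(\FF_2,\FF_2)$ --- reducing to the Noetherian situation along the filtration of $\Ext_D^{**}(\FF_2,\FF_2)$ by finitely generated polynomial subalgebras and invoking Koszul duality together with the Benson--Iyengar--Krause local cohomology machinery (construction of the stratifying functors, verification of detection on polynomial generators, minimality). This yields a homeomorphism $\Spc(\Stable(D)^{c})\cong\Spec^{h}(\Ext_D^{**}(\FF_2,\FF_2))$ onto the space of bihomogeneous primes.

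For the fibers: the short exact sequence of Hopf algebras $\Gamma\hookrightarrow A\twoheadrightarrow D$ realizes $\res$ as a Galois-type extension with ``group'' $\Gamma=A\cotensor_D\FF_2$; concretely $\Gamma$ coacts on every $D$-comodule restricted from $\Stable(A)$ --- and in particular on $\Ext_D^{**}(\FF_2,\FF_2)$ (the coaction underlying the Lyndon--Hochschild--Serre spectral sequence of the extension) --- and this is the coaction named in the theorem. I would then show that $\Spc(\res)$ merges exactly those bihomogeneous primes that cannot be separated by $\Gamma$-subcomodules, so that the equivalence relation it induces on $\Spec^{h}(\Ext_D^{**}(\FF_2,\FF_2))$ is the one generated by the coaction, and that its image is precisely the set of $\Gamma$-invariant bihomogeneous primes $\Specinv(\Ext_D^{**}(\FF_2,\FF_2))$; combined with surjectivity this produces the asserted homeomorphism $\Spc(\Stable(A)^{c})\cong\Specinv(\Ext_D^{**}(\FF_2,\FF_2))$. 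Finally, since $\Stable(A)^{c}=\thick(\FF_2)$ is generated by the unit, every thick subcategory of $\Stable(A)^{c}$ is automatically a thick $\otimes$-ideal, and by rigidity of compact objects every such ideal is radical; Balmer's classification of radical thick $\otimes$-ideals then converts the homeomorphism into the stated bijection with Thomason subsets.

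I expect the main obstacle to be the fiber analysis: pinning down precisely when $\Spc(\res)$ merges two primes, excluding any collapsing beyond what the $A\cotensor_D\FF_2$-coaction dictates, and conversely realizing \emph{every} invariant bihomogeneous prime, all of which demands a careful comparison of supports across $\res$ in the presence of the infinitely many bidegrees of the polynomial generators. A secondary difficulty is the stratification input for $\Stable(D)$ used above, which --- if not already available in the literature --- must be carried out uniformly in the bigrading and in the infinitely many generators of $\Ext_D^{**}(\FF_2,\FF_2)$.
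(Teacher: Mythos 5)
Your overall skeleton --- restrict along $A \twoheadrightarrow D$, use Palmieri's nilpotence theorem plus the Balmer-type surjectivity criterion, compute $\Spc(\Stable(D)^c)$, and identify the image with the invariant primes --- matches the paper. But two of your load-bearing steps have genuine problems.

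First, your computation of the source rests on the claim that $\Ext_D^{**}(\FF_2,\FF_2)$ is a bigraded polynomial algebra. It is not: this ring is not fully known, only known up to $F$-isomorphism, and even the target of that $F$-isomorphism, $\FF_2[h_{ts} : s<t]/(h_{ts}h_{vu} : t \leq u)$, has relations. So the proposed Benson--Iyengar--Krause stratification along finitely generated polynomial subalgebras does not get off the ground as stated. The paper instead computes $\Spc(\Stable(D)^c)$ by a Quillen-stratification argument over the \emph{elementary} quotient Hopf algebras $E$ of $A$ (whose $\Ext$ rings genuinely are polynomial): Hovey--Palmieri's classification handles the finite-dimensional quotients, continuity of $\Spc$ along the filtration $E = \colim E_i$ handles the infinite-dimensional elementary ones, joint nilpotence detection by the family $\{\res_{D,E}\}$ gives surjectivity of $\colim_E \Spc(\Stable(E)^c) \to \Spc(\Stable(D)^c)$, and injectivity is obtained by realizing every $V(I)$ with $I$ finitely generated as the support of a Koszul-type object. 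Your instinct to filter by finitely generated subalgebras is in the right spirit, but the filtration that actually works is by quotient Hopf algebras, not subalgebras of $\Ext_D$.

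Second, and more seriously, the injectivity of $\Specinv(\Ext_D^{**}(\FF_2,\FF_2)) \to \Spc(\Stable(A)^c)$ --- the step you correctly flag as the main obstacle --- requires, for every finitely generated invariant ideal $I = (f_1,\ldots,f_k)$, a compact object of $\Stable(A)$ whose support is exactly $V^{\textup{inv}}(I)$. To build this one needs self-maps of compact objects of $\Stable(A)$ inducing multiplication by $f_j^{n_j}$ on $HD$-homology, and the tool that produces them is Palmieri's \emph{periodicity} theorem (applied to elements invariant modulo the annihilator of the previous stage). Your proposal never invokes the periodicity theorem and offers no substitute mechanism; the descent/Galois heuristics for the extension $\Gamma \to A \to D$ do not by themselves separate two distinct invariant primes in $\Spc(\Stable(A)^c)$. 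Relatedly, the homeomorphism is not established by a direct analysis of which primes $\Spc(\res_{A,D})$ merges: one first shows the map factors through the retraction $\p \mapsto \p^*$ (using that $\sqrt{\ann\, HE_{**}(X)}$ is an invariant ideal), and then proves the descended map is injective and open via the support-realization criterion above. Without the periodicity input, your argument establishes surjectivity onto $\Spc(\Stable(A)^c)$ but not injectivity, and hence not the homeomorphism.
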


This paper is organized as follows. In Section 2, we will provide more background on tensor triangular geometry, stable categories of comodules, and the dual Steenrod algebra. We will also briefly compare and contrast our situation with those of stable homotopy theory and modular representation theory. In Section 3, we will translate Hovey and Palmieri's classification result for finite-dimensional quotient Hopf algebras of $A$ into the language of tensor triangular geometry. In Section 4, we overcome the first major hurdle by extending Hovey and Palmieri's result to infinite-dimensional elementary quotient Hopf algebras of $A$ using the idea of continuity of the Balmer spectrum. In Section 5, we will compute $\Spc(\Stable(D)^c)$ from the Balmer spectra for the elementary quotient Hopf algebras of $A$ using the idea of Quillen stratification. Finally, in Section 6, we will compute $\Spc(\Stable(A)^c)$ using Palmieri's nilpotence and periodicity theorems. We also include an appendix on invariant prime ideals of Hopf algebroids, where we show that the invariant prime ideals form a spectral topological space.

\subsection{Acknowledgements}

I would like to thank John Palmieri for all of his guidance and support, for the many helpful conversations and comments, and for sharing his invaluable insights and expertise. Part of this work was carried out during the ``Spectral Methods in Algebra, Geometry, and Topology'' trimester program hosted by the Hausdorff Research Institute for Mathematics, which is funded by the Deutsche Forschungsgemeinschaft (DFG, German Research Foundation) under Germany's Excellence Strategy -- EXC-2047/1 -- 390685813.

\subsection{Conventions}

Any unadorned tensor products are either over the ground field $k$ (almost always $\FF_2$) or refer to the tensor product in a tensor triangulated category, which will be understood from context.

If an object is bigraded, the term \emph{homogeneous} is understood to mean bihomogeneous. All elements and ideals in (bi)graded rings or modules are assumed to be homogeneous.


\section{Preliminaries}

\subsection{Tensor triangular geometry}

A \emph{tt-category} (short for tensor triangulated category) is a triangulated category $\mathcal{C}$ with a symmetric monoidal structure $(\mathcal{C}, \otimes, \mathbb{1})$ which is compatible with the triangulated structure. In particular, the tensor product is exact in each variable. A functor between two tt-categories is called a \emph{tt-functor} if it is exact and strong symmetric monoidal.

Given a tt-category $\mathcal{C}$, a \emph{thick subcategory} of $\mathcal{C}$ is a full subcategory which is closed under isomorphisms, cofibers/cones, and retracts/direct summands. A \emph{tt-ideal} in $\mathcal{C}$ is a thick subcategory $\mathcal{I}$ of $\mathcal{C}$ such that $\mathcal{C} \otimes \mathcal{I} \subseteq \mathcal{I}$. A tt-ideal $\mathcal{I}$ is \emph{radical} if $a^{\otimes n} \in \mathcal{I}$ implies $a \in \mathcal{I}$, and it is \emph{prime} if it is proper and $a \otimes b \in \mathcal{I}$ implies $a \in \mathcal{I}$ or $b \in \mathcal{I}$. Given a set $S$ of objects in $\mathcal{C}$, let $\thick^\otimes\langle S \rangle$ denote the smallest tt-ideal containing $S$.

The motivation for classifying thick subcategories or tt-ideals comes from the often unattainable goal of classifying objects in $\mathcal{C}$ up to isomorphism. As a compromise, we can aim to classify objects in $\mathcal{C}$ up to isomorphism \emph{and} the operations coming from the tensor triangular structure. Note that $\thick^\otimes\langle X \rangle$ can be thought of as the collection of objects that can be built from $X$ using the basic operations in the tt-category $\mathcal{C}$ (e.g., taking cofibers or tensor products), so this question boils down to understanding when $\thick^\otimes\langle X \rangle = \thick^\otimes\langle Y \rangle$. Thus, classifying tt-ideals achieves our revised goal of classifying objects up to the tensor triangular structure.

Suppose $\mathcal{C}$ is an essentially small tt-category. The \emph{Balmer spectrum} of $\mathcal{C}$ is the set
    \[\Spc(\mathcal{C}) = \{\mathcal{P} \subsetneq \mathcal{C} : \mathcal{P} \text{ is a prime tt-ideal}\}.\]
The \emph{support} of an object $a \in \mathcal{C}$ is the subset
    \[\supp(a) = \{\mathcal{P \in \Spc(\mathcal{C}}) : a \notin \mathcal{P}\}.\]
The complements $U(a) = \Spc(\mathcal{C}) \ssm \supp(a)$ for $a \in \mathcal{C}$ form a basis for a topology on $\Spc(\mathcal{C})$. It turns out that computing $\Spc(\mathcal{C})$ as a topological space is equivalent to classifying the radical tt-ideals in $\mathcal{C}$ (see Theorems 16 and 17 in \cite{balmer-ttg} for precise statements). Moreover, if $\mathcal{C}$ is \emph{idempotent complete} (i.e., every idemopotent map splits), then every tt-ideal is radical and hence computing $\Spc(\mathcal{C})$ as a topological space is equivalent to classifying all tt-ideals in $\mathcal{C}$.

We also recall the \emph{comparison map} from \cite{balmer-sss}, which is a continuous map
    \[\rho_\mathcal{C} : \Spc(\mathcal{C}) \to \Spec^h(\End_\mathcal{C}^*(\mathbb{1}))\]
from the Balmer spectrum of $\mathcal{C}$ to the Zariski spectrum of homogeneous prime ideals in the graded endomorphism ring of the unit object in $\mathcal{C}$. This map is given by
    \[\mathcal{P} \mapsto \{f \in \End_\mathcal{C}^*(\mathbb{1}) : \cone(f) \notin \mathcal{P}\}.\]
Note that this extends easily to multigraded settings.

\subsection{Stable categories of comodules}

Given a graded commutative Hopf algebra $B$ over a field $k$, let $\Stable(B)$ denote the category whose objects are unbounded cochain complexes of injective graded left $B$-comodules and whose morphisms are cochain homotopy classes of bigraded maps. This is an axiomatic stable homotopy category in the sense of \cite{hps-axiomatic}, so in particular it is a tt-category. We note that $\Stable(B)$ is idempotent complete and rigidly-compactly generated. Moreover, there is a model category structure on the category $\Ch(B)$ of unbounded cochain complexes of left $B$-comodules such that the associated homotopy category is $\Stable(B)$. We refer the reader to \cite{hps-axiomatic} and \cite{pal-stable} for more details on $\Stable(B)$.

Recall the following notation:
    \begin{itemize}
        \item $S^0_B$ denotes the unit object in $\Stable(B)$, which is an injective resolution of the trivial comodule $k$.
        \item $[X, Y]^B_{**}$ denotes the morphisms between $X$ and $Y$ in $\Stable(B)$. Note that these are bigraded because we are working with cochain complexes of graded comodules; there is a homological grading and an internal grading. If $X$ and $Y$ are injective resolutions of $B$-comodules $M$ and $N$, respectively, then
            \[[X, Y]^B_{**} \cong \Ext_B^{**}(M, N).\]
        \item $\pi^B_{**}(X) = [S^0_B, X]^B_{**}$ denotes the homotopy groups of $X$. In particular,
            \[\pi^B_{**}(S^0_B) \cong \Ext_B^{**}(k, k).\]
        \item $E_*(X) = [S^0_B, E \otimes X]^B_{**}$ denotes the $E$-homology of $X$.
        \item $\Stable(B)^c$ denotes the full subcategory of compact objects, which is essentially small. Moreover, $\Stable(B)^c$ is equal to the thick subcategory generated by $S^0_B$, so as a consequence, every thick subcategory of $\Stable(B)^c$ is automatically a tt-ideal.
    \end{itemize}

For each conormal quotient Hopf algebra $C$ of $B$, there is a commutative associative ring object $HC \in \Stable(B)^c$ which is defined to be an injective resolution of the $B$-comodule $B \cotensor_C k$. Consequently, there is a Hopf algebroid
    \[\left(\pi^B_{**}(HC), \pi^B_{**}(HC \otimes HC)\right)\]
with $\pi^B_{**}(HC) \cong \Ext_C^{**}(k, k)$ and $\pi^B_{**}(HC \otimes HC) \cong HC_{**} \otimes (B \cotensor_C k)$ as left $\pi^B_{**}(HC)$-modules. It is claimed in \cite{pal-stable} that this is a Hopf algebra, but the computation for the right unit map $\eta_R$ is incorrect. If one carefully traces through all of the shearing and change-of-rings isomorphisms, one sees that $\eta_R$ is the map obtained by converting the left coaction of $B \cotensor_C k$ on $HC_{**}$ from Remark 1.2.13 of \cite{pal-stable} into a right coaction via the conjugation map of $B \cotensor_C k$. As a result,
    \[\left(\pi^B_{**}(HC), \pi^B_{**}(HC \otimes HC)\right) \cong \left(\Ext_C^{**}(k, k), \Ext_C^{**}(k, k) \otimes (B \cotensor_C k)\right)\]
is a split Hopf algebroid coming from the right coaction of the Hopf algebra $B \cotensor_C k$ on $\Ext_C^{**}(k, k)$. Therefore, any mention in \cite{pal-stable} of invariance under the coaction of the Hopf algebra $B \cotensor_C \FF_2$ can be equivalently interpreted as invariance under the right unit map $\eta_R$ of the Hopf algebroid $(\pi^B_{**}(HC), \pi^B_{**}(HC \otimes HC))$.

We will write $\Specinv(\Ext_B^{**}(\FF_2, \FF_2))$ for the subspace of $\Spec^h(\Ext_B^{**}(\FF_2, \FF_2))$ consisting of the invariant prime ideals (see Appendix A for precise definitions).

\subsection{The Steenrod algebra}

Let $A$ denote the mod 2 dual Steenrod algebra, which is a graded connected commutative Hopf algebra over $\FF_2$. Milnor showed in \cite{milnor-steenrod} that as an algebra,
    \[A \cong \FF_2[\xi_1, \xi_2, \xi_3, \ldots]\]
with $|\xi_n| = 2^n-1$, and the coproduct is given by
    \[\Delta(\xi_n) = \sum_{i=0}^n \xi_{n-i}^{2^i} \otimes \xi_i,\]
where $\xi_0 = 1$ by convention.

Adams and Margolis showed in \cite{adams-margolis-sub} that the quotient Hopf algebras of $A$ are exactly those of the form
    \[A/(\xi_1^{2^{n_1}}, \xi_2^{2^{n_2}}, \xi_3^{2^{n_3}}, \ldots).\]
for some sequence $(n_1, n_2, n_3, \ldots)$ in $\ZZ_{\geq 0} \cup \{\infty\}$ (where $\xi_i^{2^\infty}$ is interpreted as $0$) satisfying the following condition: for each $i, j$, either $n_i \leq n_{i+j} + j$ or $n_j \leq n_{i+j}$. The sequence $(n_1, n_2, n_3, \ldots)$ is referred to as the \emph{profile function} for the given quotient Hopf algebra.

Recall that a graded connected commutative Hopf algebra over a field $k$ of characteristic $p$ is called \emph{elementary} if its graded dual is commutative and satisfies $z^p = 0$ for all $z$ in the augmentation ideal. The elementary quotient Hopf algebras of $A$ were classified by Lin in \cite{lin-cohomology}, and they are precisely the quotient Hopf algebras of the following maximal elementary quotient Hopf algebras:
    \[E(m) = A/(\xi_1, \ldots, \xi_m, \xi_{m+1}^{2^{m+1}}, \xi_{m+2}^{2^{m+1}}, \xi_{m+3}^{2^{m+1}}, \ldots), \quad m \geq 0.\]

    \begin{figure}[t]
        \begin{subfigure}{0.45\textwidth}
        \begin{tikzpicture}[scale=0.6, every node/.style={transform shape}]
                \draw[stealth-stealth] (0,6.75) -- (0,0) -- (8.75,0);
                \draw[thick, -stealth] (0,0) -- (3,0) -- (3,3) -- (8.75,3);
                \draw (3.75,0.5) node[scale=1.5]{$\xi_{m+1}$};
                \draw (3.75,1.5) node[scale=1.5]{$\vdots$};
                \draw (3.75,2.5) node[scale=1.5]{$\xi_{m+1}^{2^m}$};
                \draw (5.25,0.5) node[scale=1.5]{$\xi_{m+2}$};
                \draw (5.25,1.5) node[scale=1.5]{$\vdots$};
                \draw (5.25,2.5) node[scale=1.5]{$\xi_{m+2}^{2^m}$};
                \draw (6.75,0.5) node[scale=1.5]{$\xi_{m+3}$};
                \draw (6.75,1.5) node[scale=1.5]{$\vdots$};
                \draw (6.75,2.5) node[scale=1.5]{$\xi_{m+3}^{2^m}$};
                \draw (8.25,0.5) node[scale=1.5]{$\cdots$};
                \draw (8.25,1.5) node[scale=1.5]{$\cdots$};
                \draw (8.25,2.5) node[scale=1.5]{$\cdots$};
        \end{tikzpicture}
        \caption{Profile function for $E(m)$}
        \label{fig:profile-E(m)}
        \end{subfigure}
    \quad
        \begin{subfigure}{0.45\textwidth}
        \begin{tikzpicture}[scale=0.6, every node/.style={transform shape}]
                \draw[stealth-stealth] (0,6.75) -- (0,0) -- (7.25,0);
                \draw[thick, -stealth] (0,0) -- (0,1) -- (1,1) -- (1,2) -- (2,2) -- (2,3) -- (3,3) -- (3,4) -- (4,4) -- (4,5) -- (5,5) -- (5,6) -- (6.25,6);
                \draw (0.625,0.5) node[scale=1.5]{$\xi_1$};
                \draw (1.625,0.5) node[scale=1.5]{$\xi_2$};
                \draw (1.625,1.5) node[scale=1.5]{$\xi_2^2$};
                \draw (2.625,0.5) node[scale=1.5]{$\xi_3$};
                \draw (2.625,1.5) node[scale=1.5]{$\xi_3^2$};
                \draw (2.625,2.5) node[scale=1.5]{$\xi_3^4$};
                \draw (3.625,0.5) node[scale=1.5]{$\xi_4$};
                \draw (3.625,1.5) node[scale=1.5]{$\xi_4^2$};
                \draw (3.625,2.5) node[scale=1.5]{$\xi_4^4$};
                \draw (3.625,3.5) node[scale=1.5]{$\xi_4^8$};
                \draw (4.625,0.5) node[scale=1.5]{$\xi_5$};
                \draw (4.625,1.5) node[scale=1.5]{$\xi_5^2$};
                \draw (4.625,2.5) node[scale=1.5]{$\xi_5^4$};
                \draw (4.625,3.5) node[scale=1.5]{$\xi_5^8$};
                \draw (4.625,4.5) node[scale=1.5]{$\xi_5^{16}$};
                \draw (5.625,0.5) node[scale=1.5]{$\xi_6$};
                \draw (5.625,1.5) node[scale=1.5]{$\xi_6^2$};
                \draw (5.625,2.5) node[scale=1.5]{$\xi_6^4$};
                \draw (5.625,3.5) node[scale=1.5]{$\xi_6^8$};
                \draw (5.625,4.5) node[scale=1.5]{$\xi_6^{16}$};
                \draw (5.625,5.5) node[scale=1.5]{$\xi_6^{32}$};
                \draw (6.625,0.5) node[scale=1.5]{$\cdots$};
                \draw (6.625,1.5) node[scale=1.5]{$\cdots$};
                \draw (6.625,2.5) node[scale=1.5]{$\cdots$};
                \draw (6.625,3.5) node[scale=1.5]{$\cdots$};
                \draw (6.625,4.5) node[scale=1.5]{$\cdots$};
                \draw (6.625,5.5) node[scale=1.5]{$\cdots$};
        \end{tikzpicture}
        \caption{Profile function for $D$}
        \label{fig:profile-D}
        \end{subfigure}
        \caption{Profile functions for quotient Hopf algebras of $A$}
    \end{figure}
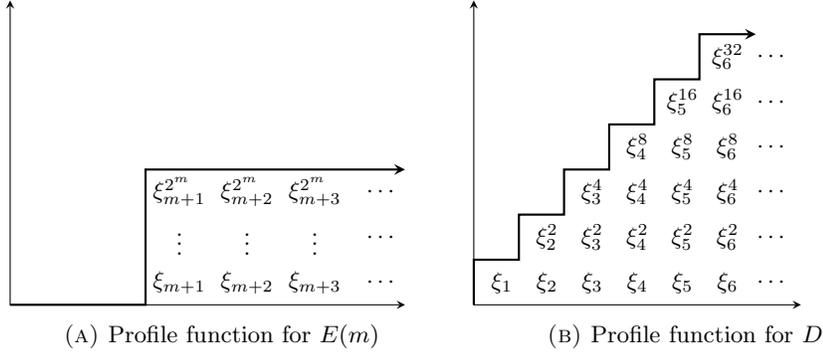

The quotient Hopf algebra $D$ mentioned in the Introduction is given by
    \[D = A/(\xi_1^2, \xi_2^4, \xi_3^8, \ldots, \xi_n^{2^n}, \ldots)\]
Note that $D$ is the smallest quotient Hopf algebra of $A$ containing all of the elementary quotients, which seems to give some explanation for its importance.

While the ring $\pi^A_{**}(HD) \cong \Ext_D^{**}(\FF_2, \FF_2)$ is not fully known, it is known ``up to nilpotents.'' To be more precise, let $\mathcal{Q}$ denote the category of elementary quotient Hopf algebras of $A$ with morphisms given by quotient maps. Then the quotient maps $D \twoheadrightarrow E$ for $E \in \mathcal{Q}$ induce a map
    \[\Ext_D^{**}(\FF_2, \FF_2) \to \lim_{E \in \mathcal{Q}} \Ext_E^{**}(\FF_2, \FF_2),\]
which by Proposition 4.2 of \cite{pal-quillen} is an \emph{$F$-isomorphism}, i.e., the kernel consists of nilpotent elements, and for every $y$ in the codomain, there is an integer $n$ such that $y^{2^n}$ is in the image. Palmieri explicitly computed
    \[\lim_{E \in \mathcal{Q}} \Ext_E^{**}(\FF_2, \FF_2) \cong \FF_2[h_{ts} : s < t]/(h_{ts}h_{vu} : t \leq u)\]
and an $A \cotensor_D \FF_2$-coaction given by
    \[h_{ts} \mapsto \sum_{j=0}^{\lfloor\frac{-s+t-1}{2}\rfloor} \sum_{i=j+s+1}^{t-j} \zeta_j^{2^s} \xi_{t-i-j}^{2^{i+j+s}} \otimes h_{i,j+s},\]
where $\zeta_j$ denotes the conjugate of $\xi_j$.

Palmieri's nilpotence theorem says that the object $HD$ detects nilpotence in $\Stable(A)$, suggesting that $HD$ plays a role analogous to $MU$ in stable homotopy theory. Therefore, just as the Balmer spectrum for the stable homotopy category of finite spectra is $\Spc(\text{SHC}^c) \cong \Specinv(MU_*)$, we might predict that $\Spc(\Stable(A)^c) \cong \Specinv(\Ext_D^{**}(\FF_2, \FF_2))$. This analogy also suggests a possible route for computing $\Spc(\Stable(A)^c)$: construct residue field objects $K(\p)$ analogous to the Morava $K$-theories for each $\p \in \Specinv(\Ext_D^{**}(\FF_2, \FF_2))$ and prove that they detect nilpotence. Using techniques from \cite{johnson-yosimura}, such a $K(\p)$ can be constructed for every $\p$ that is generated by an invariant regular sequence. Unfortunately, the author does not know whether or not every invariant prime ideal in $\Ext_D^{**}(\FF_2, \FF_2)$ can be generated by an invariant regular sequence.

From the perspective of modular representation theory, at first glance it might seem that we should expect the Balmer spectrum of $\Stable(A)^c$ to be homeomorphic to $\Spec^h(\Ext_A^{**}(\FF_2, \FF_2)$. In fact, Hovey and Palmieri used the approach of Benson, Carlson, and Rickard to prove this result for \emph{finite-dimensional} quotient Hopf algebras of $A$ (see Theorem \ref{thm:hovpal}). However, since $A$ is infinite-dimensional, some of the key tools and techniques (e.g., rank varieties) become unavailable. To see why we might instead expect $\Spc(\Stable(A)^c) \cong \Specinv(\Ext_D^{**}(\FF_2, \FF_2))$ from the modular representation theory perspective, consider Palmieri's Quillen stratification theorem, which says that there is an $F$-isomorphism
    \[\Ext_A^{**}(\FF_2, \FF_2) \to \Ext_D^{**}(\FF_2, \FF_2)^{A \cotensor_D \FF_2}.\]
Since $A \cotensor_D \FF_2$ is not a finite group but rather an infinite-dimensional group scheme, instead of looking at
    \[\Spec^h(\Ext_A^{**}(\FF_2, \FF_2)) \cong \Spec^h(\Ext_D^{**}(\FF_2, \FF_2)^{A \cotensor_D \FF_2}),\]
it seems to make more sense to look at the quotient stack
    \[[\Spec^h(\Ext_D^{**}(\FF_2, \FF_2))/\Spec^h(A \cotensor_D \FF_2)],\]
whose underlying topological space should be $\Specinv(\Ext_D^{**}(\FF_2, \FF_2))$. This is not entirely rigorous, but it serves as good motivation.


\section{Finite-dimensional quotient Hopf algebras of $A$}

In \cite{hov-pal-stable} and \cite{hov-pal-galois}, Hovey and Palmieri used techniques and tools from modular representation theory to classify the thick subcategories of $\Stable(B)^c$ for $B$ a finite-dimensional quotient Hopf algebra of $A$. In this brief section, we translate their result into the language of tensor triangular geometry.

\begin{thm}[Hovey-Palmieri]\label{thm:hovpal}
    For any finite-dimensional quotient Hopf algebra $B$ of $A$, the comparison map
        \[\rho_B : \Spc(\Stable(B)^c) \to \Spec^h(\Ext_B^{**}(\FF_2, \FF_2))\]
    is a homeomorphism.
\end{thm}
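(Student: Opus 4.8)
The plan is to deduce the theorem from Hovey and Palmieri's classification of thick subcategories together with Balmer's formalism of classifying support data. For $B$ finite-dimensional, Hovey and Palmieri attach to each $X \in \Stable(B)^c$ a ``variety'' which, after the relevant change of bookkeeping, is a closed subset $V_B(X) \subseteq \Spec^h(\Ext_B^{**}(\FF_2,\FF_2))$ --- the support of a suitable finitely generated $\Ext_B^{**}(\FF_2,\FF_2)$-module built from $X$, such as $[X,X]^B_{**}$ --- and prove that $X \mapsto V_B(X)$ induces a bijection between thick subcategories of $\Stable(B)^c$ and specialization-closed subsets of $\Spec^h(\Ext_B^{**}(\FF_2,\FF_2))$. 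I would first recast $V_B$ as a support datum in Balmer's sense on the space $X_B := \Spec^h(\Ext_B^{**}(\FF_2,\FF_2))$. This space is spectral, and it is Noetherian because $\Ext_B^{**}(\FF_2,\FF_2)$ is a finitely generated bigraded $\FF_2$-algebra, by finite generation of the cohomology of the finite-dimensional Hopf algebra dual to $B$. The assignment $V_B$ satisfies Balmer's axioms, the only nonformal one being the tensor-product formula $V_B(X \otimes Y) = V_B(X) \cap V_B(Y)$, which is precisely Hovey and Palmieri's ``tensor product theorem,'' established via rank varieties in the spirit of \cite{bcr}.

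Next I would invoke the facts recorded in Section 2: $\Stable(B)^c = \thick\langle S^0_B\rangle$, so every thick subcategory is a tt-ideal, and $\Stable(B)$ is idempotent complete, so every tt-ideal is radical; hence classifying thick subcategories is the same as classifying radical tt-ideals. Since $X_B$ is Noetherian, its Thomason subsets coincide with its specialization-closed subsets, so Hovey and Palmieri's theorem says exactly that $V_B$ is a \emph{classifying} support datum: the assignment $Y \mapsto \{X \in \Stable(B)^c : V_B(X) \subseteq Y\}$ is a bijection from Thomason subsets of $X_B$ onto radical tt-ideals of $\Stable(B)^c$. Combining this with Balmer's bijection (Theorems 16 and 17 of \cite{balmer-ttg}) between radical tt-ideals of $\Stable(B)^c$ and Thomason subsets of $\Spc(\Stable(B)^c)$, and observing that both bijections are realized through the support, one obtains a homeomorphism
\[
    f_B : X_B \xrightarrow{\ \sim\ } \Spc(\Stable(B)^c), \qquad f_B(\p) = \{X : \p \notin V_B(X)\},
\]
with $V_B(X) = f_B^{-1}(\supp(X))$ for every $X$.

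It then remains to identify $f_B^{-1}$ with the comparison map $\rho_B$, i.e.\ to verify $\rho_B \circ f_B = \id_{X_B}$. Fix $\p \in X_B$ and a homogeneous element $h \in \Ext_B^{**}(\FF_2,\FF_2) = \End_{\Stable(B)^c}^{**}(S^0_B)$, regarded as a self-map of a suspension of $S^0_B$. By the definitions of $\rho_B$ and $f_B$, we have $h \in \rho_B(f_B(\p))$ if and only if $\cone(h) \notin f_B(\p)$, i.e.\ if and only if $\p \in V_B(\cone(h))$. Applying $\pi^B_{**}(-)$ to the defining triangle of $\cone(h)$ and using that $V_B$ is the support of the resulting $\Ext_B^{**}(\FF_2,\FF_2)$-module, a standard computation identifies $V_B(\cone(h))$ with the Zariski zero locus $V(h) = \{\q : h \in \q\}$; therefore $\p \in V_B(\cone(h))$ if and only if $h \in \p$. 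Hence $\rho_B(f_B(\p)) = \p$, so $\rho_B = f_B^{-1}$ is a homeomorphism, which is the assertion of the theorem. (Alternatively, once $V_B$ is known to be a classifying support datum coinciding with the cohomological support, one may appeal directly to Balmer's criteria from \cite{balmer-sss} for $\rho_B$ to be a homeomorphism.)

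The main obstacle is not in the tt-geometric repackaging above but in the input it relies on: that $V_B$ is a classifying support datum on $\Spec^h(\Ext_B^{**}(\FF_2,\FF_2))$ --- in particular the tensor-product formula $V_B(X\otimes Y) = V_B(X)\cap V_B(Y)$, together with the precise identification of Hovey and Palmieri's indexing set of varieties with $\Spec^h(\Ext_B^{**}(\FF_2,\FF_2))$. This is exactly where the finite-dimensionality of $B$ is used, through the rank-variety techniques that become unavailable once $B$ is infinite-dimensional --- which is why the later sections must proceed differently. Everything else (spectrality and Noetherianity of $X_B$, the formal support axioms, the passage from a classifying support datum to a homeomorphism of spectra, and the comparison with $\rho_B$) is routine given the facts already assembled in the excerpt.
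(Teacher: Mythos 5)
Your proposal is correct and follows essentially the same route as the paper: cite Hovey--Palmieri's classification as saying that the cohomological support is a classifying support datum on $\Spec^h(\Ext_B^{**}(\FF_2,\FF_2))$, apply Balmer's Theorem 16 to obtain a homeomorphism $\varphi_B$ from that space to $\Spc(\Stable(B)^c)$, and verify $\rho_B \circ \varphi_B = \id$ by computing that $\cone(h)$ is supported exactly on $V(h)$. The paper phrases the support as $\sigma_B(X) = \{\p : X_\p \neq 0\}$ rather than via $[X,X]^B_{**}$ and does not need the Noetherianity discussion explicitly, but these are bookkeeping differences only.
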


\begin{proof}
    By \cite{hov-pal-stable} and \cite{hov-pal-galois}, the support theory on $\Stable(B)^c$ given by
        \[\sigma_B(X) = \{\p \in \Spec^h(\Ext_B^{**}(\FF_2, \FF_2)) : X_\p \neq 0\}\]
    induces a bijection
        \[\left\{\begin{tabular}{@{}c@{}} thick subcategories \\ of $\Stable(B)^c$ \end{tabular}\right\} \stackrel{\sim}{\longleftrightarrow} \left\{\begin{tabular}{@{}c@{}} Thomason subsets \\ of $\Spec^h(\Ext_B^{**}(\FF_2, \FF_2))$ \end{tabular}\right\}.\]
    Therefore, by Theorem 16 of \cite{balmer-ttg}, the universal map
        \[\varphi_B : \Spec^h(\Ext_B^{**}(\FF_2, \FF_2)) \to \Spc(\Stable(B)^c)\]
    given by
        \begin{align*}
            \varphi_B(\p) &= \{X \in \Stable(B)^c : \p \notin \sigma_B(X)\} \\
            &= \{X \in \Stable(B)^c : X_\p = 0\}
        \end{align*}
    is a homeomorphism. Since
        \begin{align*}
            \rho_B(\varphi_B(\p)) &= \{f \in \Ext_B^{**}(\FF_2, \FF_2) : \cone(f) \notin \varphi_B(\p)\} \\
            &= \{f \in \Ext_B^{**}(\FF_2, \FF_2) : \cone(f)_\p \neq 0\} \\
            &= \{f \in \Ext_B^{**}(\FF_2, \FF_2) : f \text{ is not a unit in } \Ext_B^{**}(\FF_2, \FF_2)_\p\} \\
            &= \p,
        \end{align*}
    $\rho_B$ is the inverse of $\varphi_B$ and is hence also a homeomorphism.
\end{proof}


\section{Elementary quotient Hopf algebras of $A$}

Throughout this section, let $E$ denote an elementary quotient Hopf algebra of $A$. Let $m = \min\{n : \xi_{n+1} \in E\}$, and consider the finite-dimensional quotient Hopf algebras
    \[E_i = E/(\xi_{m+i+1}, \xi_{m+i+2}, \ldots)\]
for $i \geq 0$. Pictorially, $E_i$ consists of the first $i$ columns of $E$.

    \begin{figure}[h]
        \centering
        \begin{tikzpicture}[scale=0.75, every node/.style={transform shape}]
                    \draw[stealth-stealth] (0,5) -- (0,0) -- (13.25,0);
                    \draw[thick] (0,0) -- (3,0) -- (3,3) -- (7.675,3) -- (7.675,0) -- (13.25,0);
                    \draw[thick, -stealth, dashed] (0,0) -- (3,0) -- (3,2.99) -- (13.25,2.99);
                    \draw (3.875,0.5) node[scale=1.5]{$\xi_{m+1}$};
                    \draw (3.875,2) node[scale=1.5]{$\vdots$};
                    \draw (5.25,0.5) node[scale=1.5]{$\cdots$};
                    \draw (5.25,2) node[scale=1.5]{$\cdots$};
                    \draw (6.625,0.5) node[scale=1.5]{$\xi_{m+i}$};
                    \draw (6.625,2) node[scale=1.5]{$\vdots$};
                    \draw (8.875,0.5) node[scale=1.5]{$\xi_{m+i+1}$};
                    \draw (8.875,2) node[scale=1.5]{$\vdots$};
                    \draw (10.875,0.5) node[scale=1.5]{$\xi_{m+i+2}$};
                    \draw (10.875,2) node[scale=1.5]{$\vdots$};
                    \draw (12.675,0.5) node[scale=1.5]{$\cdots$};
                    \draw (12.675,2) node[scale=1.5]{$\cdots$};
            \end{tikzpicture}
        \caption{Profile functions for $E_i$ (solid line) and $E$ (dashed line)}
        \label{fig:enter-label}
    \end{figure}
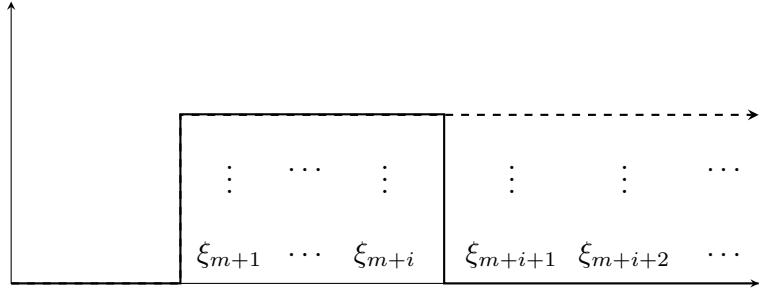

The aim of this section is to compute $\Spc(\Stable(E)^c)$ from $\Spc(\Stable(E_i)^c)$ using the continuity of the Balmer spectrum, a technique described by Gallauer in \cite{gal-filtered}. We will exploit the particularly nice structure of the elementary quotient Hopf algebras of $A$.

Recall the following definition from \cite{hps-axiomatic}.

\begin{defn}
    A \emph{stable morphism} is a tt-functor which admits a right adjoint and preserves compactness.
\end{defn}

For our interests, stable morphisms arise in the following way: a map $B \to B'$ of Hopf algebras induces a stable morphism $\Stable(B) \to \Stable(B')$ by Theorem 9.5.1 of \cite{hps-axiomatic}.

In particular, the quotient maps $E \twoheadrightarrow E_i$ and $E_j \twoheadrightarrow E_i$ for $i \leq j$ induce stable morphisms
    \[\res_i : \Stable(E) \to \Stable(E_i),\]
    \[\res_{j,i} : \Stable(E_j) \to \Stable(E_i).\]
which satisfy
    \[\res_{j,i} \circ \res_{k,j} = \res_{k,i},\]
    \[\res_{j,i} \circ \res_j = \res_i\]
for $i \leq j \leq k$. Notice that the Hopf algebra kernel of the quotient map $E \twoheadrightarrow E_i$ is isomorphic to the quotient Hopf algebra $E/(\xi_{m+1}, \ldots, \xi_{m+i})$, and similarly for $i \leq j$, the Hopf algebra kernel of the quotient map $E_j \twoheadrightarrow E_i$ is isomorphic to the quotient Hopf algebra $E_j/(\xi_{m+1}, \ldots, \xi_{m+i})$. Therefore, we get splittings $E_i \hookrightarrow E$ and $E_i \hookrightarrow E_j$
which induce stable morphisms
    \[\incl_i : \Stable(E_i) \to \Stable(E),\]
    \[\incl_{i,j} : \Stable(E_i) \to \Stable(E_j).\]
Note that for $i \leq j \leq k$,
    \[\incl_{j,k} \circ \incl_{i,j} \cong \incl_{i,k},\]
    \[\incl_j \circ \incl_{i,j} \cong \incl_i.\]
Moreover, for $i \leq j$,
    \[\res_{j,i} \circ \incl_{i,j} \cong \id_{\Stable(E_i)},\]
    \[\res_i \circ \incl_i \cong \id_{\Stable(E_i)}.\]

We may now assemble all of the inclusion functors into a pseudo-functor
    \[\Stable(E_\bullet)^c : \ZZ_{\geq 0} \to 2\text{-}ttCat\]
and a pseudo-natural transformation
    \[\incl : \Stable(E_\bullet)^c \to \Stable(E)^c.\]
Our goal is to show the following:
    \begin{itemize}
        \item $\incl : \Stable(E_\bullet)^c \to \Stable(E)^c$ is \emph{surjective on morphisms}, i.e., for each morphism $f : X \to Y$ in $\Stable(E)^c$, there exists $i \geq 0$ and a morphism $f_i : X_i \to Y_i$ in $\Stable(E_i)^c$ such that $\incl_i(f_i) = f$,
        \item $\incl : \Stable(E_\bullet)^c \to \Stable(E)^c$ \emph{detects isomorphisms}, i.e., if $X_i, Y_i \in \Stable(E_i)^c$ with $\incl_i(X_i) \cong \incl_i(Y_i)$ in $\Stable(E)^c$, then there exists $j \geq i$ such that $\incl_{i,j}(X_i) \cong \incl_{i,j}(Y_i)$ in $\Stable(E_j)^c$.
    \end{itemize}

We start by proving a helpful lemma.

\begin{lem}\label{lem:incl-of-res}
    Suppose $X \in \Stable(E)^c$ with $X \cong \incl_{i_0}(X_{i_0})$ for some $i_0$ and $X_{i_0} \in \Stable(E_{i_0})^c$. Then for $i \geq i_0$,
        \[\incl_i(\res_i(X)) \cong X\]
    and for $j \geq i \geq i_0$,
        \[\incl_{i,j}(\res_i(X)) \cong \res_j(X).\]
\end{lem}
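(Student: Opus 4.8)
The plan is to reduce everything to the single identity $\incl_i \circ \res_i(X) \cong X$ for $X$ coming from $\Stable(E_{i_0})^c$ with $i \ge i_0$, since the second statement follows formally: applying $\incl_j = \incl_i \circ \incl_{i,j}$ (up to the coherence isomorphism of the pseudo-functor) to $\res_i(X)$ and using $\res_{j,i} \circ \res_j \cong \res_i$, together with $\res_{j,i} \circ \incl_{i,j} \cong \id$, one gets $\incl_{i,j}(\res_i(X))$ and $\res_j(X)$ agreeing after applying the faithful-enough functor $\incl_j$; more directly, I would just compute $\incl_j(\incl_{i,j}(\res_i X)) \cong \incl_i(\res_i X) \cong X \cong \incl_i(\res_i X) \cong \incl_j(\res_j X)$ once the first statement is known for all relevant indices, and then argue that $\incl_j$ reflects the isomorphism --- but actually it is cleaner to prove the second statement hands-on by the same mechanism as the first, so let me focus on the first.

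For the first statement, write $X \cong \incl_{i_0}(X_{i_0})$ and apply $\res_i$. Using $\res_i \cong \res_{i,i_0}$-type relations in the other direction is not available, so instead I would use the relation $\res_i \circ \incl_i \cong \id$ after first rewriting $\incl_{i_0} \cong \incl_i \circ \incl_{i_0, i}$ (valid since $i \ge i_0$). Thus $\res_i(X) \cong \res_i(\incl_i(\incl_{i_0,i}(X_{i_0}))) \cong \incl_{i_0,i}(X_{i_0})$. Now apply $\incl_i$: $\incl_i(\res_i(X)) \cong \incl_i(\incl_{i_0,i}(X_{i_0})) \cong \incl_{i_0}(X_{i_0}) \cong X$, using the pseudo-functoriality coherence $\incl_i \circ \incl_{i_0,i} \cong \incl_{i_0}$. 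This is really just a diagram chase through the compatibility relations already listed in the text; the content is entirely in the structural identities among $\res$ and $\incl$, which themselves come from the splittings $E_i \hookrightarrow E$ and the Hopf-algebra-kernel descriptions.

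For the second statement, with $j \ge i \ge i_0$, I apply the first statement at level $j$ (legitimate since $X \cong \incl_{i_0}(X_{i_0})$ and $j \ge i_0$) to get $\res_j(X) \cong \incl_{i_0,j}(X_{i_0})$, and at level $i$ to get $\res_i(X) \cong \incl_{i_0,i}(X_{i_0})$. Then $\incl_{i,j}(\res_i(X)) \cong \incl_{i,j}(\incl_{i_0,i}(X_{i_0})) \cong \incl_{i_0,j}(X_{i_0}) \cong \res_j(X)$, again invoking the coherence $\incl_{i,j}\circ\incl_{i_0,i} \cong \incl_{i_0,j}$. I do not anticipate a genuine obstacle here: the only mild subtlety is bookkeeping the pseudo-natural-transformation coherence isomorphisms (so that composites like $\incl_j \circ \incl_{i_0,j}$ really do agree with $\incl_{i_0}$ up to canonical 2-isomorphism rather than on the nose), and making sure the index inequalities are always in the range where the splitting maps are defined; but since the paper has already recorded all the needed relations up to isomorphism, the proof is a short formal manipulation rather than anything requiring new input.
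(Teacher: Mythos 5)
Your argument is correct and is essentially identical to the paper's proof: both establish $\res_i(X) \cong \incl_{i_0,i}(X_{i_0})$ via $\res_i \circ \incl_i \cong \id$ and $\incl_i \circ \incl_{i_0,i} \cong \incl_{i_0}$, then apply $\incl_i$ (resp.\ $\incl_{i,j}$, using the same identification at level $j$) to conclude. The preliminary detour about reflecting isomorphisms is unnecessary, as you yourself note, and the final argument matches the paper's.
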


\begin{proof}
    Since $\res_i \circ \incl_i \cong \id_{\Stable(E_i)}$ and $\incl_i \circ \incl_{i_0,i} \cong \incl_{i_0}$,
        \[\incl_{i_0,i}(X_{i_0}) \cong \res_i(\incl_i(\incl_{i_0,i}(X_{i_0}))) \cong \res_i(\incl_{i_0}(X_{i_0})) \cong \res_i(X),\]
    hence
        \[\incl_i(\res_i(X)) \cong \incl_i(\incl_{i_0,i}(X_{i_0})) \cong \incl_{i_0}(X_{i_0}) \cong X.\]
    Similarly, since $\incl_{i,j} \circ \incl_{i_0,i} \cong \incl_{i_0,j}$,
        \[\incl_{i,j}(\res_i(X)) \cong \incl_{i,j}(\incl_{i_0,i}(X_{i_0})) \cong \incl_{i_0,j}(X_{i_0}) \cong \res_{j}(X),\]
    where the last isomorphism comes from replacing $i$ with $j$ in the first step of the proof.
\end{proof}

\begin{lem}\label{lem:surjective on morphisms}
    The pseudo-natural transformation
        \[\incl : \Stable(E_\bullet)^c \to \Stable(E)^c\]
    is surjective on morphisms. 
\end{lem}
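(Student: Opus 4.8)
The plan is to reduce surjectivity on morphisms to a single continuity property of the hom-sets of $\Stable(E)$, namely that for every $i$ and all $X_i, Y_i \in \Stable(E_i)^c$ the natural map
\[\colim_{j \ge i}\, [\incl_{i,j}(X_i), \incl_{i,j}(Y_i)]^{E_j}_{**} \longrightarrow [\incl_i(X_i), \incl_i(Y_i)]^E_{**},\]
induced by the functors $\incl_j$ together with the coherence isomorphisms $\incl_j \circ \incl_{i,j} \cong \incl_i$, is an isomorphism. (Only surjectivity is needed for the lemma, but injectivity comes along for free and is convenient below.) Granting this, the argument finishes quickly. First, the full subcategory $\mathcal{D} \subseteq \Stable(E)^c$ of objects isomorphic to some $\incl_i(X_i)$ is thick: it is closed under (de)suspension; the continuity statement makes it closed under cofibers (lift a map to a finite stage, then use that $\incl_j$ commutes with cones) and under retracts (an idempotent on $\incl_i(W_i)$ is $\incl_j$ of some $e_j$ for $j \gg 0$, which becomes a genuine idempotent after raising $j$ once more, and $\Stable(E_j)^c$ is idempotent complete); and it contains $S^0_E \cong \incl_0(S^0_{E_0})$. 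Since $\Stable(E)^c = \thick\langle S^0_E\rangle$, this gives $\mathcal{D} = \Stable(E)^c$. Then, given $f : X \to Y$ in $\Stable(E)^c$, replace $X$ and $Y$ by $\incl_i(X_i)$ and $\incl_i(Y_i)$ for a common $i$ and apply the continuity statement to realize $f$ as $\incl_j(f_j)$ for some $j \ge i$ and $f_j : \incl_{i,j}(X_i) \to \incl_{i,j}(Y_i)$ in $\Stable(E_j)^c$.

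To prove the continuity statement I would reduce to the unit object. Since each $\incl_i$ is a tt-functor and $\Stable(E)$ is rigidly-compactly generated, $[\incl_i X_i, \incl_i Y_i]^E_{**} \cong \pi^E_{**}(\incl_i(Z_i))$ with $Z_i := X_i^\vee \otimes Y_i \in \Stable(E_i)^c$, and similarly at every stage $j$; so it suffices to show that the two functors $Z \mapsto \pi^E_{**}(\incl_i Z)$ and $Z \mapsto \colim_j \pi^{E_j}_{**}(\incl_{i,j} Z)$ on $\Stable(E_i)^c$ are identified by the evident natural transformation. Both are homological (filtered colimits of homological functors are homological), so since $\Stable(E_i)^c = \thick\langle S^0_{E_i}\rangle$ it is enough to check this on the unit. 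Using $\incl_i(S^0_{E_i}) \cong S^0_E$ and $\incl_{i,j}(S^0_{E_i}) \cong S^0_{E_j}$, the statement on the unit becomes exactly
\[\colim_j \Ext_{E_j}^{**}(\FF_2, \FF_2) \cong \Ext_E^{**}(\FF_2, \FF_2),\]
the maps being those induced by the Hopf-algebra inclusions $E_j \hookrightarrow E_{j+1} \hookrightarrow E$. This is where I would exploit the structure of the elementary quotients: identified via their splittings with the sub-Hopf-algebras of $E$ spanned by the first $j$ columns, the $E_j$ exhaust $E$, so $E = \dirlim_j E_j$ as Hopf algebras; and $\Ext_C^{**}(\FF_2, \FF_2) = \mathrm{Cotor}_C^{**}(\FF_2, \FF_2)$ is computed by the cobar complex, which commutes with filtered colimits of coalgebras, as does cohomology, giving the displayed isomorphism.

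I expect the continuity statement to be the main obstacle — not the $\Ext$ identification itself, which is immediate once $E = \dirlim_j E_j$ is recognized, but the surrounding bookkeeping: checking that the comparison map is natural and compatible with exact triangles so that the reduction to the unit object is legitimate, and that $\incl_i$ is strong monoidal and preserves compactness and rigidity (both already established for stable morphisms) so that the collapse of $[-,-]_{**}$ to $\pi^{}_{**}(-)$ is valid. Lemma~\ref{lem:incl-of-res} is not logically required for this route, but it can be used to make the choices of lifts in the two finishing steps functorial.
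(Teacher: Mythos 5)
Your proposal is correct, and while the overall skeleton (show the image subcategory is thick and contains the unit, then lift morphisms via a filtered-colimit statement on hom-sets) matches the paper, your proofs of the two technical steps are genuinely different. For the key continuity statement $\colim_j [\incl_{i,j}X_i,\incl_{i,j}Y_i]^{E_j}_{**} \cong [\incl_i X_i,\incl_i Y_i]^E_{**}$, the paper works directly with $[\res_i X,\res_i Y]^{E_i}_{**}$, rewrites each term via change-of-rings as $[X,(E\cotensor_{E_i}\FF_2)\otimes Y]^E_{**}$, and uses compactness of $X$ to pull the colimit inside, identifying $\dirlim_i (E\cotensor_{E_i}\FF_2)\otimes Y \cong Y$; you instead dualize to reduce to $\pi_{**}$, observe both sides are homological in $Z=X_i^\vee\otimes Y_i$, and check the statement on the unit, where it becomes $\colim_j\Ext_{E_j}^{**}(\FF_2,\FF_2)\cong\Ext_E^{**}(\FF_2,\FF_2)$ — a fact the paper itself records (in the proof of Corollary \ref{cor:Spc(Stable(E)^c)}) via the explicit polynomial descriptions. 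Your route buys a cleaner conceptual argument at the cost of the naturality bookkeeping you flag, and it yields injectivity of the comparison map, which the paper never needs but you exploit. For retracts, the paper descends to the cochain level: it arranges an honest equality $\res_i(\a)=\a_i$ so that $\a_i$ is a strict idempotent in $\Ch(E_i)$ and takes its kernel as a complex of injectives; your argument is the standard one for idempotents in a filtered $2$-colimit of idempotent-complete categories (lift the idempotent, use injectivity at a later stage to make it a genuine idempotent, split it in $\Stable(E_{j'})^c$). This is where your extra injectivity pays off and arguably gives a slicker proof, provided you do verify idempotent completeness of $\Stable(E_j)^c$ (which the paper supplies) and carry out the naturality checks you list.
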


\begin{proof}
    Consider the subcategory $\mathcal C$ of $\Stable(E)^c$ consisting of objects $X$ for which $X \cong \incl_i(X_i)$ for some $i \geq 0$ and some $X_i \in \Stable(E_i)^c$ and morphisms $X \xrightarrow{f} Y$ for which $f \cong \incl_i(f_i)$ for some $i \geq 0$ and some morphism $X_i \xrightarrow{f_i} Y_i$ in $\Stable(E_i)^c$. We will show that $\mathcal{C}$ is a thick subcategory; since $\mathcal{C}$ contains the unit object $S^0_E$, this will imply that $\mathcal{C} = \Stable(E)^c$, i.e., $\incl : \Stable(E_\bullet)^c \to \Stable(E)^c$ is surjective on morphisms.

    First, let us show that $\mathcal C$ is a full subcategory. Suppose $X \xrightarrow{f} Y$ is a morphism with $X, Y \in \mathcal C$. Choose $i_0$ such that $X \cong \incl_{i_0}(X_{i_0})$ and $Y \cong \incl_{i_0}(Y_{i_0})$. By Lemma \ref{lem:incl-of-res}, for $j \geq i \geq i_0$ we have 
        \[\incl_{i,j}(\res_i(X)) \cong \res_j(X), \quad \incl_{i,j}(\res_i(Y)) \cong \res_j(Y),\]
    so the inclusion functors induce a direct system with maps given by
        \[\left[\res_i(X), \res_i(Y)\right]^{E_i}_{**} \xrightarrow{\incl_{i,j}} \left[\res_j(X), \res_j(Y)\right]^{E_j}_{**}.\]
    Moreover, since
        \[\incl_i(\res_i(X)) \cong X, \quad \incl_i(\res_i(Y)) \cong Y\]
    for $i \geq i_0$ by Lemma \ref{lem:incl-of-res}, we get maps
        \[\left[\res_i(X), \res_i(Y)\right]^{E_i}_{**} \xrightarrow{\incl_i} [X, Y]^E_{**}\]
    which induce a map
        \[\dirlim_{i \geq i_0} \left[\res_i(X), \res_i(Y)\right]^{E_i}_{**} \to [X, Y]^E_{**}.\]
    Notice that the maps in the direct system can be identified as follows
        \begin{center}
            \begin{tikzcd}
                \left[\res_i(X), \res_i(Y)\right]^{E_i}_{**} \ar[r, "\incl_{i,j}"] \ar[d, "\cong"'] & \left[\res_j(X), \res_j(Y)\right]^{E_j}_{**} \ar[d, "\cong"] \\
                \left[X, E \cotensor_{E_i} \res_i(Y)\right]^E_{**} \ar[d, "\cong"'] & \left[X, E \cotensor_{E_j} \res_j(Y)\right]^E_{**} \ar[d, "\cong"] \\
                \left[X, (E \cotensor_{E_i} \FF_2) \otimes Y\right]^E_{**} \ar[r] & \left[X, (E \cotensor_{E_j} \FF_2) \otimes Y\right]^E_{**},
            \end{tikzcd}
        \end{center}
    where the bottom map is induced by the projection $E \cotensor_{E_i} \FF_2 \to E \cotensor_{E_j} \FF_2$. Then we also get the following identification
        \begin{center}
            \begin{tikzcd}
                \left[\res_i(X), \res_i(Y)\right]^{E_i}_{**} \ar[d, "\cong"'] \ar[r, "\incl_i"] & \left[X, Y\right]^E_{**}, \\
                \left[X, (E \cotensor_{E_i} \FF_2) \otimes Y\right]^E_{**} \ar[ru]
            \end{tikzcd}
        \end{center}
    where the diagonal map is induced by the projection $E \cotensor_{E_i} \FF_2 \to E \cotensor_E \FF_2 \cong \FF_2$. Now since $X$ is compact, we have
        \begin{center}
            \begin{tikzcd}
                \displaystyle\dirlim_{i \geq i_0} \left[\res_i(X), \res_i(Y)\right]^{E_i}_{**} \ar[dd] \ar[r, "\cong"] & \displaystyle\dirlim_{i \geq i_0} \left[X, (E \cotensor_{E_i} \FF_2) \otimes Y\right]^E_{**} \ar[d, "\cong"] \\
                & \left[X, \displaystyle\dirlim_{i \geq i_0} \left((E \cotensor_{E_i} \FF_2) \otimes Y\right)\right]^E_{**} \ar[d, "\cong"] \\
                \left[X, Y\right]^E_{**} & \left[X, \displaystyle\dirlim_{i \geq i_0} (E \cotensor_{E_i} \FF_2) \otimes Y\right]^E_{**} \ar[l, "\cong"].
            \end{tikzcd}
        \end{center}
    Therefore, since $f \in [X, Y]^E_{**}$, we have $f \cong \incl_{i_1}(f_{i_1})$ for some $i_1 \geq i_0$ and some morphism $\res_{i_1}(X) \xrightarrow{f_{i_1}} \res_{i_1}(Y)$ in $\Stable(E_{i_1})^c$. Thus, $\mathcal{C}$ is a full subcategory.

    Now suppose we have a cofiber sequence $X \xrightarrow{f} Y \to Z$ with $X, Y \in \mathcal C$. Since $\mathcal C$ is full, we know that $f \cong \incl_i(f_i)$ for some morphism $X_i \xrightarrow{f_i} Y_i$ in $\Stable(E_i)^c$. Let $Z_i$ be the cofiber of $X_i \xrightarrow{f_i} Y_i$.
    Since $\incl_i$ is exact, we get that
        \[\incl_i(Z_i) \cong \incl_i(\cone(f_i)) \cong \cone(\incl_i(f_i)) \cong \cone(f) \cong Z.\]

    Lastly, suppose that $Z = X \oplus Y \in \mathcal C$. Consider the cochain map $Z \xrightarrow{\a} Z$ given by $\a(x \oplus y) = 0 \oplus y$. In $\Ch(E)$, the kernel of this map is $X$. Now since $\mathcal C$ is a full subcategory of $\Stable(E)^c$, choose $i$ such that $\a \cong \incl_i(\a_i)$ for some morphism $Z_i \xrightarrow{\a_i} Z_i$ in $\Stable(E_i)^c$. Observe that $\res_i(\a) \cong \res_i(\incl_i(\a_i)) \cong \a_i$, hence we may replace $\a_i$ with $\res_i(\a)$ so that we actually have equality $\res_i(\a) = \a_i$. This guarantees that $\a_i$ is idempotent as an endomorphism of $\res_i(Z)$ in $\Ch(E_i)$, therefore the kernel $X_i = \ker(\a_i)$ in $\Ch(E_i)$ is a cochain complex of injectives. In particular, $X_i$ may also be viewed as an object in $\Stable(E_i)^c$. Since the functor $\incl_i : \Ch(E_i) \to \Ch(E)$ is exact, we get that $\incl_i(X_i) = \incl_i(\ker(\a_i)) = \ker(\incl_i(\a_i))$. Therefore, in $\Stable(E)^c$, we have $\incl_i(X_i) \cong \ker(\incl_i(\alpha_i)) \cong \ker(\a) \cong X$. Since we have a cofiber sequence $X \to Z \to Y$, by the previous paragraph we also have $Y \in \mathcal C$. This completes the proof.
\end{proof}

\begin{lem}\label{lem:detects isomorphisms}
    The pseudo-natural transformation
        \[\incl : \Stable(E_\bullet)^c \to \Stable(E)^c\]
    detects isomorphisms.
\end{lem}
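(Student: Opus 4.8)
The plan is to reduce an isomorphism $\incl_i(X_i)\cong\incl_i(Y_i)$ in $\Stable(E)^c$ to an isomorphism already visible at some finite stage $E_j$, using the same colimit-of-Hom-groups idea that drove the proof of Lemma \ref{lem:surjective on morphisms}, but now applied to the isomorphism and its inverse simultaneously. First I would fix $i$ and the given isomorphism $\varphi\colon\incl_i(X_i)\xrightarrow{\ \sim\ }\incl_i(Y_i)$ in $\Stable(E)^c$, with inverse $\psi$. Since the objects $\incl_i(X_i)$ and $\incl_i(Y_i)$ both lie in the subcategory $\mathcal C=\Stable(E)^c$ of Lemma \ref{lem:surjective on morphisms}, and $\mathcal C$ is full, I can apply the colimit identification established there: for $j\ge i$ the maps $\incl_{i,j}$ on Hom-groups assemble into
\[
\dirlim_{j\ge i}\bigl[\incl_{i,j}(X_i),\incl_{i,j}(Y_i)\bigr]^{E_j}_{**}\xrightarrow{\ \cong\ }[\incl_i(X_i),\incl_i(Y_i)]^E_{**},
\]
and symmetrically with $X_i$ and $Y_i$ swapped (here I am using Lemma \ref{lem:incl-of-res} to identify $\res_j(\incl_i(X_i))$ with $\incl_{i,j}(X_i)$, and the compactness of $\incl_i(X_i)$ to commute $[X,-]$ past the relevant directed colimit of $E\cotensor_{E_j}\FF_2$-twists, exactly as in the previous proof).

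Next I would use these colimit identifications to pull $\varphi$ and $\psi$ back to some finite stage. Choose $j_0\ge i$ and a morphism $\varphi_{j_0}\colon\incl_{i,j_0}(X_i)\to\incl_{i,j_0}(Y_i)$ in $\Stable(E_{j_0})^c$ with $\incl_{j_0}(\varphi_{j_0})\cong\varphi$; similarly choose $j_1\ge i$ and $\psi_{j_1}\colon\incl_{i,j_1}(Y_i)\to\incl_{i,j_1}(X_i)$ with $\incl_{j_1}(\psi_{j_1})\cong\psi$. Replacing both by their images under $\incl_{-,j}$ for $j=\max(j_0,j_1)$, I may assume they live at the same stage $j$, giving $\varphi_j$ and $\psi_j$ with $\incl_j(\varphi_j)\cong\varphi$ and $\incl_j(\psi_j)\cong\psi$. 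Now the two composites $\psi_j\circ\varphi_j\in[\incl_{i,j}(X_i),\incl_{i,j}(X_i)]^{E_j}_{**}$ and $\id_{\incl_{i,j}(X_i)}$ both map, under the colimit-injective map $\incl_j$ into $[\incl_i(X_i),\incl_i(X_i)]^E_{**}$, to $\psi\circ\varphi=\id$. Since the transition maps $\incl_{j,k}$ form a directed system whose colimit injects into (in fact is isomorphic to) the Hom-group over $E$, two elements with the same image in the colimit become equal at some finite stage: there is $k\ge j$ with $\incl_{j,k}(\psi_j\circ\varphi_j)=\incl_{j,k}(\id)=\id_{\incl_{i,k}(X_i)}$, i.e.\ $\incl_{j,k}(\psi_j)\circ\incl_{j,k}(\varphi_j)=\id$. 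Running the same argument on the other composite (possibly enlarging $k$ once more) gives $\incl_{j,k}(\varphi_j)\circ\incl_{j,k}(\psi_j)=\id$ as well, so $\incl_{j,k}(\varphi_j)$ is an isomorphism $\incl_{i,k}(X_i)\xrightarrow{\sim}\incl_{i,k}(Y_i)$ in $\Stable(E_k)^c$, which is exactly what detecting isomorphisms requires (with $j$ in the statement taken to be this $k$).

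The main obstacle is making sure the colimit identification of Hom-groups from the proof of Lemma \ref{lem:surjective on morphisms} genuinely applies here, and in particular that it gives not merely surjectivity but the full statement that $[\incl_i(X_i),\incl_i(Y_i)]^E_{**}$ is the directed colimit $\dirlim_{j}[\incl_{i,j}(X_i),\incl_{i,j}(Y_i)]^{E_j}_{**}$; it is this colimit description (an element dying in the colimit dies at a finite stage, and the map is injective as well as surjective) that underwrites the step where $\psi_j\circ\varphi_j$ and $\id$ must already agree at a finite stage. This hinges on $\incl_i(X_i)$ being compact together with the fact that $E\cotensor_{E_j}\FF_2\to E\cotensor_{E_k}\FF_2$ exhibits $E\cotensor_E\FF_2\cong\FF_2$ as the colimit — precisely the diagram chase carried out in the previous lemma — so once that is invoked cleanly, the remainder is the standard "isomorphisms are detected by finitely presented colimits" argument and is routine.
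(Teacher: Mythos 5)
Your argument is correct, but it takes a genuinely different (and considerably longer) route than the paper. The paper's proof is essentially one line: detecting isomorphisms is equivalent to detecting zero objects (a standard triangulated-category fact, cited from Gallauer), and zero objects are detected immediately --- indeed with $j=i$, no passage to a larger index needed --- because $\res_i \circ \incl_i \cong \id_{\Stable(E_i)}$ forces $X_i \cong \res_i(\incl_i(X_i)) \cong 0$ whenever $\incl_i(X_i)\cong 0$. You instead run the standard ``compact objects versus filtered colimits'' argument directly on the isomorphism and its inverse, lifting both to a common finite stage and using that the two composites agree with the identities in the colimit, hence at some finite stage. This works, but note two points. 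First, it leans on the \emph{injectivity} of the comparison map $\dirlim_j [\incl_{i,j}(X_i),\incl_{i,j}(Y_i)]^{E_j}_{**} \to [\incl_i(X_i),\incl_i(Y_i)]^E_{**}$, which is established in the \emph{proof} of Lemma \ref{lem:surjective on morphisms} (the composite around the square there is an isomorphism) but is not recorded in its statement, so you should cite the proof rather than the lemma. Second, your approach is more robust --- it would apply even without the splittings $\incl_i$ being sections of the $\res_i$ --- whereas the paper's proof exploits the retraction to get a much stronger conclusion (zero objects, and hence isomorphisms of the form $\incl_i(f_i)$, are detected already at stage $i$). Given that the retractions are available here, the paper's shortcut is the more economical choice.
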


\begin{proof}
    As mentioned in \cite{gal-filtered}, detecting isomorphisms is equivalent to detecting zero objects. This follows from $\res_i \circ \incl_i \cong \id_{\Stable(E_i)}$: if $X_i \in \Stable(E_i)^c$ with $\incl_i(X_i) \cong 0$, then
        \[X_i \cong \res_i(\incl_i(X_i)) \cong \res_i(0) \cong 0.\]
\end{proof}

The next result now follows immediately by Proposition 8.5 of \cite{gal-filtered}.

\begin{thm}\label{thm:E-continuity}
    The induced map
        \[\invlim_i \Spc(\incl_i) : \Spc(\Stable(E)^c) \to \invlim_{i} \Spc(\Stable(E_i)^c)\]
    is a homeomorphism.
\end{thm}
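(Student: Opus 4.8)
The plan is to deduce the statement directly from the continuity machinery for Balmer spectra developed by Gallauer in \cite{gal-filtered}. The indexing category $\ZZ_{\geq 0}$ is a directed poset, so the pseudo-functor $\Stable(E_\bullet)^c$ is $2$-filtered, and each term $\Stable(E_i)^c$ is an essentially small, idempotent-complete tt-category (Section 2). In this situation Proposition 8.5 of \cite{gal-filtered} asserts that whenever the cocone $\incl : \Stable(E_\bullet)^c \to \Stable(E)^c$ is surjective on morphisms and detects isomorphisms, the canonical comparison map $\invlim_i \Spc(\incl_i) : \Spc(\Stable(E)^c) \to \invlim_i \Spc(\Stable(E_i)^c)$ is a homeomorphism. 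Both hypotheses have just been verified --- surjectivity on morphisms in Lemma \ref{lem:surjective on morphisms} and detection of isomorphisms in Lemma \ref{lem:detects isomorphisms} --- so the theorem follows at once. The only remaining bookkeeping is that the functors $\incl_i$ together with the coherence isomorphisms genuinely assemble into a pseudo-natural transformation of the form required by \cite{gal-filtered}, which is guaranteed by the compatibilities $\incl_j \circ \incl_{i,j} \cong \incl_i$ and $\incl_{j,k} \circ \incl_{i,j} \cong \incl_{i,k}$ recorded above.

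For orientation, the mechanism behind Gallauer's proposition is that surjectivity on morphisms and detection of isomorphisms are precisely what is needed to identify $\Stable(E)^c$ with the $2$-colimit $\colim_i \Stable(E_i)^c$ in the $2$-category of tt-categories (up to idempotent completion), and $\Spc(-)$, being contravariant, converts such a filtered $2$-colimit into a cofiltered limit of spectral spaces. Concretely: injectivity of $\invlim_i \Spc(\incl_i)$ comes from the fact that every object and morphism of $\Stable(E)^c$ descends to some finite stage $E_i$, so a prime tt-ideal of $\Stable(E)^c$ is determined by the family of its preimages $\Spc(\incl_i)^{-1}$; surjectivity is a gluing statement for a compatible family of prime tt-ideals across the tower; and the identification of the topologies uses that Balmer supports are compatible with filtered colimits of this kind, together with the fact that both spaces are spectral and the map in question is spectral.

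Accordingly, I do not anticipate any obstacle in the theorem itself: the conceptual content of this section is entirely contained in Lemmas \ref{lem:incl-of-res}--\ref{lem:detects isomorphisms}, with the genuinely delicate step being the surjectivity-on-morphisms argument (the compactness of $X$ applied to the filtered colimit $\dirlim_i (E \cotensor_{E_i} \FF_2) \otimes Y$, and the splitting of idempotents at a finite stage), while Theorem \ref{thm:E-continuity} is a formal consequence once those are in hand.
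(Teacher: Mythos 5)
Your proof is correct and matches the paper exactly: the theorem is deduced by citing Proposition 8.5 of \cite{gal-filtered}, with the two hypotheses supplied by Lemmas \ref{lem:surjective on morphisms} and \ref{lem:detects isomorphisms}. The additional commentary on the mechanism behind Gallauer's result is accurate but not needed for the argument.
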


Using Hovey and Palmieri's result (Theorem \ref{thm:hovpal}) for $\Spc(\Stable(E_i)^c)$, we can now explicitly compute $\Spc(\Stable(E)^c)$.

\begin{cor}\label{cor:Spc(Stable(E)^c)}
    The comparison map
        \[\rho_E : \Spc(\Stable(E)^c) \to \Spec^h(\Ext_E^{**}(\FF_2, \FF_2))\]
    is a homeomorphism.
\end{cor}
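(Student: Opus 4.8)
The plan is to combine the continuity homeomorphism of Theorem~\ref{thm:E-continuity} with the finite-dimensional case (Theorem~\ref{thm:hovpal}) and then identify the resulting inverse limit of spectra with $\Spec^h(\Ext_E^{**}(\FF_2,\FF_2))$. First I would apply Theorem~\ref{thm:hovpal} to each finite-dimensional quotient $E_i$: the comparison map $\rho_{E_i}\colon \Spc(\Stable(E_i)^c) \to \Spec^h(\Ext_{E_i}^{**}(\FF_2,\FF_2))$ is a homeomorphism, and these are compatible as $i$ varies because the $\res_{j,i}$ are tt-functors and the comparison map is natural. Taking the inverse limit and combining with Theorem~\ref{thm:E-continuity} gives a homeomorphism
    \[\Spc(\Stable(E)^c) \cong \invlim_i \Spec^h(\Ext_{E_i}^{**}(\FF_2,\FF_2)).\]
So the content of the corollary reduces to the purely algebraic statement that the natural map $\Spec^h(\Ext_E^{**}(\FF_2,\FF_2)) \to \invlim_i \Spec^h(\Ext_{E_i}^{**}(\FF_2,\FF_2))$ is a homeomorphism, and that under this identification the algebraic inverse-limit map matches up with $\invlim_i \Spc(\incl_i)$ precomposed with $\rho_E$; the latter compatibility is a naturality diagram chase for the comparison map applied to the quotient maps $E \twoheadrightarrow E_i$.

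For the algebraic statement, the key input is the explicit structure of $\Ext^{**}$ for elementary Hopf algebras. Since $E$ is elementary, $\Ext_E^{**}(\FF_2,\FF_2)$ is a (bigraded) polynomial-type ring on the classes coming from the generators $\xi_{m+i}^{2^\ell}$ appearing in the profile of $E$, and $E = \colim_i E_i$ as Hopf algebras (the $E_i$ being the truncations to the first $i$ columns) induces $\Ext_E^{**}(\FF_2,\FF_2) = \invlim_i \Ext_{E_i}^{**}(\FF_2,\FF_2)$, where each transition map $\Ext_{E_j}^{**} \to \Ext_{E_i}^{**}$ is a surjection of graded rings killing the generators in columns $i+1,\dots,j$. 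One then checks that an inverse limit of graded rings along such surjections, where in each fixed bidegree the system stabilizes (only finitely many columns contribute to any given bidegree, by the connectivity/degree bounds $|\xi_n| = 2^n-1$), has homogeneous prime spectrum equal to the inverse limit of the homogeneous prime spectra. Concretely: a homogeneous prime of $\Ext_E^{**}$ is determined by which of the (countably many) polynomial generators it contains together with a prime of the residue field of the quotient, and this data is exactly a compatible system of homogeneous primes of the $\Ext_{E_i}^{**}$; the topologies match because the basic opens $U(f)$ for $f$ homogeneous are pulled back from some finite stage. This step is where I expect the real work to be — one must be careful that the inverse limit of spectral spaces along spectral maps is computed correctly and that the bigrading/invariance bookkeeping (the degree-by-degree stabilization) genuinely holds for the $E_i$.

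Finally, assembling the pieces: the square
    \[\begin{tikzcd}
        \Spc(\Stable(E)^c) \ar[r, "\invlim \Spc(\incl_i)"] \ar[d, "\rho_E"'] & \invlim_i \Spc(\Stable(E_i)^c) \ar[d, "\invlim \rho_{E_i}"] \\
        \Spec^h(\Ext_E^{**}(\FF_2,\FF_2)) \ar[r] & \invlim_i \Spec^h(\Ext_{E_i}^{**}(\FF_2,\FF_2))
    \end{tikzcd}\]
commutes by naturality of the comparison map, the top arrow is a homeomorphism by Theorem~\ref{thm:E-continuity}, the right arrow is a homeomorphism by Theorem~\ref{thm:hovpal}, and the bottom arrow is a homeomorphism by the algebraic argument above; hence $\rho_E$ is a homeomorphism. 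The main obstacle, as noted, is the algebraic identification of the spectrum of the inverse limit with the inverse limit of spectra, which requires exploiting the specific column-by-column structure of the elementary quotient Hopf algebras rather than any general nonsense about inverse limits of rings.
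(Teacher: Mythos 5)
Your overall strategy is exactly the paper's: the same naturality square for the comparison map, with Theorem~\ref{thm:E-continuity} giving the top (continuity) homeomorphism and Theorem~\ref{thm:hovpal} giving $\invlim_i\rho_{E_i}$, leaving only the identification of the bottom arrow $\Spec^h(\Ext_E^{**}(\FF_2,\FF_2))\to\invlim_i\Spec^h(\Ext_{E_i}^{**}(\FF_2,\FF_2))$.

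The one place you go astray is in setting up that identification. The inverse system $\invlim_i\Spc(\Stable(E_i)^c)$ has transition maps $\Spc(\incl_{i,j})$, which under the $\rho_{E_i}$ correspond to the maps $\Spec^h(\Ext_{E_j}^{**})\to\Spec^h(\Ext_{E_i}^{**})$ induced by the ring \emph{inclusions} $\Ext_{E_i}^{**}\hookrightarrow\Ext_{E_j}^{**}$ coming from the splittings $E_i\hookrightarrow E_j$ --- not by the surjections $\Ext_{E_j}^{**}\twoheadrightarrow\Ext_{E_i}^{**}$ you describe, which induce closed immersions $\Spec^h(\Ext_{E_i}^{**})\hookrightarrow\Spec^h(\Ext_{E_j}^{**})$ going the wrong way for this inverse system. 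Consequently your realization of $\Ext_E^{**}$ as $\invlim_i\Ext_{E_i}^{**}$ along surjections, and the degree-by-degree stabilization argument, do not match the diagram you drew; as written the square's commutativity is not established for the bottom map you define. With the correct transition maps the algebraic step is not where the ``real work'' lies: one has $\dirlim_i\Ext_{E_i}^{**}(\FF_2,\FF_2)\cong\Ext_E^{**}(\FF_2,\FF_2)$ along inclusions of polynomial rings, and $\Spec^h$ carries a filtered colimit of graded rings to the inverse limit of the spectra --- precisely the ``general nonsense about inverse limits'' you disavow, and this is all the paper uses. Your description of a homogeneous prime of $\Ext_E^{**}$ as a compatible system of primes of the $\Ext_{E_i}^{**}$ is correct once the compatibility is taken to be contraction along the inclusions, so the misstep is fixable, but the proof as written does not go through.
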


\begin{proof}
    By the naturality of comparison maps, we have a commutative diagram
        \begin{center}
            \begin{tikzcd}[column sep = large]
                \Spc(\Stable(E)^c) \ar[d, "\invlim_i \Spc(\incl_i)"'] \ar[r, "\rho_E"] & \Spec^h(\Ext_E^{**}(\FF_2, \FF_2)) \ar[d] \\
                \displaystyle\invlim_i \Spc(\Stable(E_i)^c) \ar[r, "\invlim_i \rho_{E_i}"] & \displaystyle\invlim_i \Spec^h(\Ext_{E_i}^{**}(\FF_2, \FF_2)).
            \end{tikzcd}
        \end{center}
    Note that $\invlim_i \Spc(\incl_i)$ is a homeomorphism by Theorem \ref{thm:E-continuity}, and $\invlim_i \rho_{E_i}$ is a homeomorphism by Theorem \ref{thm:hovpal}. Moreover, note that
        \[\Ext_{E_i}^{**}(\FF_2, \FF_2) \cong \FF_2[h_{ts} : \xi_t^{2^s} \neq 0 \in E_i], \quad \Ext_E^{**}(\FF_2, \FF_2) \cong \FF_2[h_{ts} : \xi_t^{2^s} \neq 0 \in E],\]
    and all of the maps induced by the various inclusions are just inclusions of polynomial rings. Therefore, the map $\dirlim_i \Ext_{E_i}^{**}(\FF_2, \FF_2) \to \Ext_E^{**}(\FF_2, \FF_2)$ is an isomorphism and hence the vertical map on the right-hand-side of the diagram is also a homeomorphism. Thus, $\rho_E : \Spc(\Stable(E)^c) \to \Spec^h(\Ext_E^{**}(\FF_2, \FF_2))$ is a homeomorphism as well.
\end{proof}

\begin{rem}\label{rem:E-supp}
    It will be necessary in the next section for us to understand what supports look like under the homeomorphism of Corollary \ref{cor:Spc(Stable(E)^c)}.
    By Proposition 2.10 of \cite{lau-stacks}, for any $X \in \Stable(E)^c$ we have
        \[\rho_E(\supp_E(X)) = \{\p \in \Spec^h(\Ext_E^{**}(\FF_2, \FF_2)) : X_\p \neq 0\},\]
    where $\supp_E$ denotes the universal support theory on $\Stable(E)^c$.
\end{rem}


\section{The quotient Hopf algebra $D$}

Recall the quotient Hopf algebra
    \[D = A/(\xi_1^2, \xi_2^4, \xi_3^8, \ldots, \xi_n^{2^n}, \ldots).\]
Let $\mathcal{Q}$ denote the category of elementary quotient Hopf algebras of $A$ with morphisms given by quotient maps. For each $E \in \mathcal{Q}$, the quotient map $D \twoheadrightarrow E$ induces a stable morphism
    \[\res_{D,E} : \Stable(D) \to \Stable(E).\]
These functors give rise to a map
    \[\colim_{E \in \mathcal{Q}} \Spc(\res_{D,E}) : \colim_{E \in \mathcal{Q}} \Spc(\Stable(E)^c) \to \Spc(\Stable(D)^c).\]
Define another map
    \[\varphi_D : \Spec^h(\Ext_D^{**}(\FF_2, \FF_2)) \to \Spc(\Stable(D)^c)\]
via the following diagram
    \begin{center}
        \begin{tikzcd}[column sep = huge]
            \colim_{E \in \mathcal{Q}} \Spc(\Stable(E)^c) \ar[d, "\cong"'] \ar[rr, "\colim_E \Spc(\res_{D,E})"] & & \Spc(\Stable(D)^c), \\
            \colim_{E \in \mathcal{Q}} \Spec^h(\Ext_E^{**}(\FF_2, \FF_2)) \ar[d, "\cong"'] \\
            \Spec^h(\Ext_D^{**}(\FF_2, \FF_2)) \ar[rruu, "\varphi_D"']
        \end{tikzcd}
    \end{center}
where the first vertical map is a homeomorphism by Corollary \ref{cor:Spc(Stable(E)^c)}, and the second is a homeomorphism by Proposition 4.2 of \cite{pal-quillen}.

The main goal of this section is to show that $\varphi_D$ is a homeomorphism.

\begin{prop}\label{prop:D-surj}
    The map
        \[\colim_{E \in \mathcal{Q}} \Spc(\res_{D,E}) : \colim_{E \in \mathcal{Q}} \Spc(\Stable(E)^c) \to \Spc(\Stable(D)^c)\]
    is surjective, therefore the map
        \[\varphi_D : \Spec^h(\Ext_D^{**}(\FF_2, \FF_2)) \to \Spc(\Stable(D)^c)\]
    is also surjective.
\end{prop}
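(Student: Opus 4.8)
The plan is to show surjectivity of $\colim_{E \in \mathcal{Q}} \Spc(\res_{D,E})$ by producing, for each prime tt-ideal $\mathcal P \subsetneq \Stable(D)^c$, an elementary quotient $E \in \mathcal Q$ and a prime tt-ideal $\mathcal Q' \subsetneq \Stable(E)^c$ with $\Spc(\res_{D,E})(\mathcal Q') = \mathcal P$; equivalently, $\res_{D,E}^{-1}(\mathcal Q') = \mathcal P$, i.e.\ an object $X \in \Stable(D)^c$ lies in $\mathcal P$ iff $\res_{D,E}(X) \in \mathcal Q'$. The natural candidate for $\mathcal Q'$ is simply the image prime: given $\mathcal P$, first note that $\res_{D,E}^{-1}(\mathcal Q')$ is always a prime tt-ideal, so the content is to find a single $E$ for which $\mathcal P$ is of this form. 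The key structural fact to invoke is that $\Stable(D)^c$ is generated as a thick subcategory by the unit $S^0_D$, together with the observation that every compact object $X \in \Stable(D)^c$ is actually defined over some elementary quotient, or more precisely is detected by the family $\{\res_{D,E}\}_{E\in\mathcal Q}$; this is exactly the algebraic shadow of the $F$-isomorphism $\Ext_D^{**}(\FF_2,\FF_2) \to \lim_{E\in\mathcal Q}\Ext_E^{**}(\FF_2,\FF_2)$ recorded in Section~2.

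Concretely, I would argue as follows. Using the comparison map $\rho_D : \Spc(\Stable(D)^c) \to \Spec^h(\Ext_D^{**}(\FF_2,\FF_2))$ and the fact (Proposition 4.2 of \cite{pal-quillen}, invoked in the construction of $\varphi_D$) that the colimit of the $\Spec^h(\Ext_E^{**}(\FF_2,\FF_2))$ surjects onto $\Spec^h(\Ext_D^{**}(\FF_2,\FF_2))$, pick a prime $\p \in \Spec^h(\Ext_D^{**}(\FF_2,\FF_2))$ lying under $\mathcal P$ under $\rho_D$, and pick $E \in \mathcal Q$ together with $\q \in \Spec^h(\Ext_E^{**}(\FF_2,\FF_2))$ mapping to $\p$. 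Set $\mathcal Q' = \varphi_E(\q) \subsetneq \Stable(E)^c$, which is a prime tt-ideal by Corollary~\ref{cor:Spc(Stable(E)^c)}. The claim is then $\res_{D,E}^{-1}(\mathcal Q') = \mathcal P$. One containment is formal: $\res_{D,E}^{-1}(\mathcal Q')$ is a prime tt-ideal and, chasing through $\rho_D$ and naturality of comparison maps (as in the diagram defining $\varphi_D$), it maps to $\p$ under $\rho_D$; since distinct prime tt-ideals can map to the same $\p$, this alone is not enough — but the family $\{\res_{D,E}\}$ is jointly conservative on $\Stable(D)^c$ in the appropriate sense, which pins down the fiber.

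I expect the main obstacle to be precisely this last point: showing that the prime tt-ideal $\res_{D,E}^{-1}(\varphi_E(\q))$ actually equals $\mathcal P$ and not merely some other prime over $\p$. The cleanest route is to avoid it entirely by instead arguing directly that $\varphi_D$ is surjective, which is what the statement ultimately wants: an arbitrary prime tt-ideal $\mathcal P$ contains all $X$ with $\supp_D(X)$ disjoint from a suitable Thomason-closed neighborhood of $\p$, and using Remark~\ref{rem:E-supp} together with the fact that supports in $\Stable(D)^c$ are determined by supports in the $\Stable(E)^c$ (again the $F$-isomorphism, which induces a homeomorphism on $\Spec^h$ and hence identifies the support data), one checks that $\mathcal P = \varphi_D(\p)$. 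Then the surjectivity of $\colim_E \Spc(\res_{D,E})$ follows from commutativity of the defining triangle for $\varphi_D$, since $\varphi_D$ factors through it. The technical heart is the claim that the universal support theory on $\Stable(D)^c$ is pulled back from the elementary quotients — this needs the fact that $\Stable(D)^c = \thick\langle S^0_D\rangle$ so that it suffices to know supports of objects built from the unit, combined with continuity-type arguments analogous to those of Section~4.
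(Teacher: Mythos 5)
There is a genuine gap, and it sits exactly where you suspected. The paper's proof is a two-step application of a nilpotence theorem: the family $\{\res_{D,E}\}_{E \in \mathcal{Q}}$ jointly detects nilpotence (Palmieri's Theorem 5.1.6 of \cite{pal-stable}, transported from $A$ to $D$), and Theorem 1.3 of \cite{bchs-surjectivity} converts joint nilpotence detection into joint surjectivity of $\bigsqcup_E \Spc(\res_{D,E})$, hence of the colimit map, hence of $\varphi_D$. Your proposal never invokes nilpotence detection, and this is the essential input; it cannot be replaced by the ingredients you list. In your first route, the step ``the family $\{\res_{D,E}\}$ is jointly conservative \ldots which pins down the fiber'' does not work: joint conservativity (detection of zero objects) is strictly weaker than joint nilpotence detection and does not in general imply surjectivity on Balmer spectra, nor does it determine the fiber of $\Spc(\res_{D,E})$ over a given prime. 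Knowing that $\res_{D,E}^{-1}(\varphi_E(\q))$ lies over $\p$ under $\rho_D$ is indeed not enough, since at this stage nothing rules out several primes of $\Spc(\Stable(D)^c)$ over the same $\p$.

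Your proposed ``cleanest route'' --- arguing directly that $\mathcal{P} = \varphi_D(\p)$ for $\p = \rho_D(\mathcal{P})$ --- is circular. The assertion $\varphi_D \circ \rho_D = \id$ amounts to saying that every prime tt-ideal of $\Stable(D)^c$ is of the form $\{X : X_\p = 0\}$, which is essentially the full classification Theorem~\ref{thm:Spc(Stable(D)^c)} that Proposition~\ref{prop:D-surj} is an input to; the paper only establishes $\rho_D \circ \varphi_D = \id$ \emph{after} proving $\varphi_D$ is a homeomorphism. Concretely, to check $\mathcal{P} = \varphi_D(\p)$ you would need the implication $X \notin \mathcal{P} \Rightarrow X_\p \neq 0$, i.e.\ that the support datum $\sigma_D$ detects membership in arbitrary prime tt-ideals --- and that is precisely the content of the nilpotence theorem (it is also how the second half of Lemma~\ref{lem:quillen-supports} is proved, via detection of ring objects). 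The appeal to $\Stable(D)^c = \thick\langle S^0_D\rangle$ and to continuity arguments as in Section~4 does not supply this: $D$ is not a filtered colimit of its finite-dimensional quotients in the relevant sense, which is why the paper routes the argument through the elementary quotients and the nilpotence theorem in the first place.
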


\begin{proof}
    We have the following commutative diagram
        \begin{center}
            \begin{tikzcd}[column sep = large]
                \displaystyle\bigsqcup_{E \in \mathcal{Q}} \Spc(\Stable(E)^c) \ar[d] \ar[rrd, "\bigsqcup_E \Spc(\res_{D,E})"] & &  \\
            \displaystyle\colim_{E \in \mathcal{Q}} \Spc(\Stable(E)^c) \ar[rr, "\colim_E \Spc(\res_{D,E})"'] & & \Spc(\Stable(D)^c).
            \end{tikzcd}
        \end{center}
    The family $\{\res_{D,E} : \Stable(D) \to \Stable(E)\}_{E \in \mathcal{Q}}$ of stable morphisms jointly detects nilpotence by Theorem 5.1.6 of \cite{pal-stable} (replacing $A$ with $D$, which can be done as in \cite{pal-nilpotence-i}). Hence, by Theorem 1.3 of \cite{bchs-surjectivity}, the map $\bigsqcup_E \Spc(\res_{D,E})$ is surjective. Thus, $\colim_E \Spc(\res_{D,E})$ is also surjective.
\end{proof}

Let us abuse notation and also write $\res_{D,E} : \Ext_D^{**}(\FF_2, \FF_2) \to \Ext_E^{**}(\FF_2, \FF_2)$ for the induced map on Ext. We will write
    \[\res_{D,E}^* : \Spec^h(\Ext_E^{**}(\FF_2, \FF_2)) \to \Spec^h(\Ext_D^{**}(\FF_2, \FF_2))\]
for the induced map on $\Spec^h$.

The following result describes how algebraic localizations behave under restriction from $D$ to an elementary quotient Hopf algebra. Hovey and Palmieri proved a general result of this sort in Corollary 4.12 of \cite{hov-pal-stable}, but their proof relied on the assumption that the given Hopf algebras are finite-dimensional.

\begin{prop}\label{prop:res-of-loc}
    Suppose $X \in \Stable(D)^c$, $E \in \mathcal{Q}$, and $\p \in \Spec^h(\Ext_D^{**}(\FF_2, \FF_2))$. Then
        \[\res_{D,E}(X_\p) \cong \begin{cases} \res_{D,E}(X)_\q & \text{if } \res_{D,E}^*(\q) = \p, \\ 0 & \text{if } \p \notin \im(\res_{D,E}^*). \end{cases}\]
\end{prop}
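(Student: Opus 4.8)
The plan is to reduce the statement to a localization computation at the level of the graded endomorphism rings, using the fact that $\res_{D,E}$ is a tt-functor which interacts well with the algebraic localizations $(-)_\p$. Recall first that for a ring object $R$ in $\Stable(B)^c$ and a homogeneous prime $\p$ of $\pi^B_{**}(R)$, the localization $X_\p$ is constructed via a Bousfield localization (or, more concretely, via the module category structure) so that $\pi^B_{**}(X_\p) \cong \pi^B_{**}(X)_\p$. I would begin by recording the precise relationship between the coefficient rings: the map $\res_{D,E} : \Ext_D^{**}(\FF_2,\FF_2) \to \Ext_E^{**}(\FF_2,\FF_2)$ induced on endomorphism rings of the units, together with the fact (from Corollary \ref{cor:Spc(Stable(E)^c)} and the discussion around Proposition 4.2 of \cite{pal-quillen}) that the induced map $\res_{D,E}^*$ on $\Spec^h$ is exactly the comparison-map-compatible one. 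The two cases in the statement then correspond precisely to the two possibilities for the fiber of $\res_{D,E}^*$ over $\p$: either $\p$ is in the image, in which case the fiber is a single point $\q$ (this requires knowing the fibers are singletons, which follows from the explicit polynomial-ring description $\Ext_E^{**} \cong \FF_2[h_{ts} : \xi_t^{2^s} \neq 0 \in E]$ and $\Ext_D^{**} \to \Ext_E^{**}$ being a surjection of polynomial rings modulo nilpotents — more carefully, $\res_{D,E}$ sends the polynomial generators $h_{ts}$ of $\Ext_D^{**}$ either to the corresponding generator of $\Ext_E^{**}$ or to zero, so it is a quotient map up to $F$-isomorphism and the fibers of $\res_{D,E}^*$ are indeed single points or empty), or $\p$ is not in the image.

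For the first case, the key is that $\res_{D,E}$ is monoidal and the localization $X_\p$ is built by inverting (in the appropriate derived sense) the elements of $\Ext_D^{**}(\FF_2,\FF_2)$ outside $\p$. Applying the tt-functor $\res_{D,E}$ carries this construction to the construction inverting the images of those elements in $\Ext_E^{**}(\FF_2,\FF_2)$. Since $\res_{D,E}^*(\q) = \p$ means exactly that the multiplicative set $\Ext_D^{**} \ssm \p$ maps into $\Ext_E^{**} \ssm \q$ and generates it up to radical (using $F$-isomorphism-type control, i.e. the complement of $\q$ is the radical of the ideal generated by the image of the complement of $\p$), the two localizations agree: $\res_{D,E}(X_\p) \cong \res_{D,E}(X)_\q$. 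I would phrase this via the universal property of $(-)_\q$-localization — $\res_{D,E}(X_\p)$ is $\q$-local because every element of $\Ext_E^{**} \ssm \q$ acts invertibly on it (some power is in the image of $\Ext_D^{**}\ssm\p$ up to nilpotents, and nilpotents act as zero on the relevant graded pieces by connectivity, or one argues with the localization being idempotent), and the localization map $X \to X_\p$ becomes, after applying $\res_{D,E}$, a $\Ext_E^{**}\ssm\q$-equivalence.

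For the second case, $\p \notin \im(\res_{D,E}^*)$ means there is a homogeneous element $f \in \Ext_D^{**}(\FF_2,\FF_2)$ with $f \in \p$ but $\res_{D,E}(f)$ a unit (equivalently, not in any prime of $\Ext_E^{**}$, which for a graded connected ring means $\res_{D,E}(f) \neq 0$ in degree zero up to the grading — more precisely, $\res_{D,E}(f)$ lies in no prime, hence is invertible or nilpotent-complement; the cleanest statement is that the ideal $\res_{D,E}(\p)\Ext_E^{**}$ is the unit ideal). Then on one hand $f$ acts invertibly on $\res_{D,E}(X_\p)$ since $\res_{D,E}(f)$ is a unit, but on the other hand $f \in \p$ acts locally nilpotently — indeed topologically nilpotently, as $X_\p$ is $\p$-local — on $X_\p$, hence on $\res_{D,E}(X_\p)$. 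An endomorphism that is simultaneously invertible and (topologically) nilpotent on an object forces that object to be zero. I would make the "invertible and nilpotent implies zero" step precise by noting $f$ invertible means $X_\p \xrightarrow{f} X_\p$ is an equivalence after applying $\res_{D,E}$, so $\res_{D,E}(\cone(f|_{X_\p})) \cong 0$; but $f$ acting nilpotently (after some finite stage in the localization tower) makes $\cone(f|_{X_\p})$ split off $X_\p$ up to a shift in a way that detects it, giving $\res_{D,E}(X_\p) = 0$.

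The main obstacle I anticipate is the first case: controlling the fibers of $\res_{D,E}^*$ and making the "localizations agree" argument fully rigorous in the presence of the $F$-isomorphism rather than a genuine isomorphism — one must be careful that inverting the image of $\Ext_D^{**}\ssm\p$ in $\Ext_E^{**}$ really does produce the $\q$-localization and not something smaller, which ultimately rests on the explicit polynomial description of these Ext rings and the fact that $\res_{D,E}$ is, on generators, either the identity or zero. The case $\p \notin \im(\res_{D,E}^*)$ is comparatively soft, relying only on the incompatibility of invertibility and nilpotence. I would also want to double-check that the algebraic localization functor $(-)_\p$ on $\Stable(D)^c$ commutes with the tt-functor $\res_{D,E}$ up to the expected base-change, which is where Hovey–Palmieri needed finite-dimensionality; the fix here is presumably that we only need this for $X$ compact and we have the explicit module-theoretic model for localization, so compactness plus the monoidality of $\res_{D,E}$ suffices.
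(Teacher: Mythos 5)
Your overall strategy --- identify $\res_{D,E}(X_\p)$ with the localization of $\res_{D,E}(X)$ at the multiplicative set $\res_{D,E}\big(\Ext_D^{**}(\FF_2,\FF_2) \ssm \p\big)$, then analyze the two cases using the $F$-isomorphism and the polynomial description of $\Ext_E^{**}(\FF_2,\FF_2)$ --- is exactly the paper's strategy, and your first case is essentially the paper's argument. (The paper makes the commutation of $\res_{D,E}$ with $(-)_\p$ precise by exhibiting $X_\p$ as a minimal weak colimit and checking, via Proposition 2.2.2 of \cite{hps-axiomatic}, that $\res_{D,E}$, being a left adjoint for which $\pi^E_{**}\circ\res_{D,E}$ is a homology theory, carries it to the minimal weak colimit computing the localization of $\res_{D,E}(X)$; your appeal to compactness and monoidality is pointing at the same mechanism.)

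The second case, however, has a genuine gap. You characterize $\p \notin \im(\res_{D,E}^*)$ by the existence of a homogeneous $f \in \p$ with $\res_{D,E}(f)$ a unit, or by $\res_{D,E}(\p)\,\Ext_E^{**}(\FF_2,\FF_2)$ being the unit ideal. In a connected (bi)graded ring such as $\FF_2[h_{ts} : \xi_t^{2^s} \neq 0 \in E]$ the only homogeneous units are nonzero scalars, while every homogeneous element of $\p$ has positive degree; so no such $f$ exists, $\res_{D,E}(\p)$ always generates a proper ideal, and your criterion would wrongly force every prime to lie in the image. Moreover, the claim that $f \in \p$ acts (topologically) nilpotently on $X_\p$ is false: localizing at $\p$ inverts the complement of $\p$ and imposes no nilpotence on elements of $\p$ (compare: $x$ acts injectively, not nilpotently, on $k[x,y]_{(x)}$). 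The fact the paper actually uses has the roles reversed and depends on the specific structure of $\res_{D,E}$: up to $F$-isomorphism it is the quotient of $\lim_{E'} \Ext_{E'}^{**}(\FF_2,\FF_2)$ killing the generators $h_{ts}$ with $\xi_t^{2^s} = 0$ in $E$, so the image of $\res_{D,E}^*$ is the closed set of primes containing that kernel, and $\p \notin \im(\res_{D,E}^*)$ produces an element $t \in \Ext_D^{**}(\FF_2,\FF_2) \ssm \p$ with $\res_{D,E}(t) = 0$. The localization of $\res_{D,E}(X)$ then inverts $0$ and hence vanishes --- no nilpotence-versus-invertibility argument is needed. Your second case should be rewritten along these lines.
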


\begin{proof}
    Let $T = \Ext_D^{**}(\FF_2, \FF_2) \ssm \p$ and let $\mathcal I$ denote the category whose objects are the elements of $T$ and whose morphisms are given by multiplication by elements of $T$. Consider the diagram $F : \mathcal{I} \to \Stable(D)^c$ which sends every object to (a suspension of) $X$ and each morphism corresponding to $t \in T \subseteq \pi^D_{**}(S^0_D)$ to the induced self-map on $X$. It is not too hard to see that $X_\p$ is the minimal weak colimit (in the sense of \cite{hps-axiomatic}) of $F$. Since $\res_{D,E}$ is a left adjoint, it preserves weak colimits. Hence, $\res_{D,E}(X_\p)$ is a weak colimit of the diagram $\res_{D,E} \circ F$. To see that $\res_{D,E}(X_\p)$ is indeed the minimal weak colimit of $\res_{D,E} \circ F$, by Proposition 2.2.2 of \cite{hps-axiomatic} we must show that the induced map
        \[\colim_\mathcal{I} \pi_{**}^E(\res_{D,E}(X)) \to \pi_{**}^E(\res_{D,E}(X_\p))\]
    is an isomorphism.
    Since $\pi^E_{**} \circ \res_{D,E}$ is a homology theory, this follows immediately from the definition of minimal weak colimit applied to $X_\p$. Therefore, we may compute $\res_{D,E}(X_\p)$ by computing the minimal weak colimit of the diagram $\res_{D,E} \circ F$, which is the algebraic localization of $\res_{D,E}(X)$ obtained by inverting the elements of the multiplicatively closed set $\res_{D,E}(T)$.

    First we consider the case that $\p \notin \im(\res_{D,E}^*)$. We have the following commutative diagram
        \begin{center}
            \begin{tikzcd}
                \Ext_D^{**}(\FF_2, \FF_2) \ar[r, "\res_{D,E}"] \ar[d, "\cong_F"'] & \Ext_E^{**}(\FF_2, \FF_2), \\
                \displaystyle\lim_{E' \in \mathcal{Q}} \Ext_{E'}^{**}(\FF_2, \FF_2) \ar[ru]
            \end{tikzcd}
        \end{center}
    where the vertical map is an $F$-isomorphism by Proposition 4.2 of \cite{pal-quillen}. Moreover, under the identifications
        \[\lim_{E' \in \mathcal{Q}} \Ext_{E'}^{**}(\FF_2, \FF_2) \cong \FF_2[h_{ts} : s < t]/(h_{ts}h_{vu} : u \geq t)\]
    and
        \[\Ext_E^{**}(\FF_2, \FF_2) \cong \FF_2[h_{ts} : \xi_t^{2^s} \neq 0 \in E]\]
    (the first of which comes from Theorem 1.3 of \cite{pal-quillen}), the diagonal map becomes the apparent quotient map which kills all of the $h_{ts}$'s for which $\xi_t^{2^s} \mapsto 0$ in $E$. Since the vertical $F$-isomorphism induces a homeomorphism on $\Spec^h$, consider the homogeneous prime ideal $\p'$ in $\lim_{E' \in \mathcal{Q}} \Ext_{E'}^{**}(\FF_2, \FF_2)$ corresponding to $\p$. Then $\p \notin \im(\res_{D,E}^*)$ means that $\p'$ does not contain the kernel of the quotient map, i.e., there is some $h_{ts}$ in the kernel of the quotient map with $h_{ts} \in \big(\lim_{E' \in \mathcal{Q}} \Ext_{E'}^{**}(\FF_2, \FF_2)\big) \ssm \p'$. Then for some $n$, there is an element $x \in \Ext_D^{**}(\FF_2, \FF_2)$ which maps to $h_{ts}^{2^n}$ under the $F$-isomorphism, therefore $x \in \Ext_D^{**}(\FF_2, \FF_2) \ssm \p$ and $\res_{D,E}(x) = 0$. But then the minimal weak colimit of the diagram $\res_{D,E} \circ F$ is an algebraic localization which inverts zero, thus $\res_{D,E}(X_\p) = 0$.
    
    Now we consider the case that $\p = \res_{D,E}^*(\q)$ for some $\q \in \Spec^h(\Ext_E^{**}(\FF_2, \FF_2))$. We have
        \[T = \Ext_D^{**}(\FF_2, \FF_2) \ssm \res_{D,E}^*(\q) = \res_{D,E}^{-1}(\Ext_E^{**}(\FF_2, \FF_2) \ssm \q),\]
    so $\res_{D,E}(T) \subseteq \Ext_E^{**}(\FF_2, \FF_2) \ssm \q$. Now suppose $y \in \Ext_E^{**}(\FF_2, \FF_2) \ssm \q$. Recalling the commutative diagram from above, since the diagonal map is surjective and the vertical map is an $F$-isomorphism, $y^{2^n} = \res_{D,E}(x)$ for some $x \in \Ext_D^{**}(\FF_2, \FF_2)$. Since $x \in \res_{D,E}^{-1}(\Ext_E^{**}(\FF_2, \FF_2) \ssm \q) = T$, we have $y^{2^n} \in \res_{D,E}(T)$. Therefore, inverting $\res_{D,E}(T)$ is equivalent to inverting $\Ext_E^{**}(\FF_2, \FF_2) \ssm \q$, thus $\res_{D,E}(X_\p) \cong \res_{D,E}(X)_\q$.
\end{proof}

We now define notions of support for $\Stable(D)^c$ and $\Stable(E)^c$ for $E \in \mathcal{Q}$.

\begin{defn}
    Given $X \in \Stable(D)^c$, define
        \begin{align*}
            \sigma_D(X) &= \left\{\p \in \Spec^h(\Ext_D^{**}(\FF_2, \FF_2)) : X_\p \neq 0\right\}.
        \end{align*}
    Similarly, for each $E \in \mathcal{Q}$, given $X \in \Stable(E)^c$, define
        \begin{align*}
            \sigma_E(X) &= \left\{\p \in \Spec^h(\Ext_E^{**}(\FF_2, \FF_2)) : X_\p \neq 0\right\} \\
            &= V\left(\ann_{\Ext_E^{**}(\FF_2, \FF_2)}\left(\pi^E_{**}(X)\right)\right).
        \end{align*}
\end{defn}

\begin{rem}
    Notice that for $E \in \mathcal{Q}$, the ring
        \[\pi^E_{**}(S^0_E) \cong \Ext_E^{**}(\FF_2, \FF_2) \cong \FF_2[h_{ts} : \xi_t^{2^s} \neq 0 \in E]\]
    is coherent, hence $\pi^E_{**}(X)$ is a finitely generated module over $\Ext_E^{**}(\FF_2, \FF_2)$ for $X \in \Stable(A)^c$ (e.g., by the methods in Section 1 of \cite{conner-smith-complex}). This justifies the second equality in our definition for $\sigma_E(X)$.
\end{rem}

The following lemma provides a sort of Quillen stratification for $\sigma_D$ in terms of the $\sigma_E$'s, which allows us to relate it to the universal support theory for $\Stable(D)^c$.

\begin{lem}\label{lem:quillen-supports}
    For any $X \in \Stable(D)^c$,
        \[\varphi_D^{-1}(\supp_D(X)) = \bigcup_{E \in \mathcal{Q}} \res_{D,E}^*(\sigma_E(\res_{D,E}(X))) = \sigma_D(X).\]
\end{lem}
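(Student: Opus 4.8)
The plan is to prove the string of equalities by establishing two containments among three sets and using surjectivity results already in hand. Write $U = \varphi_D^{-1}(\supp_D(X))$, $V = \bigcup_{E} \res_{D,E}^*(\sigma_E(\res_{D,E}(X)))$, and $W = \sigma_D(X)$. I would first show $V = W$, which is essentially a formal consequence of Proposition \ref{prop:res-of-loc}: given $\p \in \Spec^h(\Ext_D^{**}(\FF_2,\FF_2))$, if $\p = \res_{D,E}^*(\q)$ for some $E$ and $\q$, then $\res_{D,E}(X_\p) \cong \res_{D,E}(X)_\q$, so $X_\p \neq 0$ if $\res_{D,E}(X)_\q \neq 0$; conversely, if $X_\p \neq 0$ then (since the family $\{\res_{D,E}\}_{E\in\mathcal Q}$ jointly detects nilpotence, hence jointly detects nonzero objects of $\Stable(D)^c$ — this is the content used in Proposition \ref{prop:D-surj}) there is some $E$ with $\res_{D,E}(X_\p) \neq 0$, which by the second case of Proposition \ref{prop:res-of-loc} forces $\p \in \im(\res_{D,E}^*)$, say $\p = \res_{D,E}^*(\q)$, and then $\res_{D,E}(X)_\q \cong \res_{D,E}(X_\p) \neq 0$ so $\q \in \sigma_E(\res_{D,E}(X))$ and $\p \in V$. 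This gives $W \subseteq V$; the reverse inclusion $V \subseteq W$ is the first half of the same argument. So $V = W = \sigma_D(X)$.

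Next I would identify $U$ with $W$. Unwinding the definition of $\varphi_D$ through the homeomorphisms of Corollary \ref{cor:Spc(Stable(E)^c)} and Proposition 4.2 of \cite{pal-quillen}, a point $\p \in \Spec^h(\Ext_D^{**}(\FF_2,\FF_2))$ corresponds under the colimit identification to a point of some $\Spc(\Stable(E)^c)$, which via $\rho_E^{-1} = \varphi_E$ is the prime tt-ideal $\{Y \in \Stable(E)^c : Y_\q = 0\}$ for the corresponding $\q$ with $\res_{D,E}^*(\q) = \p$; then $\varphi_D(\p) = \Spc(\res_{D,E})(\varphi_E(\q)) = (\res_{D,E})^{-1}(\varphi_E(\q)) = \{X \in \Stable(D)^c : \res_{D,E}(X)_\q = 0\}$. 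Thus $\p \in \varphi_D^{-1}(\supp_D(X))$ iff $X \notin \varphi_D(\p)$ iff $\res_{D,E}(X)_\q \neq 0$ iff (by Proposition \ref{prop:res-of-loc}) $X_\p \neq 0$, i.e. $\p \in \sigma_D(X) = W$. Here I should double-check that the description of $\varphi_D(\p)$ is independent of the choice of $E$ and $\q$ lying over $\p$, which follows from $\varphi_D$ being well-defined (it is defined via the colimit diagram) together with Proposition \ref{prop:res-of-loc}, since both cases of that proposition pin down $\res_{D,E}(X_\p)$ intrinsically in terms of $\p$.

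The step I expect to be the main obstacle is the careful bookkeeping in identifying $\varphi_D(\p)$ as a prime tt-ideal of $\Stable(D)^c$ — in particular, tracing a point $\p$ backwards through the colimit of the $\Spc(\Stable(E)^c)$ and confirming that the resulting prime tt-ideal $\{X : \res_{D,E}(X)_\q = 0\}$ does not depend on which $(E,\q)$ over $\p$ we picked, and that $\Spc(\res_{D,E})(\varphi_E(\q))$ really is this set (this is just the definition of the induced map on spectra as preimage under the tt-functor, but one must confirm $\varphi_E(\q)$ is indeed $\{Y : Y_\q = 0\}$, which is exactly the identification $\varphi_{E_i}$ established in the proof of Theorem \ref{thm:hovpal} transported along Corollary \ref{cor:Spc(Stable(E)^c)}). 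Once that identification is secure, everything else reduces to Proposition \ref{prop:res-of-loc} and the joint nilpotence detection already invoked in Proposition \ref{prop:D-surj}, and the three sets collapse to one.
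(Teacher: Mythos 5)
Your overall structure matches the paper's: the containment $\bigcup_E \res_{D,E}^*(\sigma_E(\res_{D,E}(X))) \subseteq \sigma_D(X)$ and the identification of $\varphi_D^{-1}(\supp_D(X))$ both come down to Proposition \ref{prop:res-of-loc} plus bookkeeping through the colimit and the identification $\rho_E(\supp_E(-))=\sigma_E(-)$, exactly as in the paper, and those parts of your argument are fine. The genuine gap is in the hard containment $\sigma_D(X) \subseteq \bigcup_E \res_{D,E}^*(\sigma_E(\res_{D,E}(X)))$, where you assert that joint nilpotence detection implies that the family $\{\res_{D,E}\}$ detects arbitrary nonzero objects, attributing this to ``the content used in Proposition \ref{prop:D-surj}.'' Neither claim holds as stated. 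Proposition \ref{prop:D-surj} uses a different consequence of nilpotence detection (joint surjectivity on Balmer spectra via \cite{bchs-surjectivity}), which is a statement about prime tt-ideals of compacts and says nothing about whether a given object has a nonzero restriction. And nilpotence detection does not formally yield detection of arbitrary nonzero objects: what it yields directly is detection of nonzero \emph{ring} objects (a nonzero ring object has a non-nilpotent unit, hence non-nilpotent image under some $\res_{D,E}$). The localization $X_\p$ is neither compact nor a ring object, so an extra step is required.

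The missing step is the paper's retract argument: if $\res_{D,E}(X_\p)=0$ for all $E\in\mathcal{Q}$, then $\res_{D,E}\bigl((X\otimes DX)_\p\bigr)=\res_{D,E}\bigl(X_\p\otimes (DX)_\p\bigr)=0$ for all $E$; since $(X\otimes DX)_\p$ is a ring object and the family detects ring objects, $(X\otimes DX)_\p=0$; and since $X_\p$ is a retract of $(X\otimes DX\otimes X)_\p=(X\otimes DX)_\p\otimes X_\p$, it follows that $X_\p=0$. With that inserted your proof closes up. Note also that in your final chain of equivalences, the implication $X_\p\neq 0\Rightarrow\res_{D,E}(X)_\q\neq 0$ for the \emph{chosen} pair $(E,\q)$ over $\p$ needs the same input: the ring-object argument only produces \emph{some} $E'$ with $\res_{D,E'}(X_\p)\neq 0$, which you must then transfer back to the chosen pair using Proposition \ref{prop:res-of-loc} and the well-definedness of $\varphi_D$, as you partially anticipate in your last paragraph.
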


\begin{proof}
    For the first equality, note that for each $E \in \mathcal{Q}$, the preimage of $\supp_D(X)$ under $\res_{D,E}^* : \Spc(\Stable(E)^c) \to \Spc(\Stable(D)^c)$ is $\supp_E(\res_{D,E}(X))$. Tracing through the homeomorphisms
        \[\colim_{E \in \mathcal{Q}} \Spc(\Stable(E)^c) \cong \colim_{E \in \mathcal{Q}} \Spec^h(\Ext_E^{**}(\FF_2, \FF_2)) \cong \Spec^h(\Ext_D^{**}(\FF_2, \FF_2)),\]
    we see that
        \begin{align*}
            \varphi_D^{-1}(\supp_D(X)) &= \bigcup_{E \in \mathcal{Q}} \res_{D,E}^*(\rho_E(\supp_E(\res_{D,E}(X)))) \\
            &= \bigcup_{E \in \mathcal{Q}} \res_{D,E}^*(\sigma_E(\res_{D,E}(X))),
        \end{align*}
    where we have used that $\rho_E \circ \supp_E = \sigma_E$ on $\Stable(E)^c$ by Remark \ref{rem:E-supp}.

    Now we show the second equality. Suppose $\p \in \bigcup_{E \in \mathcal{Q}} \res_{D,E}^*(\sigma_E(\res_{D,E}(X)))$. Then $\p = \res_{D,E}^*(\q)$ for some $E \in \mathcal{Q}$ and $\q \in \Spec^h(\Ext_E^{**}(\FF_2, \FF_2))$ such that $\res_{D,E}(X)_\q \neq 0$. By Lemma \ref{prop:res-of-loc},
        \[\res_{D,E}(X_\p) = \res_{D,E}(X)_\q \neq 0,\]
    hence $X_\p \neq 0$, i.e., $\p \in \sigma_D(X)$.

    Conversely, suppose $\p \notin \bigcup_{E \in \mathcal{Q}} \res_{D,E}^*(\sigma_E(\res_{D,E}(X)))$. Then $\res_{D,E}(X_\p) = 0$ and hence $\res_{D,E}(X_\p \otimes (DX)_\p) = 0$ for all $E \in \mathcal{Q}$, where $DX$ denotes the Spanier-Whitehead dual of $X$. By Theorem 5.1.6 of \cite{pal-stable} applied to $D$ instead of $A$, the family $\{\res_{D,E} : \Stable(D) \to \Stable(E)\}_{E \in \mathcal{Q}}$ detects nilpotence and therefore detects ring objects. Since $X_\p \otimes (DX)_\p = (X \otimes DX)_\p$ is a ring object, we have $X_\p \otimes (DX)_\p = 0$. But $X_\p$ is a retract of $(X \otimes DX \otimes X)_\p = X_\p \otimes (DX)_\p \otimes X_\p = 0$, hence $X_\p = 0$, i.e., $\p \notin \sigma_D(X)$.
\end{proof}

Now we show that every quasi-compact open subset of $\Spec^h(\Ext_D^{**}(\FF_2, \FF_2))$ arises as the $\sigma_D$-support of a compact object.

\begin{lem}\label{lem:D-koszul}
    For any finitely generated homogeneous ideal $I$ in $\Ext_D^{**}(\FF_2, \FF_2)$, there exists an object $X \in \Stable(D)^c$ with $\sigma_D(X) = V(I)$.
\end{lem}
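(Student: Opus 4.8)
The plan is to build $X$ as a tensor product of cofibers of self-maps of $S^0_D$, one for each generator of $I$. First I would write $I = (f_1, \ldots, f_n)$ with each $f_j \in \Ext_D^{**}(\FF_2, \FF_2) \cong \pi^D_{**}(S^0_D)$ homogeneous, and set $X_j = \cone(f_j \colon \Sigma^{|f_j|} S^0_D \to S^0_D)$, where $f_j$ is viewed as a self-map of the unit object. Each $X_j$ is compact since $\Stable(D)^c$ is thick and contains $S^0_D$. Then I would take $X = X_1 \otimes \cdots \otimes X_n$, which is again compact.

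The key computation is to identify $\sigma_D(X)$. For a single cofiber $X_j$, localizing at $\p$ is exact, so $(X_j)_\p = \cone(f_j \colon \Sigma^{|f_j|} (S^0_D)_\p \to (S^0_D)_\p)$, and $\pi^D_{**}((S^0_D)_\p) = \Ext_D^{**}(\FF_2,\FF_2)_\p$ since localization commutes with the homology theory $\pi^D_{**}$ on compact objects (this is the algebraic localization used throughout Section 5). Thus $(X_j)_\p = 0$ if and only if $f_j$ acts invertibly on $\Ext_D^{**}(\FF_2,\FF_2)_\p$, i.e. if and only if $f_j \notin \p$; so $\sigma_D(X_j) = V(f_j)$. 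For the tensor product, I would argue that $X_\p = (X_1)_\p \otimes \cdots \otimes (X_n)_\p$, and that this vanishes if and only if some factor vanishes. One direction is clear; for the other, I would use that the family $\{\res_{D,E}\}_{E \in \mathcal{Q}}$ detects nilpotence (Theorem 5.1.6 of \cite{pal-stable} applied to $D$), exactly as in the proof of Lemma \ref{lem:quillen-supports}: applying $\res_{D,E}$ to the tensor product of localizations gives a tensor product of localizations over the coherent ring $\Ext_E^{**}(\FF_2,\FF_2)$, where the support of a tensor product is the intersection of supports by the finite-generation of homotopy; then $X \otimes DX$ is a ring object detected by the $\res_{D,E}$'s, and a retract argument (as in Lemma \ref{lem:quillen-supports}) pushes the vanishing back down to $\Stable(D)$. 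This yields $\sigma_D(X) = \bigcap_{j=1}^n V(f_j) = V(I)$.

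The main obstacle I anticipate is precisely this last point — proving that the $\sigma_D$-support of a tensor product is the intersection of the supports, since $\Ext_D^{**}(\FF_2,\FF_2)$ itself is not known to be Noetherian or even coherent, so the naive commutative-algebra argument is unavailable at the level of $D$. The resolution is to transport the question to the elementary quotients $E \in \mathcal{Q}$, where the relevant Ext rings \emph{are} coherent and homotopy groups of compact objects are finitely generated, combine with Proposition \ref{prop:res-of-loc} and the nilpotence-detection of the family $\{\res_{D,E}\}$, and then invoke Lemma \ref{lem:quillen-supports}'s identity $\sigma_D = \varphi_D^{-1} \circ \supp_D$ together with the fact that $\supp_D$ is a genuine tt-support (so it does satisfy $\supp_D(X \otimes Y) = \supp_D(X) \cap \supp_D(Y)$ formally). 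In fact, once one knows $\sigma_D(X_j) = V(f_j)$ and $\sigma_D = \varphi_D^{-1}(\supp_D(-))$, the tensor-product formula for $\supp_D$ immediately gives $\sigma_D(X) = \bigcap_j \varphi_D^{-1}(\supp_D(X_j)) = \bigcap_j V(f_j) = V(I)$, so the cleanest route is to establish the single-generator case by hand and let the universal support theory do the rest.
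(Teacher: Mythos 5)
Your construction is identical to the paper's: the object is the Koszul-type complex $S^0_D/f_1 \otimes \cdots \otimes S^0_D/f_k$. Where you diverge is in how its $\sigma_D$-support is computed. The paper runs an induction on $j$, tensoring the cofiber sequence $S^0_D \xrightarrow{f_j} S^0_D \to S^0_D/f_j$ with $X_{j-1}$ and reading off the long exact sequence in $\pi^D_{**}$ after localizing at $\p$, obtaining $\sigma_D(X_j) = \sigma_D(X_{j-1}) \cap V((f_j))$ without ever leaving the world of $\sigma_D$ and homotopy groups. You instead establish only the single-generator case $\sigma_D(S^0_D/f_j) = V((f_j))$ by hand and then let the identity $\sigma_D = \varphi_D^{-1}(\supp_D(-))$ from Lemma \ref{lem:quillen-supports}, together with the tensor formula $\supp_D(a \otimes b) = \supp_D(a) \cap \supp_D(b)$ for the universal support, do the rest. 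Both routes are valid (Lemma \ref{lem:quillen-supports} is proved before this lemma and does not depend on it, and the tensor formula is one of Balmer's axioms for the universal support), and your ``cleanest route'' is arguably tidier: for the unit object the relevant module is the local ring $\Ext_D^{**}(\FF_2,\FF_2)_\p$ itself, so the cokernel of multiplication by $f_j$ is visibly nonzero when $f_j \in \p$, whereas the paper's inductive step has to argue about $f_j$ acting on the possibly non-finitely-generated module $\pi^D_{**}((X_{j-1})_\p)$. What you pay is a dependence on the nilpotence-theoretic input already packaged into Lemma \ref{lem:quillen-supports}, while the paper's induction is self-contained exactness-of-localization bookkeeping. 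Your first, more roundabout suggestion (redoing the restriction-to-$\mathcal{Q}$ and ring-object retract argument directly for the tensor product) is superfluous given the second; note, though, that your instinct there was sound, since in a general tt-category a tensor product of nonzero objects can vanish, so the claim ``$X_\p = 0$ iff some $(X_j)_\p = 0$'' genuinely requires either the detour through $\supp_D$ or the nilpotence/retract trick, as you recognized.
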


\begin{proof}
    Let us write $I = (f_1, \ldots, f_k)$ with each $f_i$ homogeneous. Consider $X_0 = S^0_D$ and
        \[X_j = S^0_D/f_1 \otimes \cdots \otimes S^0_D/f_j\]
    for $j = 1, \ldots, k$. We will prove by induction on $j$ that $\sigma_D(X_j) = V((f_1, \ldots, f_j))$. Our base case is $\sigma_D(X_0) = \sigma_D(S^0_D) = V((0))$. Now assume that $\sigma_D(X_{j-1}) = V((f_1, \ldots, f_{j-1}))$. Tensoring the cofiber sequence
        \[S^0_D \xrightarrow{f_j} S^0_D \to S^0_D/f_j\]
    with $X_{j-1}$ gives
        \[X_{j-1} \xrightarrow{f_j} X_{j-1} \to X_j.\]
    We then see from the long exact sequence
        \begin{center}
            \begin{tikzcd}
                \cdots \ar[r] & \pi^D_{**}(X_{j-1}) \ar[r, "f_j"] & \pi^D_{**}(X_{j-1}) \ar[r] & \pi^D_{**}(X_j) \ar[r] & \cdots
            \end{tikzcd}
        \end{center}
    that $\p \in \sigma_D(X_j)$ if and only if $\p \in \sigma_D(X_{j-1})$ and $f_j$ is not a unit in $\Ext_D^{**}(\FF_2, \FF_2)_\p$, i.e., $f_j \in \p$. Hence,
        \begin{align*}
            \sigma_D(X_j) &= \sigma_D(X_{j-1}) \cap V((f_j)) \\
            &= V((f_1, \ldots, f_{j-1})) \cap V((f_j)) \\
            &= V((f_1, \ldots, f_{j-1}, f_j)),
        \end{align*}
    completing the induction step and finishing the proof by taking $X = X_k$.
\end{proof}

Next we prove a more general tt-geometry result, which we will use here for $\Stable(D)^c$ and in Section 6 for $\Stable(A)^c$.

\begin{prop}\label{prop:tt-result}
    Suppose $\mathcal{C}$ is an essentially small tt-category and $X$ is a spectral space with a continuous map $\varphi : X \to \Spc(\mathcal{C})$. If for every closed subset $Z \subseteq X$ with quasi-compact complement there exists an object $a \in \mathcal{C}$ with $\varphi^{-1}(\supp(a)) = Z$, then $\varphi$ is injective. If $\varphi$ is also surjective, then it is a homeomorphism.
\end{prop}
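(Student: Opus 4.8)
The plan is to prove injectivity by a standard Balmer-style argument using supports, and then invoke the general principle that a continuous bijection which pulls back supports ``faithfully'' is a homeomorphism. Concretely, suppose $\varphi(x_1) = \varphi(x_2) =: \mathcal{P}$ for $x_1, x_2 \in X$. Since $X$ is a spectral space, if $x_1 \neq x_2$ then (after possibly swapping the two points) there is a closed subset $Z \subseteq X$ with quasi-compact complement such that $x_1 \in Z$ but $x_2 \notin Z$ — here one uses that the quasi-compact opens form a basis of a spectral space, so the two distinct points can be separated by a quasi-compact open, and the $T_0$ axiom lets us arrange which point lies in the closed complement. By hypothesis, choose $a \in \mathcal{C}$ with $\varphi^{-1}(\supp(a)) = Z$. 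Then $x_1 \in \varphi^{-1}(\supp(a))$ and $x_2 \notin \varphi^{-1}(\supp(a))$, which says $\mathcal{P} = \varphi(x_1) \in \supp(a)$ while $\mathcal{P} = \varphi(x_2) \notin \supp(a)$ — a contradiction. Hence $\varphi$ is injective.

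For the second assertion, assume $\varphi$ is also surjective, so it is a continuous bijection of spectral spaces; I want to show the inverse is continuous, equivalently that $\varphi$ is a closed map, equivalently (since both sides are spectral and we may check on a basis of closed sets with quasi-compact complement) that $\varphi(Z)$ is closed for every such $Z \subseteq X$. Given $Z$ with quasi-compact complement, pick $a \in \mathcal{C}$ with $\varphi^{-1}(\supp(a)) = Z$. Since $\supp(a)$ is a closed subset of $\Spc(\mathcal{C})$ and $\varphi$ is a bijection, $\varphi(Z) = \varphi(\varphi^{-1}(\supp(a))) = \supp(a)$ is closed. As the sets $Z$ with quasi-compact complement form a basis for the closed sets of the spectral space $X$ (their complements being the quasi-compact opens) and $\varphi$ is a bijection commuting with arbitrary intersections of such, $\varphi$ carries closed sets to closed sets, so $\varphi$ is a closed continuous bijection, hence a homeomorphism.

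The main obstacle I anticipate is purely the point-set topology bookkeeping in the injectivity step: one must be careful that $X$ being spectral guarantees enough quasi-compact opens to separate points with the right ``orientation'' relative to $Z$ — i.e., that given $x_1 \neq x_2$ one really can find a quasi-compact open containing exactly one of them, and that its complement is then a closed set of the required form. This is exactly the content of the spectral-space axioms ($T_0$ plus a basis of quasi-compact opens closed under finite intersection), but it deserves to be spelled out. Everything else — that $\supp(a)$ is closed, that $\varphi^{-1}$ of closed is closed, that a continuous closed bijection is a homeomorphism — is formal. I would also note explicitly that this proposition is tailored so that the hypothesis ``for every closed $Z$ with quasi-compact complement there is $a$ with $\varphi^{-1}(\supp(a)) = Z$'' is verified for $\Stable(D)^c$ by Lemmas \ref{lem:quillen-supports} and \ref{lem:D-koszul} (closed sets with quasi-compact complement in $\Spec^h(\Ext_D^{**}(\FF_2,\FF_2))$ being exactly the $V(I)$ for finitely generated $I$), which is why the statement is isolated in this generality.
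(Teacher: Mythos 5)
Your proposal is correct and follows essentially the same argument as the paper: injectivity by separating two points with a quasi-compact open and applying the hypothesis to its closed complement, and then using bijectivity to identify $\varphi(Z)$ with $\supp(a)$ for basic (co-)quasi-compact sets. The only cosmetic difference is that you verify $\varphi$ is a closed map while the paper dually verifies it is open; both reduce to the same computation.
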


\begin{proof}
    Suppose $x \neq y$ in $X$. Since $X$ is spectral, without loss of generality there is a quasi-compact open subset $U \subseteq X$ such that $x \in U$ and $y \notin U$. Consider the complement $Z = X \ssm U$, which has $x \notin Z$ and $y \in Z$. By our hypothesis, there is an object $a \in \mathcal{C}$ with $\varphi^{-1}(\supp(a)) = Z$. But then $\varphi(x) \notin \supp(a)$ while $\varphi(y) \in \supp(a)$, so $\varphi(x) \neq \varphi(y)$. Thus, $\varphi$ is injective.

    Now suppose that $\varphi$ is also surjective. To show that $\varphi$ is a homeomorphism, it suffices to show that $\varphi$ is open. Since $X$ is spectral, its quasi-compact open subsets form a basis for the topology, hence it further suffices to show that $\varphi(U)$ is open for any quasi-compact open subset $U \subseteq X$. To this end, suppose $U \subseteq X$ is a quasi-compact open subset, and consider its complement $Z = X \ssm U$. By our hypothesis, there is an object $a \in \mathcal{C}$ with $\varphi^{-1}(\supp(a)) = Z$. Since $\varphi$ is bijective,
        \[\varphi(U) = \varphi(X \ssm Z) = \Spc(\mathcal{C}) \ssm \varphi(Z) = \Spc(\mathcal{C}) \ssm \supp(a),\]
    which is open in $\Spc(\mathcal{C})$. This finishes the proof.
\end{proof}

We now prove the main result of this section, which is a computation of the Balmer spectrum of $\Stable(D)^c$.

\begin{thm}\label{thm:Spc(Stable(D)^c)}
    The map
        \[\varphi_D : \Spec^h(\Ext_D^{**}(\FF_2, \FF_2)) \to \Spc(\Stable(D)^c)\]
    is a homeomorphism, hence the map
        \[\colim_{E \in \mathcal{Q}} \Spc(\res_{D,E}) : \colim_{E \in \mathcal{Q}} \Spc(\Stable(E)^c) \to \Spc(\Stable(D)^c)\]
    is also a homeomorphism.
\end{thm}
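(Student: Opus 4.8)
The plan is to apply Proposition~\ref{prop:tt-result} with $\mathcal{C} = \Stable(D)^c$, the spectral space $X = \Spec^h(\Ext_D^{**}(\FF_2,\FF_2))$, and the continuous map $\varphi = \varphi_D$. We already have surjectivity of $\varphi_D$ from Proposition~\ref{prop:D-surj}, so by Proposition~\ref{prop:tt-result} it suffices to verify the hypothesis: for every closed subset $Z \subseteq X$ with quasi-compact complement, there is an object $X \in \Stable(D)^c$ with $\varphi_D^{-1}(\supp_D(X)) = Z$.

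First I would identify the closed subsets with quasi-compact complement. Since $\Spec^h(\Ext_D^{**}(\FF_2,\FF_2))$ is a spectral space (indeed the homogeneous spectrum of a graded ring), a closed subset $Z$ has quasi-compact complement precisely when $Z = V(I)$ for a finitely generated homogeneous ideal $I$. This is where I would invoke Lemma~\ref{lem:D-koszul}: for such an $I = (f_1,\dots,f_k)$ there is a compact object $X = S^0_D/f_1 \otimes \cdots \otimes S^0_D/f_k$ with $\sigma_D(X) = V(I) = Z$. Then Lemma~\ref{lem:quillen-supports} gives $\varphi_D^{-1}(\supp_D(X)) = \sigma_D(X) = Z$, which is exactly what the hypothesis of Proposition~\ref{prop:tt-result} requires. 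Hence $\varphi_D$ is injective, and combined with surjectivity it is a homeomorphism.

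For the final clause, once $\varphi_D$ is known to be a homeomorphism, I would simply trace back through the defining diagram of $\varphi_D$: it was constructed so that $\varphi_D$ equals the composite of $\colim_{E \in \mathcal{Q}} \Spc(\res_{D,E})$ with the homeomorphism $\colim_{E \in \mathcal{Q}} \Spc(\Stable(E)^c) \cong \colim_{E \in \mathcal{Q}} \Spec^h(\Ext_E^{**}(\FF_2,\FF_2)) \cong \Spec^h(\Ext_D^{**}(\FF_2,\FF_2))$ coming from Corollary~\ref{cor:Spc(Stable(E)^c)} and Proposition~4.2 of \cite{pal-quillen}. Since $\varphi_D$ and these two maps are all homeomorphisms, so is $\colim_{E \in \mathcal{Q}} \Spc(\res_{D,E})$.

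The content of this proof is essentially bookkeeping: all the real work has been front-loaded into Lemma~\ref{lem:D-koszul} (realizing Thomason-closed subsets as supports of Koszul-type compact objects), Lemma~\ref{lem:quillen-supports} (the Quillen-stratification identification $\varphi_D^{-1}(\supp_D) = \sigma_D$, which itself rests on Proposition~\ref{prop:res-of-loc} and Palmieri's nilpotence detection), and Proposition~\ref{prop:D-surj} (surjectivity via \cite{bchs-surjectivity}). So I do not anticipate a genuine obstacle here; the only point requiring a little care is confirming that the closed subsets with quasi-compact complement in $\Spec^h$ of this particular (non-Noetherian, infinitely generated) graded ring are exactly the $V(I)$ with $I$ finitely generated — which holds because $\Spec^h$ of any graded ring is spectral and its quasi-compact opens are the basic opens $D(f_1) \cup \cdots \cup D(f_k)$, finite unions of which are again quasi-compact.
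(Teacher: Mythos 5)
Your proposal matches the paper's proof essentially verbatim: both invoke Proposition \ref{prop:tt-result} with surjectivity from Proposition \ref{prop:D-surj} and realize each Thomason-closed $Z = V(I)$ as $\varphi_D^{-1}(\supp_D(X)) = \sigma_D(X)$ via Lemmas \ref{lem:quillen-supports} and \ref{lem:D-koszul}. Your extra remarks on identifying the quasi-compact opens and on deducing the second homeomorphism from the defining diagram of $\varphi_D$ are correct and just spell out what the paper leaves implicit.
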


\begin{proof}
    If $Z \subseteq \Spec^h(\Ext_D^{**}(\FF_2, \FF_2))$ is a closed subset with quasi-compact complement, then we have
    $Z = V(I)$ for some finitely generated homogeneous ideal $I$ in $\Ext_D^{**}(\FF_2, \FF_2)$.
    By Lemmas \ref{lem:quillen-supports} and \ref{lem:D-koszul} there exists an object $X \in \Stable(D)^c$ with
        \[\varphi_D^{-1}(\supp_D(X)) = \sigma_D(X) = V(I) = Z.\]
    Moreover, $\varphi_D$ is surjective by Proposition \ref{prop:D-surj}. Thus, by Proposition \ref{prop:tt-result}, $\varphi_D$ is a homeomorphism.
\end{proof}

Note that the second homeomorphism of Theorem \ref{thm:Spc(Stable(D)^c)} can be seen as a sort of tt-categorification of Quillen stratification for $D$. By Corollary \ref{cor:Spc(Stable(E)^c)} and the naturality of comparison maps, we can conclude that the comparison map for $\Stable(D)^c$ is also a homeomorphism. In fact, we have the following result.

\begin{cor}
    The comparison map
        \[\rho_D : \Spc(\Stable(D)^c) \to \Spec^h(\Ext_D^{**}(\FF_2, \FF_2))\]
    is a homeomorphism which is inverse to $\varphi_D$.
\end{cor}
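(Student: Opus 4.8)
The plan is to argue exactly as in the closing computation of the proof of Theorem~\ref{thm:hovpal}: first establish the identity $\rho_D \circ \varphi_D = \id$ on $\Spec^h(\Ext_D^{**}(\FF_2,\FF_2))$ by a direct calculation, and then invoke Theorem~\ref{thm:Spc(Stable(D)^c)}, which already tells us that $\varphi_D$ is a homeomorphism. A left inverse of a bijection is a two-sided inverse, and the inverse of a homeomorphism is a homeomorphism, so the corollary follows at once.

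The first step is to describe the prime tt-ideal $\varphi_D(\p)$ concretely. Lemma~\ref{lem:quillen-supports} gives $\varphi_D^{-1}(\supp_D(X)) = \sigma_D(X)$ for every $X \in \Stable(D)^c$, i.e. $\varphi_D(\p) \in \supp_D(X)$ if and only if $X_\p \neq 0$. Since $\supp_D(X) = \{\mathcal{P} : X \notin \mathcal{P}\}$, this means $X \in \varphi_D(\p)$ if and only if $X_\p = 0$, so
\[
\varphi_D(\p) = \{X \in \Stable(D)^c : X_\p = 0\}.
\]
Then, by the definition of the (multigraded) comparison map,
\[
\rho_D(\varphi_D(\p)) = \{f \in \Ext_D^{**}(\FF_2,\FF_2) : \cone(f) \notin \varphi_D(\p)\} = \{f : \cone(f)_\p \neq 0\}.
\]
Localizing the cofiber sequence $S^0_D \xrightarrow{f} S^0_D \to \cone(f)$ at $\p$ and inspecting the resulting long exact sequence in $\pi^D_{**}$ (just as in the proof of Lemma~\ref{lem:D-koszul}), one sees that $\cone(f)_\p = 0$ precisely when $f$ acts invertibly on $\pi^D_{**}((S^0_D)_\p) \cong \Ext_D^{**}(\FF_2,\FF_2)_\p$, which happens precisely when $f \notin \p$. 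Hence $\rho_D(\varphi_D(\p)) = \p$, completing the argument.

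The only point demanding a little care — and the closest thing to an obstacle, though it is entirely routine — is this localization computation: one needs that $(-)_\p$ on $\Stable(D)$ realizes the algebraic localization, i.e. $\pi^D_{**}(X_\p) \cong \pi^D_{**}(X)_\p$ (which is precisely how $X_\p$ was constructed), and that in the multigraded local ring $\Ext_D^{**}(\FF_2,\FF_2)_\p$ a homogeneous element is a unit exactly when it lies outside $\p$. Both facts are standard and were already used implicitly in Section~5. As an alternative route, one could instead deduce that $\rho_D$ is a homeomorphism from naturality of comparison maps applied to the system $\{D \twoheadrightarrow E\}_{E \in \mathcal{Q}}$ together with Corollary~\ref{cor:Spc(Stable(E)^c)}, and then separately verify that it inverts $\varphi_D$ on points; but the direct computation above is cleaner and self-contained.
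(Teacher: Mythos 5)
Your proposal is correct and follows exactly the paper's argument: the paper's proof of this corollary simply says it suffices to show $\rho_D \circ \varphi_D = \id$ via the same computation as at the end of the proof of Theorem~\ref{thm:hovpal}, which is precisely the calculation you carry out (with the identification of $\varphi_D(\p)$ via Lemma~\ref{lem:quillen-supports} correctly supplied). No further comment is needed.
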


\begin{proof}
    It suffices to show that $\rho_D \circ \varphi_D = \id$, which follows from a similar computation to that given at the end of the proof of Theorem \ref{thm:hovpal}.
\end{proof}

\begin{rem}\label{rem:D-supp}
    Again by Proposition 2.10 of \cite{lau-stacks}, for any $X \in \Stable(D)^c$ we have
        \[\sigma_D(X) = \rho_D(\supp_D(X)),\]
    where $\supp_D$ denotes the universal support theory on $\Stable(D)^c$.
\end{rem}


\section{The dual Steenrod algebra $A$}

In this section we compute $\Spc(\Stable(A)^c)$, proving our main theorem. We will start by constructing a map from our candidate space $\Specinv(\Ext_D^{**}(\FF_2, \FF_2))$ to $\Spc(\Stable(A)^c)$, and then we will proceed to show that this map is a homeomorphism.

Recall that $\Specinv(\Ext_D^{**}(\FF_2, \FF_2))$ denotes the subspace of $\Spec^h(\Ext_D^{**}(\FF_2, \FF_2))$ consisting of the invariant prime ideals of the Hopf algebroid
    \[\left(\pi^A_{**}(HD), \pi^A_{**}(HD \otimes HD)\right) \cong \left(\Ext_D^{**}(\FF_2, \FF_2), \Ext_D^{**}(\FF_2, \FF_2) \otimes (A \cotensor_D \FF_2)\right).\]
Given a homogeneous ideal $I$, we write
    \[V^\textup{inv}(I) = V(I) \cap \Specinv(\Ext_D^{**}(\FF_2, \FF_2))\]
for the closed subset consisting of all invariant prime ideals containing $I$. We also write $I^*$ to denote the largest invariant ideal in $\Ext_D^{**}(\FF_2, \FF_2)$ contained in $I$. See Appendix A for more details.

The following lemma verifies that if $\p$ is an invariant prime ideal in $\Ext_D^{**}(\FF_2, \FF_2)$, then $\p^*$ is also prime. In other words, we have a well-defined map
    \begin{align*}
        \Spec^h(\Ext_D^{**}(\FF_2, \FF_2)) &\to \Specinv(\Ext_D^{**}(\FF_2, \FF_2)) \\
        \p &\mapsto \p^*.
    \end{align*}

\begin{lem}
    If $\p \in \Spec^h(\Ext_D^{**}(\FF_2, \FF_2))$, then $\p^* \in \Specinv(\Ext_D^{**}(\FF_2, \FF_2))$.
\end{lem}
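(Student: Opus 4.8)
The plan is to show that $\p^*$, the largest invariant ideal contained in the prime ideal $\p$, is itself prime (it is invariant by definition, so only primality needs checking). The standard trick for this kind of statement is to produce a multiplicatively closed set that $\p^*$ is maximal with respect to missing. Concretely, let $T = \Ext_D^{**}(\FF_2, \FF_2) \ssm \p$, which is multiplicatively closed since $\p$ is prime. The key observation should be that $\p^*$ is the largest \emph{invariant} ideal disjoint from $T$: indeed any invariant ideal $J$ with $J \cap T = \varnothing$ satisfies $J \subseteq \p$, hence $J \subseteq \p^*$ by maximality of $\p^*$ among invariant ideals inside $\p$; conversely $\p^* \subseteq \p$ is disjoint from $T$. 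So $\p^*$ is maximal among invariant ideals in the complement of the multiplicatively closed set $T$.

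The next step is the routine-but-essential lemma that an invariant ideal which is maximal with respect to being disjoint from a multiplicatively closed set is prime. I would prove this by contradiction in the usual Krull style, but I must be careful to stay inside the world of invariant ideals: suppose $ab \in \p^*$ but $a \notin \p^*$ and $b \notin \p^*$. Here I would want to replace ``the ideal generated by $a$'' with ``the smallest invariant ideal containing $a$''—call it $(a)^{\textup{inv}}$, the invariant hull—so that $\p^* + (a)^{\textup{inv}}$ and $\p^* + (b)^{\textup{inv}}$ are invariant ideals strictly larger than $\p^*$. By maximality, each meets $T$, say $s \in (\p^* + (a)^{\textup{inv}}) \cap T$ and $t \in (\p^* + (b)^{\textup{inv}}) \cap T$. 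Then $st \in T$ since $T$ is multiplicatively closed, while I want to argue $st \in \p^*$ for the contradiction. For this I need: the product of the invariant hull of $a$ with the invariant hull of $b$ is contained in the invariant hull of $ab$ (or at least inside $\p^*$). This is where the Hopf-algebroid structure enters—I expect Appendix A to supply exactly that the invariant ideals are closed under the relevant operations and that $(a)^{\textup{inv}} \cdot (b)^{\textup{inv}} \subseteq (ab)^{\textup{inv}}$, which follows because the comodule structure map is an algebra map (the coaction respects products). Granting this, $st \in (\p^* + (a)^{\textup{inv}})(\p^* + (b)^{\textup{inv}}) \subseteq \p^* + (ab)^{\textup{inv}} \subseteq \p^*$ since $ab \in \p^*$ and $\p^*$ is invariant, contradicting $st \in T$ and $\p^* \cap T = \varnothing$.

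The main obstacle I anticipate is bookkeeping around the invariant hull: I need to know that for any homogeneous element $x$ there is a smallest invariant ideal $(x)^{\textup{inv}}$ containing it, that this hull behaves multiplicatively as above, and that $I^*$ interacts correctly with sums—namely that $(\p^* + (a)^{\textup{inv}})$ is genuinely invariant (a sum of invariant ideals is invariant) and strictly contains $\p^*$ when $a \notin \p^*$ (immediate, since $a \in (a)^{\textup{inv}}$). All of these are exactly the formal properties of invariant ideals of a Hopf algebroid that Appendix A is advertised to establish, so in the body of the proof I would simply cite Appendix A for the existence and multiplicativity of invariant hulls and for closure of invariant ideals under sums, then run the Krull argument above. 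If for some reason the hull does not satisfy $(a)^{\textup{inv}}(b)^{\textup{inv}} \subseteq (ab)^{\textup{inv}}$ on the nose, the fallback is to observe it lands in the radical of $(ab)^{\textup{inv}}$ or to pass through $\sqrt{\phantom{x}}$ of invariant ideals, but since we are in a graded-commutative setting over $\FF_2$ and the coaction is an algebra map, I expect the clean containment to hold and the contradiction to go through directly.
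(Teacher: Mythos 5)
Your reduction to ``an invariant ideal maximal among invariant ideals disjoint from a multiplicative set is prime'' is the right shape of argument, but the step you flag as the crux is exactly where it breaks, and it cannot be repaired formally. The containment $(a)^{\textup{inv}}(b)^{\textup{inv}} \subseteq (ab)^{\textup{inv}}$ (in the paper's notation, $(a)^\sharp(b)^\sharp \subseteq (ab)^\sharp$) is false: because $\eta_R$ is an algebra map, expanding $\eta_R(ab)=\eta_R(a)\eta_R(b)$ in a basis shows the \emph{reverse} inclusion $(ab)^\sharp \subseteq (a)^\sharp(b)^\sharp$. Concretely, take $A=\FF_2[x,y]$ coacted on by $H=\FF_2[t]$ via $x\mapsto x\otimes 1+y\otimes t$, $y\mapsto y\otimes 1$; then $(x)^\sharp=(x,y)$ while $(x^2)^\sharp=(x^2,y^2)$, and $xy\notin(x^2,y^2)$. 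What your Krull argument actually needs is $(a)^\sharp(b)^\sharp\subseteq\p$ whenever $ab\in\p^*$, and unwinding this in coordinates shows it is equivalent to the primality of $\p\Gamma$ in $\Gamma$ --- which is essentially the statement being proved. Your radical fallback is also circular: the appendix's proof that $\sqrt{J}$ of an invariant ideal is invariant (Lemma A.4(vi)) itself invokes the fact that $\p^*$ is an invariant \emph{prime}.

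More fundamentally, the lemma is not a formal consequence of the Hopf-algebroid axioms, so no argument that only cites the general bookkeeping of Appendix A can succeed. For instance, let $A=\FF_2[x]$ with the split Hopf algebroid coming from the exterior Hopf algebra $\Lambda(t)$ and coaction $x\mapsto x\otimes 1+1\otimes t$: then $x^2\in(x)^*$ but $x\notin(x)^*$, so $(x)^*$ is not prime. The input you are missing is the specific algebra structure of $A\cotensor_D\FF_2$, which is polynomial. The paper's proof is a one-line appeal to Landweber: since $\p^*=\eta_R^{-1}(\p\Gamma)$ and $\Gamma\cong\Ext_D^{**}(\FF_2,\FF_2)\otimes(A\cotensor_D\FF_2)$ is split with polynomial $A\cotensor_D\FF_2$, the ideal $\p\Gamma\cong\p\otimes(A\cotensor_D\FF_2)$ has quotient a polynomial ring over the domain $\Ext_D^{**}(\FF_2,\FF_2)/\p$, hence is prime, and $\p^*$ is prime as the preimage of a prime under a ring map. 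You should replace your maximality argument with this (or equivalently, observe that $ab\in\p^*$ means $\eta_R(a)\eta_R(b)\in\p\Gamma$ and invoke primality of $\p\Gamma$ directly).
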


\begin{proof}
    Since $(\pi^A_{**}(HD), \pi^A_{**}(HD \otimes HD)) \cong (\pi^A_{**}(HD), \pi^A_{**}(HD) \otimes (A \cotensor_D \FF_2))$ is a split Hopf algebroid with $A \cotensor_D \FF_2 \cong \FF_2[\xi_1^2, \xi_2^4, \xi_3^8, \ldots, \xi_n^{2^n}, \ldots]$ as algebras, the assumptions of Section 4 of \cite{landweber-associated} are met and Proposition 4.2(a) of \cite{landweber-associated} applies.
\end{proof}

\begin{lem}
    The map $\Spec^h(\Ext_D^{**}(\FF_2, \FF_2)) \to \Specinv(\Ext_D^{**}(\FF_2, \FF_2))$ given by $\p \mapsto \p^*$ is a retraction.
\end{lem}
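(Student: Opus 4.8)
The claim is that $\p \mapsto \p^*$ defines a retraction $\Spec^h(\Ext_D^{**}(\FF_2,\FF_2)) \to \Specinv(\Ext_D^{**}(\FF_2,\FF_2))$, where the latter is viewed as a subspace of the former. Two things must be checked: that the map restricts to the identity on the subspace, and that it is continuous.

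\textbf{The identity condition.} If $\p$ is already invariant, then $\p$ itself is an invariant ideal contained in $\p$, and since $\p^*$ is by definition the \emph{largest} invariant ideal contained in $\p$, we get $\p \subseteq \p^*\subseteq \p$, hence $\p^* = \p$. So the composite $\Specinv \hookrightarrow \Spec^h \xrightarrow{(-)^*} \Specinv$ is the identity. (This uses only the defining property of $(-)^*$, which the appendix establishes; the previous lemma guarantees $\p^*$ is prime so the map lands in $\Specinv$.)

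\textbf{Continuity.} It suffices to check that the preimage of each basic closed set $V^{\textup{inv}}(I) = V(I)\cap\Specinv$ is closed in $\Spec^h$. I would first observe that, because $(-)^*$ is order-preserving and $I^*$ is the largest invariant ideal inside $I$, one has $I \subseteq \p$ with $\p$ invariant if and only if $I^*\subseteq \p$; more generally for arbitrary $\p$, $\p^* \in V(I)$ iff $I \subseteq \p^*$ iff (by maximality of $I^*$ among invariant ideals inside $\p$, using that $I\subseteq\p^*$ forces $I^*\subseteq\p^*$, and conversely $I^*\subseteq\p^*\subseteq\p$... ) — the cleaner route is: $\p^*\supseteq I \iff \p^*\supseteq I \iff \p \supseteq$ (the smallest invariant ideal containing $I$?), which is not quite right since $\p^*$ need not contain $I$. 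Let me instead argue directly: $\p^* \supseteq I \iff \p^*\supseteq I$, and since $\p^*$ is invariant and contained in $\p$, this happens iff there is \emph{some} invariant ideal $J$ with $I\subseteq J\subseteq \p$; the largest such, if any exists inside $\p$, is contained in $\p^*$, so $\p^*\supseteq I \iff \p \supseteq I^{\mathrm{inv}}$ where $I^{\mathrm{inv}}$ denotes the smallest invariant ideal containing $I$ (which exists by Appendix~A, being an intersection—wait, it is the ideal generated by the $A\cotensor_D\FF_2$-suborbit, cf. the appendix). Thus $\{\p : \p^*\in V(I)\} = V(I^{\mathrm{inv}})$, a closed set. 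Hence the preimage of $V^{\textup{inv}}(I)$ under $(-)^*$ is $V(I^{\mathrm{inv}})\cap (\text{preimage of }\Specinv)$; but the preimage of all of $\Specinv$ is everything (the map always lands there), so the preimage of $V^{\textup{inv}}(I)$ is simply $V(I^{\mathrm{inv}})$, which is closed. Therefore $(-)^*$ is continuous.

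\textbf{Expected obstacle.} The only delicate point is the precise relationship between $\p^*$ (largest invariant ideal inside $\p$) and the operation $I \mapsto I^{\mathrm{inv}}$ (smallest invariant ideal containing $I$), and making sure that the relevant closure operations on invariant ideals behave as claimed — in particular that a smallest invariant ideal containing $I$ exists. This is exactly the content developed in Appendix~A (invariant ideals are stable under the relevant operations, and the poset of invariant ideals has the needed lattice-theoretic structure), so I would simply cite those appendix results. Everything else is a short formal argument from the universal property of $(-)^*$ and the order-preservation of $(-)^*$. I expect no computational content beyond unwinding these definitions.
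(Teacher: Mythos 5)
Your proposal is correct and takes essentially the same route as the paper: the paper first invokes the Appendix~A observation that every closed subset of $\Specinv(\Ext_D^{**}(\FF_2,\FF_2))$ is $V^{\textup{inv}}(I)$ for an \emph{invariant} $I$ and then shows the preimage is $V(I)$ via maximality of $\p^*$, while you handle an arbitrary homogeneous $I$ directly and identify the preimage as $V(I^\sharp)$ --- the same two facts (maximality of $(-)^*$, minimality of $(-)^\sharp$) packaged slightly differently. The meandering in your continuity paragraph resolves to the correct equivalence $I \subseteq \p^* \iff I^\sharp \subseteq \p$, so no gap remains, though the final writeup should delete the false starts.
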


\begin{proof}
    Since $\p^* = \p$ when $\p \in \Specinv(\Ext_D^{**}(\FF_2, \FF_2))$, it suffices to show that this map is continuous. Note that if $I$ is an invariant ideal in $\Ext_D^{**}(\FF_2, \FF_2)$ and $\p \in \Specinv(\Ext_D^{**}(\FF_2, \FF_2))$, then $I \subseteq \p$ if and only if $I \subseteq \p^*$, hence the preimage of $V^\textup{inv}(I)$ is $V(I)$.
\end{proof}

\begin{rem}
    The previous lemma allows us to view $\Specinv(\Ext_D^{**}(\FF_2, \FF_2))$ as a quotient of $\Spec^h(\Ext_D^{**}(\FF_2, \FF_2))$, and it seems that this may be more natural than to view it as a subspace. Speaking very informally (and ignoring grading issues), $\Specinv(\Ext_D^{**}(\FF_2, \FF_2))$ should be the underlying topological space of the stack associated to the Hopf algebroid $(\Ext_D^{**}(\FF_2, \FF_2), \Ext_D^{**}(\FF_2, \FF_2) \otimes (A \cotensor_D \FF_2))$. Since this is a split Hopf algebroid, the associated stack is the quotient stack $[\Spec^h(\Ext_D^{**}(\FF_2, \FF_2)) / \Spec^h(A \cotensor_D \FF_2)]$ coming from the coaction of the Hopf algebra $A \cotensor_D \FF_2$ on $\Ext_D^{**}(\FF_2, \FF_2)$.
\end{rem}

Next we check that the map $\Spc(\res_{A,D}) : \Spc(\Stable(D)^c) \to \Spc(\Stable(A)^c)$ factors through $\Specinv(\Ext_D^{**}(\FF_2, \FF_2))$.

\begin{lem}\label{lem:quotient}
    The map $\res_{A,D} : \Spc(\Stable(D)^c) \to \Spc(\Stable(A)^c)$ descends to the quotient to give a map $\wtilde{\varphi}_A : \Specinv(\Ext_D^{**}(\FF_2, \FF_2)) \to \Spc(\Stable(A)^c)$:
        \begin{center}
            \begin{tikzcd}[column sep = huge]
                \Spc(\Stable(D)^c) \ar[rr, "\Spc(\res_{A,D})"] \ar[d, "\cong"'] & & \Spc(\Stable(A)^c). \\
                \Spec^h(\Ext_D^{**}(\FF_2, \FF_2)) \ar[d] & & \\
                \Specinv(\Ext_D^{**}(\FF_2, \FF_2)) \ar[rruu, "\wtilde{\varphi}_A"', dashed] & &
            \end{tikzcd}
        \end{center}
\end{lem}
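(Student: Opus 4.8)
The plan is to show that $\Spc(\res_{A,D})$ is constant on the fibers of the retraction $\p \mapsto \p^*$, so that it factors (uniquely, since the retraction is a topological quotient map) through $\Specinv(\Ext_D^{**}(\FF_2, \FF_2))$. Concretely, identifying $\Spc(\Stable(D)^c)$ with $\Spec^h(\Ext_D^{**}(\FF_2,\FF_2))$ via $\varphi_D$ (Theorem \ref{thm:Spc(Stable(D)^c)}), I must check that for every $\p \in \Spec^h(\Ext_D^{**}(\FF_2,\FF_2))$ one has $\Spc(\res_{A,D})(\varphi_D(\p)) = \Spc(\res_{A,D})(\varphi_D(\p^*))$. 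Since the target is the Balmer spectrum and prime tt-ideals are determined by which objects they contain, it suffices to compare the tt-ideals $\res_{A,D}^{-1}(\varphi_D(\p))$ and $\res_{A,D}^{-1}(\varphi_D(\p^*))$ inside $\Stable(A)^c$; here I am writing $\Spc(\res_{A,D})$ for the map $\mathcal P \mapsto \res_{A,D}^{-1}(\mathcal P)$. Unwinding the definition of $\varphi_D$ through $\res_{A,D}$, for $Y \in \Stable(A)^c$ membership of $Y$ in $\Spc(\res_{A,D})(\varphi_D(\p))$ is equivalent to $\res_{A,D}(Y)_\p = 0$, i.e. to $\p \notin \sigma_D(\res_{A,D}(Y))$, by Remark \ref{rem:D-supp}. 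So the factorization amounts to the assertion: for $Y \in \Stable(A)^c$, $\p \in \sigma_D(\res_{A,D}(Y))$ if and only if $\p^* \in \sigma_D(\res_{A,D}(Y))$.

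The key input is that the support of an object restricted from $A$ is \emph{invariant}: for $Y \in \Stable(A)^c$ the subset $\sigma_D(\res_{A,D}(Y)) \subseteq \Spec^h(\Ext_D^{**}(\FF_2,\FF_2))$ is closed under the coaction of the Hopf algebra $A\cotensor_D\FF_2$, equivalently it is a union of fibers of $\p \mapsto \p^*$ — or, in Hopf-algebroid language, it is stable under $\eta_R$. This is because $\res_{A,D}(Y)$ is not an arbitrary object of $\Stable(D)^c$ but lies in the image of the restriction functor, and $\pi^D_{**}(\res_{A,D}(Y))$ carries a comodule structure over the Hopf algebroid $(\pi^A_{**}(HD), \pi^A_{**}(HD\otimes HD))$ compatible with its $\Ext_D^{**}(\FF_2,\FF_2)$-module structure, via the $HD$-based Adams resolution / change of rings discussed in Section 2. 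Indeed $\sigma_D(\res_{A,D}(Y)) = V(\ann \pi^D_{**}(\res_{A,D}(Y)))$ and the annihilator of a comodule is an invariant ideal by the standard Landweber-type argument (Section 4 of \cite{landweber-associated}), so $V$ of it is a union of $\p\mapsto\p^*$ fibers. Granting this, if $\p \in \sigma_D(\res_{A,D}(Y))$ then $\p^*$, being the generic point of the closure of the orbit of $\p$ (more precisely the invariant ideal $\p^* = (\ann\pi^D_{**}(\res_{A,D}(Y)))^{**}$-type point lying below $\p$), also lies in the invariant closed set $\sigma_D(\res_{A,D}(Y))$; conversely $\p^* \in \sigma_D(\res_{A,D}(Y))$ forces $\p \supseteq \p^* \supseteq$ an invariant ideal cutting out a closed set containing $\p$, hence $\p$ is in that set too. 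Either way $\p$ and $\p^*$ are simultaneously in or out of $\sigma_D(\res_{A,D}(Y))$, which is exactly what we need.

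Once the factorization on the level of sets is established, continuity of $\wtilde\varphi_A$ is automatic: the retraction $\Spec^h(\Ext_D^{**}(\FF_2,\FF_2)) \to \Specinv(\Ext_D^{**}(\FF_2,\FF_2))$ is a quotient map (it is a continuous retraction onto a subspace, and the previous lemma shows preimages of basic closed sets $V^{\mathrm{inv}}(I)$ are the closed sets $V(I)$, so a subset of the quotient is closed iff its preimage is), and $\Spc(\res_{A,D})$ is continuous, so the induced map out of the quotient is continuous and the diagram commutes by construction. The main obstacle is the invariance claim in the previous paragraph — making precise that $\pi^D_{**}(\res_{A,D}(Y))$ is a comodule over the relevant Hopf algebroid and that its annihilator is therefore invariant, so that $\sigma_D(\res_{A,D}(Y))$ is a union of fibers of $\p\mapsto\p^*$; everything else is formal manipulation of the definitions of $\varphi_D$, $\sigma_D$, and the retraction.
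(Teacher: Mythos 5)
Your overall strategy coincides with the paper's: identify $\Spc(\Stable(D)^c)$ with $\Spec^h(\Ext_D^{**}(\FF_2,\FF_2))$ via $\varphi_D$, reduce the factorization to the claim that for $Y \in \Stable(A)^c$ one has $\p \in \sigma_D(\res_{A,D}(Y))$ if and only if $\p^* \in \sigma_D(\res_{A,D}(Y))$, and deduce continuity from the fact that the retraction $\p \mapsto \p^*$ is a quotient map. That reduction and the concluding continuity step are correct.

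The gap is in your key input. You assert $\sigma_D(\res_{A,D}(Y)) = V\bigl(\ann_{\Ext_D^{**}(\FF_2,\FF_2)}\pi^D_{**}(\res_{A,D}(Y))\bigr)$ and then invoke invariance of the annihilator of a comodule. But the identity $\sigma(X) = V(\ann \pi_{**}(X))$ requires $\pi_{**}(X)$ to be finitely generated over the coefficient ring; in general the module-theoretic support $\{\p : M_\p \neq 0\}$ is only contained in $V(\ann M)$, and it need not be a union of fibers of $(-)^*$ even when $\ann M$ is invariant. The paper makes this identification only for the elementary quotients $E \in \mathcal{Q}$, where $\Ext_E^{**}(\FF_2,\FF_2)$ is a polynomial ring, hence coherent, so that $\pi^E_{**}$ of a compact object is finitely generated. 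The ring $\Ext_D^{**}(\FF_2,\FF_2)$ is not known to be Noetherian or coherent (it is only known up to $F$-isomorphism), so your step is unjustified exactly at the point where the argument is nontrivial. The paper's proof circumvents this by (i) proving the invariance statement $\q \in \sigma_E(\res_{A,E}(Y)) \iff \q^* \in \sigma_E(\res_{A,E}(Y))$ for each elementary $E$, using that $\sqrt{\ann_{\Ext_E^{**}(\FF_2,\FF_2)}(HE_{**}(Y))}$ is invariant, and (ii) transporting it to $D$ via the Quillen stratification $\sigma_D(\res_{A,D}(Y)) = \bigcup_{E \in \mathcal{Q}} \res_{D,E}^*(\sigma_E(\res_{A,E}(Y)))$ of Lemma \ref{lem:quillen-supports} together with the compatibility $\res_{D,E}^*(\q^*) = (\res_{D,E}^*(\q))^*$; the remaining direction ($\p^* \in \sigma_D \Rightarrow \p \in \sigma_D$) uses only $\p^* \subseteq \p$. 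To repair your argument you would need either this detour through the elementary quotients or an independent proof that $\sigma_D(\res_{A,D}(Y))$ is cut out by an invariant ideal.
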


\begin{proof}    
    First note that if $X \in \Stable(A)^c$ and $\q \in \Spec^h(\Ext_E^{**}(\FF_2, \FF_2))$, then since $HE_{**}(X) \cong \pi_{**}^E(\res_{A,E}(X))$, we have
        \begin{align*}
            \q \in \sigma_E(\res_{A,E}(X)) &\iff \q \supseteq \sqrt{\ann_{\Ext_E^{**}(\FF_2, \FF_2)}(HE_{**}(X))} \\
            &\iff \q^* \supseteq \sqrt{\ann_{\Ext_E^{**}(\FF_2, \FF_2)}(HE_{**}(X))} \\
            &\iff \q^* \in \sigma_E(\res_{A,E}(X)),
        \end{align*}
    where we have used the fact that $\sqrt{\ann_{\Ext_E^{**}(\FF_2, \FF_2)}(HE_{**}(X))}$ is an invariant ideal by Proposition 6.3.2(b) of \cite{pal-stable}. Since the quotient map $D \twoheadrightarrow E$ induces a map of Hopf algebroids, it is straightforward to check that
        \[\res_{D,E}^*(\q^*) = (\res_{D,E}^*(\q))^*.\]

    Now suppose $\mathcal{P} \in \Spc(\Stable(D)^c)$, and let $\p \in \Spec^h(\Ext_D^{**}(\FF_2, \FF_2))$ denote the corresponding homogeneous prime ideal. Consider $\p^* \in \Specinv(\Ext_D^{**}(\FF_2, \FF_2)) \subseteq \Spec^h(\Ext_D^{**}(\FF_2, \FF_2))$ and the corresponding prime tt-ideal $\mathcal{P}^* \in \Spc(\Stable(D)^c)$. Since $\p^* \subseteq \p$, if $\p^* \in \sigma_D(\res_{A,D}(X))$ then $\p \in \sigma_D(\res_{A,D}(X))$. Conversely, if $\p \in \sigma_D(\res_{A,D}(X)) = \bigcup_{E \in \mathcal{Q}} \res_{D,E}^*(\sigma_E(\res_{A,E}(X)))$, then $\p = \res_{D,E}^*(\q)$ for some $\q \in \sigma_E(\res_{A,E}(X))$ and hence $\p^* = (\res_{D,E}^*(\q))^* = \res_{D,E}(\q^*)$ with $\q^* \in \sigma_E(\res_{A,E}(X))$, i.e., $\p^* \in \bigcup_{E \in \mathcal{Q}} \res_{D,E}^*(\sigma_E(\res_{A,E}(X))) = \sigma_D(\res_{A,D}(X))$. Therefore,
        \begin{align*}
            X \notin \Spc(\res_{A,D})(\mathcal{P}) &\iff \res_{A,D}(X) \notin \mathcal{P} \\
            &\iff \mathcal{P} \in \supp_D(\res_{A,D}(X)) \\
            &\iff \p \in \sigma_D(\res_{A,D}(X)) \\
            &\iff \p^* \in \sigma_D(\res_{A,D}(X)) \\
            &\iff \mathcal{P}^* \in \supp_D(\res_{A,D}(X)) \\
            &\iff \res_{A,D}(X) \notin \mathcal{P}^* \\
            &\iff X \notin \Spc(\res_{A,D})(\mathcal{P}^*).
        \end{align*}
    Thus, $\Spc(\res_{A,D})(\mathcal{P}) = \Spc(\res_{A,D})(\mathcal{P}^*)$, as desired.
\end{proof}

Now we work to show that $\wtilde{\varphi}_A$ is a homeomorphism. The next result is a corollary of Palmieri's nilpotence theorem.

\begin{prop}\label{prop:A-surj}
    The map
        \[\wtilde{\varphi}_A : \Specinv(\Ext_D^{**}(\FF_2, \FF_2)) \to \Spc(\Stable(A)^c)\]
    is surjective.
\end{prop}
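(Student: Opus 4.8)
The plan is to reduce surjectivity of $\wtilde{\varphi}_A$ to surjectivity of $\Spc(\res_{A,D})$, which in turn should follow from Palmieri's nilpotence theorem together with the abstract surjectivity criterion of \cite{bchs-surjectivity}, exactly as in the proof of Proposition \ref{prop:D-surj}. First I would recall the factorization
    \[\Spc(\Stable(D)^c) \twoheadrightarrow \Specinv(\Ext_D^{**}(\FF_2, \FF_2)) \xrightarrow{\wtilde{\varphi}_A} \Spc(\Stable(A)^c)\]
from Lemma \ref{lem:quotient}, where the first map is the retraction $\p \mapsto \p^*$ composed with the homeomorphism $\rho_D$. Since the retraction is surjective, it suffices to show that the composite $\Spc(\res_{A,D}) : \Spc(\Stable(D)^c) \to \Spc(\Stable(A)^c)$ is surjective.

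For that, I would invoke Palmieri's nilpotence theorem (Theorem 5.1.6 of \cite{pal-stable}, or equivalently the main theorem of \cite{pal-nilpotence-i}), which says precisely that the single stable morphism $\res_{A,D} : \Stable(A) \to \Stable(D)$ detects nilpotence: a self-map $f$ of a compact object $X \in \Stable(A)^c$ is nilpotent if and only if $HD_{**}(f) = \pi^A_{**}(HD \otimes f)$ is nilpotent. Equivalently, $\{\res_{A,D}\}$ is a family of stable morphisms that jointly detects nilpotence. Then Theorem 1.3 of \cite{bchs-surjectivity} applies verbatim and gives that $\Spc(\res_{A,D})$ is surjective. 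Composing with the surjection from $\Spc(\Stable(D)^c)$ onto $\Specinv(\Ext_D^{**}(\FF_2, \FF_2))$ and using that $\wtilde{\varphi}_A$ fits into the commuting triangle of Lemma \ref{lem:quotient}, we conclude that $\wtilde{\varphi}_A$ is surjective.

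I do not expect any real obstacle here: the only point requiring a small amount of care is to confirm that the detection-of-nilpotence statement in \cite{pal-stable} is stated (or can be restated) in the form required by the surjectivity criterion of \cite{bchs-surjectivity}, namely as a family of compact-preserving tt-functors out of $\Stable(A)$ that jointly detect $\otimes$-nilpotence of morphisms between compact objects — and Palmieri's theorem gives this with the singleton family $\{\res_{A,D}\}$. Everything else is formal: surjectivity of a composite follows from surjectivity of the factors, and the relevant diagram commutes by Lemma \ref{lem:quotient}.
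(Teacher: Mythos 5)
Your proposal is correct and follows essentially the same route as the paper: invoke Palmieri's nilpotence theorem for the single restriction $\res_{A,D}$ (the paper cites Theorem 5.1.5 of \cite{pal-stable} for this; 5.1.6 is the version for the family of elementary quotients), apply Theorem 1.3 of \cite{bchs-surjectivity} to get surjectivity of $\Spc(\res_{A,D})$, and conclude via the factorization of Lemma \ref{lem:quotient}.
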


\begin{proof}
    The stable morphism $\res_{A,D} : \Stable(A) \to \Stable(D)$ detects nilpotence by Theorem 5.1.5 of \cite{pal-stable}, therefore $\res_{A,D}^* : \Spc(\Stable(D)^c) \to \Spc(\Stable(A)^c)$ is surjective by Theorem 1.3 of \cite{bchs-surjectivity}. Thus, $\wtilde{\varphi}_A$ is surjective by Lemma \ref{lem:quotient}.
\end{proof}

We will now define a notion of support for $\Stable(A)^c$.

\begin{defn}
    Given $X \in \Stable(A)^c$, define
        \begin{align*}
            \wtilde{\sigma}_A(X) &= \sigma_D(\res_{A,D}(X)) \cap \Specinv(\Ext_D^{**}(\FF_2, \FF_2)) \\
            &= \left\{\p \in \Specinv(\Ext_D^{**}(\FF_2, \FF_2)) : HD_{**}(X)_\p \neq 0\right\}.
        \end{align*}
\end{defn}

\begin{rem}\label{rem:A-supp}
    Note that for $X \in \Stable(A)^c$, by the diagram defining $\wtilde{\sigma}_A$ in Lemma \ref{lem:quotient} we have
        \begin{align*}
            \wtilde{\varphi}_A^{-1}(\supp_A(X)) &= \rho_D(\supp_D(\res_{A,D}(X)) \cap \Specinv(\Ext_D^{**}(\FF_2, \FF_2)) \\
            &= \sigma_D(\res_{A,D}(X)) \cap \Specinv(\Ext_D^{**}(\FF_2, \FF_2)) \\
            &= \wtilde{\sigma}_A(X).
        \end{align*}
    where the second equality follows from Remark \ref{rem:D-supp}.
\end{rem}

Next we show that every quasi-compact open subset of $\Specinv(\Ext_D^{**}(\FF_2, \FF_2))$ (see Lemma \ref{lem:qc-opens}) arises as the $\wtilde{\sigma}_A$-support of a compact object, which is a corollary of Palmieri's periodicity theorem.

\begin{lem}\label{lem:A-iterated-cofiber}
    For any finitely generated invariant ideal $I$ in $\Ext_D^{**}(\FF_2, \FF_2)$, there exists an object $X \in \Stable(A)^c$ with $\wtilde{\sigma}_A(X) = V^\textup{inv}(I)$.
\end{lem}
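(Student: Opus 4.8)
The plan is to reduce to the case of a principal invariant ideal $I = (f)$ and then invoke Palmieri's periodicity theorem to realize $V^\textup{inv}(f)$ as the support of the cofiber $S^0_A/f$ of a suitable self-map. First I would note that by Lemma \ref{lem:qc-opens} (the appendix result referenced), a closed subset of $\Specinv(\Ext_D^{**}(\FF_2, \FF_2))$ with quasi-compact complement has the form $V^\textup{inv}(I)$ for a finitely generated invariant ideal $I$. Writing $I = (f_1, \ldots, f_k)$ with each $f_i$ homogeneous and invariant — which one can do since every finitely generated invariant ideal is generated by finitely many invariant homogeneous elements (see Appendix A) — it suffices by induction to handle a single invariant homogeneous element $f$, since tensoring compact objects corresponds to intersecting $\wtilde\sigma_A$-supports: if $X_{j-1}$ realizes $V^\textup{inv}((f_1,\dots,f_{j-1}))$ and $X_f$ realizes $V^\textup{inv}((f))$, then $X_{j-1} \otimes X_f$ realizes the intersection $V^\textup{inv}((f_1,\dots,f_j))$, exactly as in the proof of Lemma \ref{lem:D-koszul}. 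The key point is that $\wtilde\sigma_A(X \otimes Y) = \wtilde\sigma_A(X) \cap \wtilde\sigma_A(Y)$, which follows because $HD_{**}$ is a ring homology theory on $\Stable(A)^c$ and $HD_{**}(X \otimes Y) \cong HD_{**}(X) \otimes_{\Ext_D^{**}(\FF_2,\FF_2)} HD_{**}(Y)$ after identifying $\res_{A,D}$ with $HD \otimes -$, together with the fact that for finitely generated modules $M, N$ over a (graded) ring one has $\supp(M \otimes N) = \supp(M) \cap \supp(N)$.

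For the principal case: given an invariant homogeneous $f \in \Ext_D^{**}(\FF_2, \FF_2)$, I want a compact $A$-object $X$ with $HD_{**}(X) = \Ext_D^{**}(\FF_2,\FF_2)/f$ up to the usual long exact sequence, so that $\wtilde\sigma_A(X) = V^\textup{inv}(f)$. Because $f$ is invariant and $HD_{**}(S^0_A) = \Ext_D^{**}(\FF_2,\FF_2)$, Palmieri's periodicity theorem (Theorem 5.1.5 / the periodicity statement in \cite{pal-stable}, applied with the nilpotence theorem for $A$) produces, for some $N \ge 1$, a self-map $\tilde f : \Sigma^? S^0_A \to S^0_A$ in $\Stable(A)^c$ whose effect on $HD_{**}$ is multiplication by $f^N$. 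I would then set $X = \cone(\tilde f) = S^0_A / f^N$. From the cofiber sequence and the long exact sequence in $HD_{**}$, one reads off that for $\p \in \Specinv(\Ext_D^{**}(\FF_2,\FF_2))$, $HD_{**}(X)_\p \ne 0$ iff multiplication by $f^N$ is not invertible on $\Ext_D^{**}(\FF_2,\FF_2)_\p$, iff $f \in \p$ — so $\wtilde\sigma_A(X) = V^\textup{inv}((f^N)) = V^\textup{inv}((f))$, since $f$ and $f^N$ have the same radical and $V^\textup{inv}$ only depends on the radical.

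The main obstacle is the construction of the self-map $\tilde f$ realizing a power of $f$: this is precisely where Palmieri's nilpotence and periodicity theorems do the heavy lifting, and one must check that the hypotheses of the periodicity theorem apply to the invariant element $f$ (invariance is exactly what guarantees that $f$, or a power of it, survives to a self-map of the sphere — this is the analogue of the fact that only $v_n$-type elements, which are invariant under the relevant coaction, give rise to self-maps of finite spectra). A secondary subtlety is bookkeeping with the bigrading: the self-map $\tilde f$ will have some bidegree determined by $|f^N|$, and $X = \cone(\tilde f)$ lives in $\Stable(A)^c$ because it is a cofiber of a map between compact objects. Once $\tilde f$ is in hand, the rest is the routine long-exact-sequence argument modeled on Lemma \ref{lem:D-koszul}, combined with the multiplicativity of $\wtilde\sigma_A$ for the induction over generators.
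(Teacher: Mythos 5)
There is a genuine gap, and it sits exactly at the point your proof leans on hardest: the claim that a finitely generated invariant ideal of $\Ext_D^{**}(\FF_2,\FF_2)$ can be generated by finitely many \emph{invariant} homogeneous elements. Appendix A does not assert this, and it is false in general for Hopf algebroids of this kind: an invariant ideal need not have any invariant generators beyond those of lowest degree (compare $I_n = (p, v_1, \ldots, v_{n-1}) \subseteq BP_*$, where $v_1$ satisfies $\eta_R(v_1) \equiv v_1$ only modulo $(p)$). The same phenomenon occurs here: the coaction on $\Ext_D^{**}(\FF_2,\FF_2)$ lowers homological degree, so a generator such as (the preimage of) $h_{20}$ is invariant only modulo $h_{10}$, not on the nose. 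Consequently your reduction to principal ideals $(f)$ with $f$ invariant, followed by an application of the periodicity theorem to produce a self-map of the \emph{sphere} $S^0_A$ realizing $f^N$, cannot get off the ground for a general finitely generated invariant ideal: most of the generators do not admit self-maps of $S^0_A$ at all.

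The paper's proof is built precisely to avoid this. It orders the generators by ascending homological degree so that each $f_j$ is invariant \emph{modulo} $(f_1,\ldots,f_{j-1})$, and then applies the periodicity theorem (Theorem 6.1.3 of \cite{pal-stable}) in its relative form: since $f_j$ is invariant modulo $\sqrt{\ann(HD_{**}(X_{j-1}))}$, one gets a self-map of $X_{j-1}$ (not of $S^0_A$) inducing multiplication by $f_j^{n_j}$ on $HD$-homology, and $X_j$ is its cofiber. This forces an extra bookkeeping step that your outline omits: one must verify inductively that $\sqrt{\ann(HD_{**}(X_j))} = \sqrt{(f_1,\ldots,f_j)}$, since that equality is the hypothesis needed to invoke periodicity at the next stage; the paper does this with a five-lemma and diagram-chase argument. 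A secondary, repairable issue in your write-up is the justification of $\wtilde{\sigma}_A(X \otimes Y) = \wtilde{\sigma}_A(X) \cap \wtilde{\sigma}_A(Y)$ via a K\"unneth isomorphism $HD_{**}(X \otimes Y) \cong HD_{**}(X) \otimes_{\Ext_D^{**}(\FF_2,\FF_2)} HD_{**}(Y)$, which requires a flatness or collapsing-spectral-sequence argument you do not supply; the long-exact-sequence argument of Lemma \ref{lem:D-koszul} sidesteps this, but it does not rescue the main reduction.
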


\begin{proof}
    Since the coaction on $\Ext_D^{**}(\FF_2, \FF_2)$ lowers homological degree, we may rearrange the generators of $I$ to be in ascending order of homological degree and write $I = (f_1, \ldots, f_k)$ such that $f_j$ is invariant mod $(f_1, \ldots, f_{j-1})$ for each $j = 1, \ldots, k$.
    
    We will inductively define $X_j$ with
        \[\wtilde{\sigma}_A(X_j) = V^\textup{inv}\left(\sqrt{\ann_{\Ext_D^{**}(\FF_2, \FF_2)}(HD_{**}(X_j))}\right) = V^\textup{inv}((f_1, \ldots, f_j))\]
    for $j = 0, \ldots, k$ as follows. First, let $X_0 = S^0_A$, which has $\wtilde{\sigma}_A(X_0) = V^\textup{inv}((0))$. Now suppose $X_{j-1}$ has been defined. By the inductive hypothesis, $\sqrt{(f_1, \ldots, f_{j-1})} = \sqrt{\ann_{\Ext_D^{**}(\FF_2, \FF_2)}(HD_{**}(X_{j-1}))}$.
    Since $f_j$ is invariant mod $(f_1, \ldots, f_{j-1})$, it is also invariant mod $\sqrt{\ann_{\Ext_D^{**}(\FF_2, \FF_2)}(HD_{**}(X_{j-1}))}$.
    Therefore, by Theorem 6.1.3 of \cite{pal-stable}, there is a self-map of $X_{j-1}$ such that the induced map on $HD$-homology is multiplication by $f_j^{n_j}$ for some $n_j$. Take $X_j$ to be the cofiber of this map. We see from the long exact sequence
        \begin{center}
            \begin{tikzcd}
                \cdots \ar[r] & HD_{**}(X_{j-1}) \ar[r, "f_j^{n_j}"] & HD_{**}(X_{j-1}) \ar[r] & HD_{**}(X_j) \ar[r] & \cdots
            \end{tikzcd}
        \end{center}
    that $\p \in \wtilde{\sigma}_A(X_j)$ if and only if $\p \in \wtilde{\sigma}_A(X_{j-1})$ and $f_j^{n_j}$ is not a unit in $\Ext_D^{**}(\FF_2, \FF_2)_\p$, i.e., $f_j \in \p$. Hence,
        \begin{align*}
            \wtilde{\sigma}_A(X_j) &= \wtilde{\sigma}_A(X_{j-1}) \cap V^\textup{inv}((f_j)) \\
            &= V^\textup{inv}((f_1, \ldots, f_{j-1})) \cap V^\textup{inv}((f_j)) \\
            &= V^\textup{inv}((f_1, \ldots, f_j)).
        \end{align*}
    Moreover, we see by the five lemma that
        \[(f_1, \ldots, f_{j-1}) \subseteq \sqrt{\ann_{\Ext_D^{**}(\FF_2, \FF_2)}(HD_{**}(X_j))},\]
    and chasing through the diagram
        \begin{center}
            \begin{tikzcd}
                \cdots \ar[r] & HD_{**}(X_{j-1}) \ar[r] \ar[d, "f_j^{n_j}"'] & HD_{**}(X_j) \ar[r] \ar[d, "f_j^{n_j}"] \ar[rd, bend left =  13, "0"'] & HD_{**}(X_{j-1}) \ar[r] \ar[d, "f_j^{n_j}"] & \cdots \\
                \cdots \ar[r] & HD_{**}(X_{j-1}) \ar[r] \ar[d, "f_j^{n_j}"'] \ar[rd, bend right = 13, "0"] & HD_{**}(X_j) \ar[r] \ar[d, "f_j^{n_j}"] & HD_{**}(X_{j-1}) \ar[r] \ar[d, "f_j^{n_j}"] & \cdots \\
                \cdots \ar[r] & HD_{**}(X_{j-1}) \ar[r] & HD_{**}(X_j) \ar[r] & HD_{**}(X_{j-1}) \ar[r] & \cdots
            \end{tikzcd}
        \end{center}
    shows that $f_j \in \sqrt{\ann_{\Ext_D^{**}(\FF_2, \FF_2)}(HD_{**}(X_j))}$. Therefore,
        \[V^\textup{inv}\left(\sqrt{\ann_{\Ext_D^{**}(\FF_2, \FF_2)}(HD_{**}(X_j))}\right) \subseteq V^\textup{inv}((f_1, \ldots, f_j)).\]
    Also, if $\p \not\supseteq \sqrt{\ann_{\Ext_D^{**}(\FF_2, \FF_2)}(HD_{**}(X_j))}$, then $X_\p = 0$, so
        \[\wtilde{\sigma}_A(X_j) \subseteq V^\textup{inv}\left(\sqrt{\ann_{\Ext_D^{**}(\FF_2, \FF_2)}(HD_{**}(X_j))}\right).\]
    Altogether, we have
        \[\wtilde{\sigma}_A(X_j) = V^\textup{inv}\left(\sqrt{\ann_{\Ext_D^{**}(\FF_2, \FF_2)}(HD_{**}(X_j))}\right) = V^\textup{inv}((f_1, \ldots, f_j)),\]
    completing the induction step and finishing the proof by taking $X = X_k$.
\end{proof}

We now arrive at our main result.

\begin{thm}\label{thm:Spc(Stable(A)^c)}
    The map
        \[\wtilde{\varphi}_A : \Specinv(\Ext_D^{**}(\FF_2, \FF_2)) \to \Spc(\Stable(A)^c)\]
    is a homeomorphism.
\end{thm}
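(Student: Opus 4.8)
The strategy is to deduce the theorem from the general criterion of Proposition~\ref{prop:tt-result}, applied with $\mathcal{C} = \Stable(A)^c$, with $X = \Specinv(\Ext_D^{**}(\FF_2, \FF_2))$, and with $\varphi = \wtilde{\varphi}_A$. Three things must be checked: that $\Specinv(\Ext_D^{**}(\FF_2, \FF_2))$ is a spectral space, that $\wtilde{\varphi}_A$ is continuous, and that every closed subset of $\Specinv(\Ext_D^{**}(\FF_2, \FF_2))$ with quasi-compact complement has the form $\wtilde{\varphi}_A^{-1}(\supp_A(X))$ for some $X \in \Stable(A)^c$.

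The first point is exactly the content of Appendix~A. For the second, recall that the retraction $r \colon \Spec^h(\Ext_D^{**}(\FF_2, \FF_2)) \to \Specinv(\Ext_D^{**}(\FF_2, \FF_2))$, $\p \mapsto \p^*$, is continuous; since it admits the subspace inclusion as a continuous section, it is a quotient map. By the defining diagram of $\wtilde{\varphi}_A$ in Lemma~\ref{lem:quotient}, the composite $\wtilde{\varphi}_A \circ r$ is the continuous map $\Spc(\res_{A,D})$ transported along the homeomorphism $\Spec^h(\Ext_D^{**}(\FF_2, \FF_2)) \cong \Spc(\Stable(D)^c)$; hence $\wtilde{\varphi}_A$ is continuous because $r$ is a quotient map.

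For the third point, let $Z \subseteq \Specinv(\Ext_D^{**}(\FF_2, \FF_2))$ be closed with quasi-compact complement. Then $\Specinv(\Ext_D^{**}(\FF_2, \FF_2)) \ssm Z$ is a quasi-compact open, so by Lemma~\ref{lem:qc-opens} we may write $Z = V^\textup{inv}(I)$ for a finitely generated invariant ideal $I$ of $\Ext_D^{**}(\FF_2, \FF_2)$. By Lemma~\ref{lem:A-iterated-cofiber} there is an object $X \in \Stable(A)^c$ with $\wtilde{\sigma}_A(X) = V^\textup{inv}(I)$, and by Remark~\ref{rem:A-supp} we get $\wtilde{\varphi}_A^{-1}(\supp_A(X)) = \wtilde{\sigma}_A(X) = V^\textup{inv}(I) = Z$. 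This verifies the hypothesis of Proposition~\ref{prop:tt-result}, so $\wtilde{\varphi}_A$ is injective. Since $\wtilde{\varphi}_A$ is moreover surjective by Proposition~\ref{prop:A-surj}, Proposition~\ref{prop:tt-result} yields that it is a homeomorphism.

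I expect no genuine obstacle at this stage: the difficult inputs have already been isolated --- surjectivity rests on Palmieri's nilpotence theorem (Proposition~\ref{prop:A-surj}), the realization of $V^\textup{inv}(I)$ by a compact object rests on Palmieri's periodicity theorem together with the invariant-ideal bookkeeping of Lemma~\ref{lem:A-iterated-cofiber}, and the topology of $\Specinv(\Ext_D^{**}(\FF_2, \FF_2))$ is handled in Appendix~A. The only step requiring a little care when assembling these pieces is the continuity of $\wtilde{\varphi}_A$, which is why I would spell out the quotient-map argument above rather than leave it implicit.
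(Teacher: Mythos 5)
Your proposal is correct and follows essentially the same route as the paper: reduce to Proposition~\ref{prop:tt-result} using Lemma~\ref{lem:qc-opens}, Lemma~\ref{lem:A-iterated-cofiber}, Remark~\ref{rem:A-supp}, and Proposition~\ref{prop:A-surj}. The extra care you take in verifying continuity of $\wtilde{\varphi}_A$ (via the retraction being a quotient map) and spectrality of $\Specinv(\Ext_D^{**}(\FF_2,\FF_2))$ is sound and simply makes explicit what the paper leaves to Lemma~\ref{lem:quotient} and Appendix~A.
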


\begin{proof}
    If $Z \subseteq \Specinv(\Ext_D^{**}(\FF_2, \FF_2))$ is a closed subset with quasi-compact complement, then we have $Z = V^\textup{inv}(I)$ for some finitely generated invariant ideal $I$ in $\Ext_D^{**}(\FF_2, \FF_2)$ by Lemma \ref{lem:qc-opens}. By Remark \ref{rem:A-supp} and Lemma \ref{lem:A-iterated-cofiber}, there exists an object $X \in \Stable(A)^c$ with
        \[\wtilde{\varphi}_A^{-1}(\supp_A(X)) = \wtilde{\sigma}_A(X) = V^\textup{inv}(I) = Z.\]
    Moreover, $\wtilde{\varphi}_A$ is surjective by Proposition \ref{prop:A-surj}. Thus, by Proposition \ref{prop:tt-result}, $\wtilde{\varphi}_A$ is a homeomorphism.
\end{proof}

We may also write the Balmer spectrum in terms of the elementary quotient Hopf algebras of $A$. Let $\mathcal{Q}_0$ denote the full subcategory of $\mathcal{Q}$ consisting of the conormal elementary quotient Hopf algebras of $A$, which is final in $\mathcal{Q}$. For each $E \in \mathcal{Q}_0$, let us write $\Specinv(\Ext_E^{**}(\FF_2, \FF_2))$ to denote the space of invariant prime ideals of the Hopf algebroid
    \[\left(\pi^A_{**}(HE), \pi^A_{**}(HE \otimes HE)\right) \cong \left(\Ext_E^{**}(\FF_2, \FF_2), \Ext_E^{**}(\FF_2, \FF_2) \otimes (A \cotensor_E \FF_2)\right).\]
Note that each quotient map $E \twoheadrightarrow E'$ in $\mathcal{Q}_0$ induces a map of Hopf algebroids, which in turn induces a map on $\Specinv$.

\begin{cor}\label{cor:A-quillen}
        There is a homeomorphism
        \[\Spc(\Stable(A)^c) \cong \colim_{E \in \mathcal{Q}_0} \Specinv(\Ext_E^{**}(\FF_2, \FF_2)).\]
\end{cor}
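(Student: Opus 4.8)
The plan is to bootstrap from Theorem~\ref{thm:Spc(Stable(A)^c)} together with a ``$\Specinv$-theoretic'' refinement of the Quillen stratification of Section~5. Since $\wtilde\varphi_A \colon \Specinv(\Ext_D^{**}(\FF_2,\FF_2)) \to \Spc(\Stable(A)^c)$ is already a homeomorphism, it suffices to exhibit a homeomorphism
\[
\colim_{E \in \mathcal{Q}_0} \Specinv(\Ext_E^{**}(\FF_2,\FF_2)) \;\xrightarrow{\ \sim\ }\; \Specinv(\Ext_D^{**}(\FF_2,\FF_2)).
\]
Because the object $HE$ --- and hence the Hopf algebroid defining $\Specinv(\Ext_E^{**}(\FF_2,\FF_2))$ --- requires $E$ to be conormal, I would work entirely over $\mathcal{Q}_0$, using that each $E\in\mathcal{Q}_0$ is a conormal quotient of $D$, so that $D\twoheadrightarrow E$ (and each morphism of $\mathcal{Q}_0$) induces a map of split Hopf algebroids. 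By the compatibility $\res^*(\q^*)=(\res^*\q)^*$ recorded in the proof of Lemma~\ref{lem:quotient} --- which applies verbatim to the maps $\res_{D,E}^*$ and to those internal to $\mathcal{Q}_0$ --- the transition maps $\res^*$ carry invariant primes to invariant primes, which both assembles the left-hand colimit and defines the comparison map $g$ above.

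The heart of the argument is to show that $g$ is a homeomorphism. I would consider the two $\mathcal{Q}_0$-diagrams $E\mapsto\Spec^h(\Ext_E^{**}(\FF_2,\FF_2))$ and $E\mapsto\Specinv(\Ext_E^{**}(\FF_2,\FF_2))$ together with two natural transformations between them: the inclusions $\Specinv\hookrightarrow\Spec^h$ (manifestly natural, since the transition maps merely restrict) and the retractions $\p\mapsto\p^*$ of $\Spec^h$ onto $\Specinv$, whose naturality in $E$ is precisely $\res^*(\q^*)=(\res^*\q)^*$; their composite is the identity on each $\Specinv(\Ext_E^{**}(\FF_2,\FF_2))$. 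Passing to colimits and invoking the homeomorphism $\colim_{E\in\mathcal{Q}_0}\Spec^h(\Ext_E^{**}(\FF_2,\FF_2))\cong\Spec^h(\Ext_D^{**}(\FF_2,\FF_2))$ from Section~5 (Proposition~4.2 of \cite{pal-quillen}, together with the finality of $\mathcal{Q}_0$ in $\mathcal{Q}$), one obtains two commuting squares relating $\colim_E\Specinv(\Ext_E^{**}(\FF_2,\FF_2))$ to $\Specinv(\Ext_D^{**}(\FF_2,\FF_2))$ through $\Spec^h(\Ext_D^{**}(\FF_2,\FF_2))$. A short diagram chase then shows that the continuous composite $\Specinv(\Ext_D^{**}(\FF_2,\FF_2)) \hookrightarrow \Spec^h(\Ext_D^{**}(\FF_2,\FF_2)) \cong \colim_{E}\Spec^h(\Ext_E^{**}(\FF_2,\FF_2)) \to \colim_{E}\Specinv(\Ext_E^{**}(\FF_2,\FF_2))$ (inclusion, the Section~5 homeomorphism, then the colimit of the retractions) is a two-sided inverse of $g$, so $g$ is a homeomorphism. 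Composing with $\wtilde\varphi_A$ proves the corollary; one can moreover check, using $\res_{A,E}=\res_{D,E}\circ\res_{A,D}$ and the argument of Lemma~\ref{lem:quotient} applied to each $E$ directly, that $\wtilde\varphi_A\circ g$ is the colimit of the descents of $\Spc(\res_{A,E})$ to $\Specinv(\Ext_E^{**}(\FF_2,\FF_2))$, making the homeomorphism explicit in terms of the elementary quotients.

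I expect the only real friction to be bookkeeping: one must verify that ``the colimit of the invariant loci'' genuinely computes ``the invariant locus of the colimit,'' which reduces to (i) naturality of $\p\mapsto\p^*$ in $E$, so that the retractions descend to the colimit, and (ii) finality of $\mathcal{Q}_0$ in $\mathcal{Q}$, so that the Section~5 colimit may be computed over conormal elementary quotients alone. Both are already available --- (i) from the proof of Lemma~\ref{lem:quotient}, extended to the quotient maps internal to $\mathcal{Q}_0$ --- so once these compatibilities are spelled out the proof is essentially formal.
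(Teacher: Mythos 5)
Your proposal is correct and follows essentially the same route as the paper: both reduce to showing that $\colim_{E \in \mathcal{Q}_0} \Specinv(\Ext_E^{**}(\FF_2,\FF_2)) \to \Specinv(\Ext_D^{**}(\FF_2,\FF_2))$ is a homeomorphism via the commuting squares built from the inclusions $\Specinv \hookrightarrow \Spec^h$ and the retractions $\p \mapsto \p^*$, together with the Section~5 homeomorphism on $\Spec^h$. The only (harmless, arguably slightly cleaner) difference is that you package the conclusion as an explicit two-sided continuous inverse, whereas the paper reads off ``homeomorphism onto its image'' from the top square and surjectivity from the bottom one.
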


\begin{proof}
    For each $E \in \mathcal{Q}_0$, the quotient map $D \twoheadrightarrow E$ induces a map of Hopf algebroids, which in turn induces a map on $\Specinv$. Therefore, we have a map
        \[\colim_{E \in \mathcal{Q}_0} \Specinv(\Ext_E^{**}(\FF_2, \FF_2)) \to \Specinv(\Ext_D^{**}(\FF_2, \FF_2)).\]
    It suffices to show that this map is a homeomorphism since $\Spc(\Stable(A)^c) \cong \Specinv(\Ext_D^{**}(\FF_2, \FF_2))$ by Theorem \ref{thm:Spc(Stable(A)^c)}. Consider the following commutative diagram
        \begin{center}
            \begin{tikzcd}
                \displaystyle\colim_{E \in \mathcal{Q}_0} \Specinv(\Ext_E^{**}(\FF_2, \FF_2)) \ar[r] \ar[d, hook] & \Specinv(\Ext_D^{**}(\FF_2, \FF_2)) \ar[d, hook] \\
                \displaystyle\colim_{E \in \mathcal{Q}_0} \Spec^h(\Ext_E^{**}(\FF_2, \FF_2)) \ar[r, "\cong"] \ar[d, two heads] & \Spec^h(\Ext_D^{**}(\FF_2, \FF_2)) \ar[d, two heads] \\
                \displaystyle\colim_{E \in \mathcal{Q}_0} \Specinv(\Ext_E^{**}(\FF_2, \FF_2)) \ar[r] & \Specinv(\Ext_D^{**}(\FF_2, \FF_2)),
            \end{tikzcd}
        \end{center}
    where the middle horizontal map is a homeomorphism by Proposition 4.2 of \cite{pal-quillen}. By the top square, the desired map is a homeomorphism onto its image, and by the bottom square it is surjective.
\end{proof}

\begin{rem}
    Note that $\Specinv(\Ext_D^{**}(\FF_2, \FF_2))$ is a quotient of $\Spc(\Stable(D)^c) \cong \Spec^h(\Ext_D^{**}(\FF_2, \FF_2))$. It would be nice to describe this tt-geometrically in terms of some sort of ``coaction'' of $A \cotensor_D \FF_2$ on the tt-category $\Stable(D)^c$. This seems to be related to the idea of descent and Hopf-Galois extensions (e.g., see \cite{bchnps-descent} and \cite{rognes-galois}).
\end{rem}

Lastly, we show that the comparison map for $\Stable(A)^c$ is \emph{not} a homeomorphism. More precisely, we have the following result.

\begin{prop}
    The comparison map
        \[\rho_A : \Spc(\Stable(A)^c) \to \Spec^h(\Ext_A^{**}(\FF_2, \FF_2))\]
    is surjective but not injective.
\end{prop}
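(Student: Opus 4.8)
The plan is to verify surjectivity and non-injectivity separately, both by leveraging the machinery already in place: the homeomorphism $\widetilde\varphi_A : \Specinv(\Ext_D^{**}(\FF_2,\FF_2)) \xrightarrow{\ \sim\ } \Spc(\Stable(A)^c)$ from Theorem \ref{thm:Spc(Stable(A)^c)}, and Palmieri's Quillen stratification $F$-isomorphism $\Ext_A^{**}(\FF_2,\FF_2) \to \Ext_D^{**}(\FF_2,\FF_2)^{A\cotensor_D\FF_2}$. For surjectivity, I would identify $\rho_A$ with the composite $\Spc(\Stable(A)^c) \cong \Specinv(\Ext_D^{**}(\FF_2,\FF_2)) \to \Spec^h(\Ext_A^{**}(\FF_2,\FF_2))$, where the second map sends an invariant prime $\p$ to its preimage $\res_{A,D}^{-1}(\p)$ under $\res_{A,D} : \Ext_A^{**}(\FF_2,\FF_2) \to \Ext_D^{**}(\FF_2,\FF_2)$. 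One checks this composite agrees with $\rho_A$ by the naturality of the comparison map applied to $\res_{A,D}$ together with the fact that $\rho_D$ is an isomorphism (the Corollary after Theorem \ref{thm:Spc(Stable(D)^c)}). Since an $F$-isomorphism of graded rings induces a homeomorphism on $\Spec^h$, and since the invariant primes of $\Ext_D^{**}(\FF_2,\FF_2)$ surject onto $\Spec^h$ of the ring of invariants $\Ext_D^{**}(\FF_2,\FF_2)^{A\cotensor_D\FF_2}$ by the contraction map (a standard fact for invariants under a Hopf-algebra coaction, cf.\ the results of \cite{landweber-associated} already cited), the composite is surjective; hence $\rho_A$ is surjective.

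For non-injectivity, the idea is that the invariant prime spectrum of $\Ext_D^{**}(\FF_2,\FF_2)$ is genuinely larger than $\Spec^h$ of the ring of invariants: distinct invariant primes can contract to the same prime of $\Ext_A^{**}(\FF_2,\FF_2)$. Concretely, I would exhibit two distinct invariant prime ideals $\p_1 \neq \p_2$ in $\Ext_D^{**}(\FF_2,\FF_2) \cong \FF_2[h_{ts} : s<t]/(h_{ts}h_{vu} : t\le u)$ (up to nilpotents) with $\res_{A,D}^{-1}(\p_1) = \res_{A,D}^{-1}(\p_2)$. The cleanest candidates come from the orbit structure of the $A\cotensor_D\FF_2 \cong \FF_2[\xi_1^2, \xi_2^4, \ldots]$-coaction: the coaction formula $h_{ts} \mapsto \sum \zeta_j^{2^s}\xi_{t-i-j}^{2^{i+j+s}} \otimes h_{i,j+s}$ shows that within a fixed "row" (fixed value of $t-s$, or appropriate invariant), the various $h_{ts}$ get mixed, so that the minimal invariant primes containing one such generator versus another can have the same image in the invariants. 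The simplest source of non-injectivity: there exist non-nilpotent elements of $\Ext_D^{**}$ (e.g.\ certain $h_{ts}$ with $s>0$) all of whose orbit-sums lie in a single prime of the invariants, so the zero ideal's image and the image of a nonzero minimal invariant prime over such an element coincide — or, more robustly, two incomparable minimal primes over a single orbit. I would make this precise by picking the smallest explicit example, e.g.\ comparing the invariant primes $(h_{21})$ and a suitable conjugate, and computing their contractions to $\Ext_A^{**}$ directly using Palmieri's coaction formula and the known low-degree structure.

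The main obstacle is the last step: producing an explicit pair of distinct invariant primes with equal contraction, which requires a concrete enough grip on the $A\cotensor_D\FF_2$-orbits in $\Spec^h(\Ext_D^{**}(\FF_2,\FF_2))$ and on the image of $\res_{A,D}$. A cleaner route that sidesteps explicit element-chasing: argue by a cardinality/dimension or topological obstruction — e.g.\ show that $\Specinv(\Ext_D^{**}(\FF_2,\FF_2))$ is not $T_0$ as a quotient of $\Spec^h$, or that $\rho_A$ collapses a nontrivial specialization, using that the coaction has infinitely many orbits colliding under contraction because $\Ext_A^{**}(\FF_2,\FF_2)$ is "too small" (its Krull dimension, or its set of minimal primes, is strictly smaller than expected). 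If a fully explicit example proves unwieldy, I would instead invoke the general principle that for a non-surjective-on-geometric-points quotient presentation — here $A\cotensor_D\FF_2$ is an infinite-dimensional group scheme with positive-dimensional stabilizers acting on $\Spec^h(\Ext_D^{**})$ — the map from the quotient space to $\Spec^h$ of the invariants is not injective, and pin down one witnessing pair of points. I expect the coaction formula, combined with the multiplicative relations $h_{ts}h_{vu}=0$ for $t\le u$, to make at least one small example tractable (degrees in the range of $h_{21}, h_{31}, h_{32}$).
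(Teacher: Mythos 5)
Your surjectivity argument is essentially the paper's: the paper runs the naturality square for $\rho$ applied to $\res_{A,D}$, uses that $\rho_D$ is a homeomorphism, and quotes Lemma 5.6 of \cite{pal-quillen} for surjectivity of $\res_{A,D}^* : \Spec^h(\Ext_D^{**}(\FF_2,\FF_2)) \to \Spec^h(\Ext_A^{**}(\FF_2,\FF_2))$; your version, which routes through $\Specinv$ and a claimed surjectivity of the contraction map restricted to invariant primes, needs the extra observation that $\res_{A,D}^*(\p) = \res_{A,D}^*(\p^*)$ (which does follow from Lemma \ref{lem:quotient} plus naturality), but is otherwise sound.

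The non-injectivity half has a genuine gap: you describe several strategies for finding two distinct invariant primes with equal contraction, but you never actually produce the pair, and the argument stands or falls on that single example. Moreover, some of your fallback routes would fail. The suggestion that $\Specinv(\Ext_D^{**}(\FF_2,\FF_2))$ might not be $T_0$ contradicts Proposition \ref{prop:spectral} (it is spectral, hence $T_0$), and a cardinality or Krull-dimension obstruction is not obviously available since $\Ext_A^{**}(\FF_2,\FF_2)$ is only known up to $F$-isomorphism. Your candidate $(h_{21})$ is also problematic: in $\FF_2[h_{ts} : s<t]/(h_{ts}h_{vu} : t \le u)$ one has, e.g., $h_{10}h_{31} = 0 \in (h_{21})$ with neither factor in $(h_{21})$, so it is not prime. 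The paper's example is cleaner and avoids all of this: work with $E(0) = A/(\xi_1^2,\xi_2^2,\ldots)$ and set $\p_1 = \res_{D,E(0)}^*((h_{10}))$, $\p_2 = \res_{D,E(0)}^*((h_{10},h_{20}))$. These are distinct because $\res_{D,E(0)}$ is surjective up to nilpotents (so $\res_{D,E(0)}^*$ is injective), and both contract to the maximal ideal of $\Ext_A^{**}(\FF_2,\FF_2)$ because $\res_{A,E(0)}$ factors through the invariant subring $\Ext_{E(0)}^{**}(\FF_2,\FF_2)^{A\cotensor_{E(0)}\FF_2} \cong \FF_2[h_{10}]$, so any prime of $\Ext_{E(0)}^{**}(\FF_2,\FF_2)$ containing $h_{10}$ pulls back to the maximal ideal. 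To complete your proof you would need to supply such a concrete pair; as written, the key step is a plan rather than an argument.
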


\begin{proof}
    To see that $\rho_A$ is surjective, note that the following diagram commutes by naturality of comparison maps
        \begin{center}
            \begin{tikzcd}
                \Spc(\Stable(D)^c) \ar[r, "\res_{A,D}^*"] \ar[d, "\rho_D"'] & \Spc(\Stable(A)^c) \ar[d, "\rho_A"] \\
                \Spec^h(\Ext_D^{**}(\FF_2, \FF_2)) \ar[r, "\res_{A,D}^*"] & \Spec^h(\Ext_A^{**}(\FF_2, \FF_2)).
            \end{tikzcd}
        \end{center}
    The bottom horizontal map is surjective by Lemma 5.6 of \cite{pal-quillen} and $\rho_D$ is a homeomorphism by Theorem \ref{thm:Spc(Stable(D)^c)}, therefore $\rho_A$ is surjective.
    
    To see that $\rho_A$ is not injective, notice that after examining the following commutative diagram
        \begin{center}
            \begin{tikzcd}
                \Spc(\Stable(D)^c) \ar[rr, "\Spc(\res_{A,D})"] \ar[dd, "\rho_D"'] & & \Spc(\Stable(A)^c) \ar[dd, "\rho_A"] \\
                & \Specinv(\Ext_D^{**}(\FF_2, \FF_2)) \ar[ru, "\cong"] & \\
                \Spec^h(\Ext_D^{**}(\FF_2, \FF_2)) \ar[ru] \ar[rr, "\res_{A,D}^*"] & & \Spec^h(\Ext_A^{**}(\FF_2, \FF_2)),
            \end{tikzcd}
        \end{center}
    we may identify $\rho_A$ with the composition
        \[\Specinv(\Ext_D^{**}(\FF_2, \FF_2)) \hookrightarrow \Spec^h(\Ext_D^{**}(\FF_2, \FF_2)) \xrightarrow{\res_{A,D}^*} \Spec^h(\Ext_A^{**}(\FF_2, \FF_2)).\]
    Now recall the elementary quotient Hopf algebra $E(0) = A/(\xi_1^2, \xi_2^2, \xi_3^2, \ldots)$. The quotient map $D \twoheadrightarrow E$ induces a map of Hopf algebroids, which in turn induces a map on $\Specinv$, resulting in the following commutative diagram
        \begin{center}
            \begin{tikzcd}[column sep = 0.26in]
                \Specinv(\Ext_{E(0)}^{**}(\FF_2, \FF_2)) \ar[r] \ar[d, hook] & \Specinv(\Ext_D^{**}(\FF_2, \FF_2)) \ar[d, hook] \ar[rd] \\
                \Spec^h(\Ext_{E(0)}^{**}(\FF_2, \FF_2)) \ar[r, "\res_{D,E(0)}^*"'] & \Spec^h(\Ext_D^{**}(\FF_2, \FF_2)) \ar[r, "\res_{A,D}^*"'] & \Spec^h(\Ext_A^{**}(\FF_2, \FF_2)).
            \end{tikzcd}
        \end{center}
    Consider $\p_1 = \res_{D,E(0)}^*((h_{10})), \p_2 = \res_{D,E(0)}^*((h_{10}, h_{20})) \in \Specinv(\Ext_D^{**}(\FF_2, \FF_2))$. Recall from the proof of Proposition \ref{prop:res-of-loc} that the map $\res_{D,E(0)}$ is ``surjective up to nilpotents'' (i.e., for every $y$ in the codomain, $y^{2^n}$ is in the image for some $n$), so $\res_{D,E(0)}^*$ is injective and hence $\p_1 \neq \p_2$. On the other hand, the map $\res_{A,E(0)} : \Ext_A^{**}(\FF_2, \FF_2) \to \Ext_{E(0)}^{**}(\FF_2, \FF_2)$ factors through the invariants
        \[\Ext_{E(0)}^{**}(\FF_2, \FF_2)^{A \cotensor_{E(0)} \FF_2} \cong \FF_2[h_{10}]\]
    (a fact which was probably known to Adams sometime in the 1960s), so the preimage of any ideal in $\Ext_{E(0)}^{**}(\FF_2, \FF_2)$ containing $h_{10}$ is the maximal ideal in $\Ext_A^{**}(\FF_2, \FF_2)$. In particular,
        \[\res_{A,D}^*(\p_1) = \res_{A,E(0)}^*((h_{10})) = \res_{A,E(0)}^*((h_{10}, h_{20})) = \res_{A,D}^*(\p_2).\]
    Thus, $\rho_A$ is not injective.
\end{proof}


\appendix

\section{Invariant prime ideals of a Hopf algebroid}

Let $(A, \Gamma)$ be a bigraded commutative Hopf algebroid over a field $k$. We will make the following additional assumptions:
    \begin{itemize}
        \item $k$ is of characteristic 2 (this assumption can be removed if $A$ is concentrated in even degrees).
        \item $(A, \Gamma)$ is connected, i.e., $\Gamma_{ij} = 0$ for $i < 0$ or $j < 0$, and $\Gamma_{00} \cong A$.
        \item $\Gamma$ is of finite type, i.e., $\Gamma_{ij}$ is finite-dimensional over $k$ for each $i,j$.
    \end{itemize}
The standard reference for Hopf algebroids is Appendix A1 of \cite{ravenel-complex}.

\begin{rem}
    We will primarily be interested in split Hopf algebroids, which arise in the following manner. Given a $k$-algebra $A$ and a commutative Hopf algebra $H$ over $k$ such that $A$ has a right $H$-coaction, there is a Hopf algebroid $(A, A \otimes H)$ whose left unit $\eta_L$ is given by $a \mapsto a \otimes 1$ and whose right unit $\eta_R : A \to A \otimes H$ is given by the $H$-coaction on $A$.
\end{rem}

\begin{defn}\label{defn:invariant-ideal}
    A homogeneous ideal $I$ in $A$ is \emph{invariant} if $\eta_R(I) \subseteq I \Gamma$, where $\eta_R : A \to \Gamma$ is the right unit map of $(A, \Gamma)$.
\end{defn}

Note that if $(A, \Gamma) \cong (A, A \otimes H)$ is a split Hopf algebroid coming from a right coaction $\psi : A \to A \otimes H$, then an ideal $I$ in $A$ is invariant if and only if $\psi(I) \subseteq I \otimes H$.

\begin{defn}\label{defn:Specinv}
    Let $\Specinv(A)$ denote the subspace of the bihomogeneous Zariski spectrum $\Spec^h(A)$ consisting of the invariant prime ideals in $A$. Given a homogeneous ideal $I$ in $A$, let
        \[V^\textup{inv}(I) = V(I) \cap \Specinv(A).\]
\end{defn}

In \cite{landweber-associated}, Landweber looks at operations which convert homogeneous ideals into invariant ideals for certain types of split Hopf algebroids. These operations extend easily to non-split Hopf algebroids.

\begin{defn}
    Given a homogeneous ideal $I$ in $A$, let $I^*$ denote the largest invariant ideal in $A$ contained in $I$ and let $I^\sharp$ denote the smallest invariant ideal in $A$ containing $I$.
\end{defn}

\begin{prop}\label{prop:invariant-existence}
    Suppose $I$ is a homogeneous ideal in $A$.
        \begin{enumerate}[(i)]
            \item $I^*$ exists and is given explicitly by
                \[I^* = \{x \in I : \eta_R(x) \in I \Gamma\}.\]
            \item $I^\sharp$ exists and can be described as follows. Fix a homogeneous basis $\{\gamma_i\}$ for $\Gamma$, and for each $x \in I$ write $\eta_R(x) = \sum_i x_i \gamma_i$. Then $I^\sharp$ is generated by all of the $x_i$ as $x$ ranges over (a generating set for) $I$.
        \end{enumerate}
\end{prop}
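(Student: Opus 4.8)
The plan is to prove both parts in parallel, since each reduces to the same single non-formal ingredient: coassociativity of $A$ viewed as a left $\Gamma$-comodule via $\eta_R$, combined with freeness of $\Gamma$ over $A$ (which is what lets us extract individual coordinates relative to the chosen basis $\{\gamma_i\}$). Everything else is bookkeeping, streamlined by two preliminary observations. First, since $\Gamma$ is commutative, $I\Gamma := \eta_L(I)\cdot\Gamma$ is genuinely an ideal of $\Gamma$ for any ideal $I$ of $A$; hence if $\eta_R(x)\in I\Gamma$ then $\eta_R(ax) = \eta_R(a)\eta_R(x)\in\Gamma\cdot I\Gamma = I\Gamma$, so in every argument below "closed under multiplication by $A$" is automatic once additivity of $\eta_R$ is noted. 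Second, because $\{\gamma_i\}$ is an $A$-basis of $\Gamma$ and $\eta_L$ is injective, $I\Gamma = \bigoplus_i\eta_L(I)\gamma_i$; thus for $\gamma\in\Gamma$ written uniquely as $\gamma = \sum_i\eta_L(y_i)\gamma_i$ one has $\gamma\in I\Gamma$ iff every $y_i\in I$.

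For (i), existence and the explicit formula come out together: putting $I' = \{x\in I:\eta_R(x)\in I\Gamma\}$, I would show $I'$ is the largest invariant ideal contained in $I$ by checking (a) $I'$ is an ideal contained in $I$, via additivity of $\eta_R$ and the multiplication observation; (b) every invariant ideal $J\subseteq I$ lies in $I'$, since $x\in J$ gives $\eta_R(x)\in J\Gamma\subseteq I\Gamma$; and (c) $I'$ is itself invariant. Given (a)--(c), $I'$ is an invariant ideal contained in $I$ that contains every such, so $I^*$ exists and equals $I'$. The crux is (c): for $x\in I'$ write $\eta_R(x) = \sum_i\eta_L(x_i)\gamma_i$ with all $x_i\in I$; applying the coproduct to this identity and invoking coassociativity of the $\eta_R$-coaction on $A$ produces a relation in $\Gamma\otimes_A\Gamma$ which, upon comparing coordinates against the $A$-basis $\{\gamma_i\otimes_A\gamma_j\}$, forces $\eta_R(x_i)\in I\Gamma$, i.e.\ each $x_i\in I'$; hence $\eta_R(x) = \sum_i\eta_L(x_i)\gamma_i\in I'\Gamma$. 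In the split case $(A, A\otimes H)$ this is transparent: with $\{h_i\}$ a homogeneous $k$-basis of $H$ and $\psi(x) = \sum_i x_i\otimes h_i$, coassociativity reads $\sum_i\psi(x_i)\otimes h_i = \sum_i x_i\otimes\Delta(h_i)\in I\otimes H\otimes H$, and pairing the last factor with $h_i^\vee$ gives $\psi(x_i)\in I\otimes H$.

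For (ii), let $J$ be the ideal generated by all coordinates $x_i$ arising from $\eta_R(x) = \sum_i\eta_L(x_i)\gamma_i$ as $x$ runs over a fixed generating set of $I$. I would check (a) $I\subseteq J$, by applying the counit $\epsilon\colon\Gamma\to A$ — a ring map with $\epsilon\eta_R = \epsilon\eta_L = \id_A$ — to the defining identity, yielding $x = \sum_i x_i\epsilon(\gamma_i)\in J$ for each generator; (b) $J\subseteq K$ for every invariant ideal $K\supseteq I$, since each generator $x$ then lies in $K$, so $\eta_R(x)\in K\Gamma = \bigoplus_i\eta_L(K)\gamma_i$, whence every $x_i\in K$ by the coordinate observation; and (c) $J$ is invariant. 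Items (a)--(c) say $J$ is an invariant ideal containing $I$ and contained in every such, so $I^\sharp$ exists, equals $J$, and is independent of the choices. Again (c) is the substantive step: applying the coproduct to $\eta_R(x) = \sum_i\eta_L(x_i)\gamma_i$ for a generator $x$ and using coassociativity expresses each $\eta_R(x_l)$ as $\sum_j\bigl(\sum_i c^i_{jl}x_i\bigr)\gamma_j$, where the $c^i_{jl}\in k$ are the structure constants of $\Delta$ on $\{\gamma_i\}$ (finitely many nonzero, by finite type); since each inner sum lies in $J$, this gives $\eta_R(x_l)\in J\Gamma$, hence $\eta_R(J)\subseteq J\Gamma$.

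The step I expect to be the main obstacle is exactly the invariance verifications (i)(c) and (ii)(c) in the stated non-split generality: one must make the coassociativity bookkeeping precise — identifying $A$ with the $\Gamma$-comodule $\Gamma\otimes_A A$, matching $(\Delta\otimes_A\id)\circ\eta_R$ with $(\id\otimes_A\eta_R)\circ\eta_R$, and unwinding both sides against the $A$-basis of $\Gamma$ (hence of $\Gamma\otimes_A\Gamma$) — and one must justify extracting coordinates, for which connectedness and finite type are what guarantee the relevant graded pieces are finite-dimensional over $k$ and the $\gamma_i$ restrict to honest finite bases. Modulo that, the arguments are the elementary manipulations indicated above; the split case, which is all the main text uses, is entirely straightforward.
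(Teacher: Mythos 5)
Your proposal is correct and follows the same route as the paper's (much terser) proof: both parts reduce to the coassociativity identity for $\eta_R$ together with coordinate extraction against an $A$-basis of $\Gamma$ (via $\eta_L$), which is exactly what the paper invokes. The only slip is that the structure constants $c^i_{jl}$ of $\Delta$ on $\{\gamma_i\}$ lie in $A$ rather than $k$ in general, but this changes nothing since $J$ is an ideal of $A$ and the inner sums still land in $J$.
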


\begin{proof}
    For part (i), note that $\{x \in I : \eta_R(x) \in I \Gamma\}$ is an ideal since $\eta_R$ is $A$-linear, and it is invariant since $(\eta_R \otimes \id_\Gamma) \circ \eta_R = (\id_A \otimes \Delta ) \circ \eta_R$ as maps $A \to A \otimes_A \Gamma \otimes_A \Gamma$. If $I'$ is an any other invariant ideal contained in $I$, then $\eta_R(x) \in I' \Gamma \subseteq I \Gamma$ for every $x \in I'$. Thus, $\{x \in I : \eta_R(x) \in I \Gamma\}$ is the largest invariant ideal contained in $I$.

    For part (ii),
    the given description defines a homogeneous ideal which is invariant for similar reasons as above, and it is clearly the smallest possible invariant ideal that contains $I$.
\end{proof}

\begin{lem}\label{lem:invariant-properties}
        \begin{enumerate}[(i)]
            \item If $I$ is a homogeneous ideal in $A$, then $I$ is invariant if and only if $I = I^*$ if and only if $I = I^\sharp$.
            \item If $I \subseteq J$ are homogeneous ideals in $A$, then $I^* \subseteq J^*$ and $I^\sharp \subseteq J^\sharp$.
            \item If $\{I_\alpha\}$ is a collection of invariant ideals in $A$, then $\bigcap_\alpha I_\alpha$ is invariant.
            \item If $I$ and $J$ are invariant ideals in $A$, then $I + J$ is invariant.
            \item If $I$ and $J$ are invariant ideals in $A$, then $IJ$ is invariant.
            \item If $I$ is an invariant ideal in $A$, then $\sqrt{I}$ is the intersection of all invariant prime ideals containing $I$, which is therefore invariant.
            \item If $I$ is a finitely generated homogeneous ideal in $A$, then $I^\sharp$ is finitely generated.
        \end{enumerate}
\end{lem}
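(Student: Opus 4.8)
The plan is to derive (i)--(v) and (vii) directly from Proposition~\ref{prop:invariant-existence} and the defining condition $\eta_R(I)\subseteq I\Gamma$, leaving (vi) --- where invariant primes must actually be produced --- as the only part requiring real work.

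For the formal parts I would argue as follows. Part (i): if $I$ is invariant then $I$ is an invariant ideal lying between $I$ and $I$, so by the extremality built into the definitions of $I^*$ and $I^\sharp$ we get $I^*=I=I^\sharp$; conversely $I^*$ and $I^\sharp$ are always invariant by Proposition~\ref{prop:invariant-existence}, so $I=I^*$ or $I=I^\sharp$ forces $I$ to be invariant. Part (ii): $I^*$ is an invariant ideal contained in $I\subseteq J$, hence $I^*\subseteq J^*$ by maximality of $J^*$, and dually $I^\sharp\subseteq J^\sharp$ by minimality of $J^\sharp$ (both also follow at once from the explicit descriptions in Proposition~\ref{prop:invariant-existence}). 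For (iii)--(v) I would fix a homogeneous left $A$-module basis $\{\gamma_i\}$ for $\Gamma$, so that $I\Gamma=\bigoplus_i I\gamma_i$ for every homogeneous ideal $I$; then the identity $\bigcap_\alpha(I_\alpha\Gamma)=(\bigcap_\alpha I_\alpha)\Gamma$ gives (iii), additivity of $\eta_R$ gives $\eta_R(I+J)\subseteq I\Gamma+J\Gamma=(I+J)\Gamma$ for (iv), and $\eta_R$ being a ring homomorphism together with $(I\Gamma)(J\Gamma)=(IJ)\Gamma$ (using that $\Gamma$ is commutative and $\eta_L$ is a ring map) gives $\eta_R(IJ)\subseteq(IJ)\Gamma$ for (v). Part (vii): writing $I=(x_1,\dots,x_n)$ with the $x_j$ homogeneous, Proposition~\ref{prop:invariant-existence}(ii) describes $I^\sharp$ as generated by the $\gamma_i$-coefficients of $\eta_R(x_1),\dots,\eta_R(x_n)$; since $\Gamma$ is of finite type and each $\eta_R(x_j)$ is homogeneous, only finitely many coefficients occur, so $I^\sharp$ is finitely generated.

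The substantive part is (vi). One inclusion is immediate: any invariant prime $\p\supseteq I$ satisfies $\sqrt I\subseteq\sqrt\p=\p$, so $\sqrt I$ is contained in the intersection of all such $\p$; and once the reverse inclusion is in hand, $\sqrt I$ is an intersection of invariant ideals, hence invariant by (iii). For the reverse inclusion it suffices to show that for each homogeneous $f\notin\sqrt I$ there is an invariant prime $\p$ with $I\subseteq\p$ and $f\notin\p$. Following Landweber~\cite{landweber-associated}, I would apply Zorn's lemma to the poset of homogeneous invariant ideals $J$ with $I\subseteq J$ and $f^n\notin J$ for all $n\ge 0$: it is nonempty because it contains $I$ (as $f\notin\sqrt I$), and it is closed under directed unions since a directed union of invariant ideals is again invariant, so it has a maximal element $\p$, which is proper since $1=f^0\notin\p$. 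To see $\p$ is prime, take homogeneous $a,b\notin\p$ and form $(\p+(a))^\sharp$ and $(\p+(b))^\sharp$, which by Proposition~\ref{prop:invariant-existence}(ii) are invariant ideals strictly larger than $\p$ and still containing $I$; by maximality each meets $\{f^n\}$, say $f^m\in(\p+(a))^\sharp$ and $f^\ell\in(\p+(b))^\sharp$. If $ab\in\p$, then $(\p+(a))(\p+(b))\subseteq\p$, and the crux is to upgrade this to $(\p+(a))^\sharp(\p+(b))^\sharp\subseteq\p$, which would force $f^{m+\ell}\in\p$, a contradiction; hence $ab\notin\p$, so $\p$ is prime with $I\subseteq\p$ and $f\notin\p$, as required.

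I expect the main obstacle to be exactly this last step. The operation $(-)^\sharp$ replaces an element by the coefficients of its $\eta_R$-image, and the coefficients of a product $\eta_R(c)\eta_R(d)$ are $A$-linear combinations of the structure constants of the products $\gamma_i\gamma_j$ applied to products of the individual coefficients of $\eta_R(c)$ and $\eta_R(d)$ --- not those products themselves --- so $(\p+(a))(\p+(b))\subseteq\p$ does not upgrade formally to its saturated version. Making this go through is where the standing hypotheses on $(A,\Gamma)$ are used --- connectedness and finite type, and in particular the triangularity of $\eta_R$ relative to the coaction grading, so that $\eta_R$ agrees with $\eta_L$ modulo the augmentation coideal --- and it is precisely here that one invokes, and extends to the possibly non-split setting, the argument of Landweber~\cite{landweber-associated}.
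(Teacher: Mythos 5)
Parts (i)--(v) and (vii) of your argument are fine and essentially match the paper's (terse) proof: the paper declares (i)--(iii) evident, derives (iv)--(v) from $\eta_R$ being an algebra map, and gets (vii) from Proposition~\ref{prop:invariant-existence}(ii) plus connectedness and finite type, exactly as you do. The problem is (vi), where you have a genuine, and indeed self-acknowledged, gap at the decisive step. Your Zorn's lemma argument produces a maximal invariant ideal $\p \supseteq I$ missing the powers of $f$, but primality of $\p$ hinges on upgrading $(\p+(a))(\p+(b))\subseteq\p$ to $(\p+(a))^\sharp(\p+(b))^\sharp\subseteq\p$, and as you yourself observe this is false as a formal implication: if $\eta_R(a)=\sum_i a_i\gamma_i$ and $\eta_R(b)=\sum_j b_j\gamma_j$, then $ab\in\p$ only tells you that certain $A$-linear combinations $\sum_{i,j}c^k_{ij}a_ib_j$ (with $c^k_{ij}$ the structure constants of $\gamma_i\gamma_j$) lie in $\p$, not that the individual products $a_ib_j$ do. Saying that ``one invokes and extends the argument of Landweber'' at this point is not a proof; the triangularity argument that makes such statements work is precisely the mathematical content being asked for, and it is not supplied.

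For comparison, the paper's proof of (vi) avoids Zorn's lemma entirely. The easy inclusion is $\sqrt I=\bigcap_{\p\in V(I)}\p\subseteq\bigcap_{\p\in V^{\textup{inv}}(I)}\p$; for the reverse, one uses that for every homogeneous prime $\p\supseteq I$ the ideal $\p^*$ is again \emph{prime} (this is the unnumbered lemma in Section~6, citing Landweber's Proposition~4.2(a)), is invariant, contains $I$ (because $I$ is invariant), and is contained in $\p$. Hence $\bigcap_{\p\in V^{\textup{inv}}(I)}\p=\bigcap_{\p\in V(I)}\p^*\subseteq\bigcap_{\p\in V(I)}\p=\sqrt I$. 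The single nontrivial input is thus ``$\p$ prime $\Rightarrow$ $\p^*$ prime,'' which is exactly Landweber's statement and is where the connectedness and finite-type hypotheses enter (via the triangularity of $\eta_R$ with respect to the grading on $\Gamma$). If you import that fact as a black box, your Zorn construction becomes unnecessary: given homogeneous $f\notin\sqrt I$, pick any homogeneous prime $\p\supseteq I$ with $f\notin\p$ and replace it by $\p^*$. As written, however, your proof of (vi) is incomplete, and the step you would need is not the statement Landweber actually proves.
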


\begin{proof}    
    Parts (i)-(iii) are evident. Parts (iv) and (v) follow from the fact that $\eta_R$ is a map of algebras.
    
    For part (vi), first note that $\sqrt{I} = \bigcap_{\p \in V(I)} \p \subseteq \bigcap_{\p \in V^\textup{inv}(I)} \p$.
    Since $I$ is invariant, $I \subseteq \p$ if and only if $I \subseteq \p^*$ for any $\p \in \Spec^h(A)$, hence
        \[\bigcap_{\p \in V^\textup{inv}(I)} \p = \bigcap_{\p \in V(I)} \p^* \subseteq \bigcap_{\p \in V(I)} \p = \sqrt{I}.\]

    Part (vii) follows from the description of $I^\sharp$ given in part (ii) of Proposition \ref{prop:invariant-existence} and the assumptions that $(A, \Gamma)$ is connected and $\Gamma$ is of finite type.
\end{proof}

\begin{rem}
    Note that if $I$ is a homogeneous ideal in $A$ and $\p \in \Specinv(A)$, then $I \subseteq \p$ if and only if $I^\sharp \subseteq \p$ and therefore
        \[V^\textup{inv}(I^\sharp) = V^\textup{inv}(I).\]
    As a result, every closed subset of $\Specinv(A)$ can be written as $V^\textup{inv}(I)$ for some invariant ideal $I$ in $A$.
\end{rem}

Recall the following definition.

\begin{defn}
    A topological space is \emph{spectral} if it is quasi-compact and $T_0$, it has a basis of quasi-compact open subsets which is closed under finite intersections, and every irreducible closed subset has a unique generic point.
\end{defn}

We will show that $\Specinv(A)$ is spectral through a series of lemmas. We start by characterizing the quasi-compact open subsets of $\Specinv(A)$.

\begin{lem}\label{lem:qc-opens}
    An open subset $U \subseteq \Specinv(A)$ is quasi-compact if and only if
        \[U = \Specinv(A) \ssm V^\textup{inv}(I)\]
    for some finitely generated invariant ideal $I$ in $A$.
\end{lem}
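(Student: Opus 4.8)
The plan is to prove both directions of the biconditional separately, using the standard correspondence between quasi-compact open subsets and finitely generated ideals that holds in ordinary affine schemes, adapted to the invariant setting.

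\textbf{The ``if'' direction.} Suppose $I$ is a finitely generated invariant ideal, say $I = (f_1, \ldots, f_k)$ with each $f_j$ homogeneous. I would show $U = \Specinv(A) \ssm V^\textup{inv}(I)$ is quasi-compact by exhibiting it as a finite union of the basic open sets $D^\textup{inv}(f) = \Specinv(A) \ssm V^\textup{inv}((f))$ for $f$ homogeneous. Concretely, since a prime $\p \in \Specinv(A)$ contains $I$ if and only if it contains some $f_j$ is false in general — rather, $\p \supseteq I$ iff $\p \supseteq (f_j)$ for \emph{all} $j$ — we get $V^\textup{inv}(I) = \bigcap_j V^\textup{inv}((f_j))$, hence $U = \bigcup_{j=1}^k D^\textup{inv}(f_j)$. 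So it suffices to observe that each $D^\textup{inv}(f)$ is quasi-compact. This should follow because $D^\textup{inv}(f) = D(f) \cap \Specinv(A)$ is the intersection of the quasi-compact open $D(f) \subseteq \Spec^h(A)$ with the subspace $\Specinv(A)$; one needs that $\Specinv(A)$ carries the subspace topology and that $D^\textup{inv}(f)$ has no proper open cover, which can be checked by a direct argument (a cover by basic opens $D^\textup{inv}(g_\alpha)$ gives $f \in \sqrt{(g_\alpha : \alpha)^\sharp}$ after taking invariant hulls, and finitely many $g_\alpha$ already suffice since radicals of invariant ideals are intersections of invariant primes by Lemma~\ref{lem:invariant-properties}(vi)). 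A clean alternative: since the retraction $\Spec^h(A) \to \Specinv(A)$, $\p \mapsto \p^*$, is continuous and the preimage of $V^\textup{inv}(I)$ is $V(I)$ (shown earlier), the preimage of $U$ is the quasi-compact open $\Spec^h(A) \ssm V(I)$, and a continuous surjective image of a quasi-compact space is quasi-compact.

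\textbf{The ``only if'' direction.} Suppose $U \subseteq \Specinv(A)$ is quasi-compact and open. Write its complement as $V^\textup{inv}(J)$ for some invariant ideal $J$ (using the remark after Lemma~\ref{lem:invariant-properties} that every closed subset has this form). Now $U = \bigcup_\alpha D^\textup{inv}(f_\alpha)$ for some collection of homogeneous $f_\alpha$, because the $D^\textup{inv}(f)$ form a basis; indeed, write $J = \sum_\alpha (f_\alpha)$ where $\{f_\alpha\}$ ranges over a homogeneous generating set, so $V^\textup{inv}(J) = \bigcap_\alpha V^\textup{inv}((f_\alpha))$ and $U = \bigcup_\alpha D^\textup{inv}(f_\alpha)$. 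By quasi-compactness, finitely many suffice: $U = D^\textup{inv}(f_1) \cup \cdots \cup D^\textup{inv}(f_k)$. Setting $I_0 = (f_1, \ldots, f_k)$ and $I = I_0^\sharp$, I claim $U = \Specinv(A) \ssm V^\textup{inv}(I)$: since $V^\textup{inv}(I) = V^\textup{inv}(I_0) = \bigcap_j V^\textup{inv}((f_j))$, this is immediate. Finally $I = I_0^\sharp$ is finitely generated by Lemma~\ref{lem:invariant-properties}(vii), and it is invariant by construction, completing the proof.

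\textbf{Expected main obstacle.} The subtle point is verifying that the finitely many $D^\textup{inv}(f_j)$ extracted from quasi-compactness genuinely recover $U$ at the level of \emph{invariant} ideals rather than merely up to radical in $\Spec^h(A)$ — i.e., making sure the passage from a generating set of $J$ to a finite subset, and then to the invariant hull $I_0^\sharp$, does not enlarge $V^\textup{inv}$. This works because invariant radical ideals are determined by the invariant primes containing them (Lemma~\ref{lem:invariant-properties}(vi)), so $V^\textup{inv}(I) = V^\textup{inv}(J)$ as soon as $\sqrt{I^\sharp}$ and $\sqrt{J}$ have the same invariant primes above them, which the finite subcover guarantees. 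The other mild technical point is confirming that $\Specinv(A)$ really carries the subspace topology from $\Spec^h(A)$ (so that ``quasi-compact open'' has the expected meaning) — but this is essentially the content of Definition~\ref{defn:Specinv} together with the earlier lemma that $\p \mapsto \p^*$ is a retraction.
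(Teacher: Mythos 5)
Your proof is correct, and its main line is essentially the paper's own argument: both directions come down to the facts that $\sqrt{K}$ is the intersection of the invariant primes containing $K$ when $K$ is invariant (Lemma \ref{lem:invariant-properties}(vi)) and that $(-)^\sharp$ preserves finite generation (Lemma \ref{lem:invariant-properties}(vii)). The differences are organizational: for the ``if'' direction the paper covers $\Specinv(A) \ssm V^\textup{inv}(I)$ by arbitrary opens $\Specinv(A) \ssm V^\textup{inv}(I_\alpha)$ and extracts a finite subcover in one step by placing each $x_i^{m_i}$ into a finite sum of the $I_\alpha$, whereas you factor the same radical computation through quasi-compactness of the principal opens $D^\textup{inv}(f)$ (your identity $(g_\alpha : \alpha)^\sharp = \sum_\alpha (g_\alpha)^\sharp$ makes this go through); for the ``only if'' direction the paper writes the invariant ideal as the sum of its finitely generated \emph{invariant} subideals and you instead pick finitely many ordinary generators and sharp at the end --- interchangeable. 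Your ``clean alternative'' via the retraction $\p \mapsto \p^*$ is genuinely different and slicker, but note that the paper establishes that retraction (in particular the primality of $\p^*$, via Landweber) only in Section 6 for the specific split Hopf algebroid attached to $D$, while the appendix lemma is stated for a general connected finite-type Hopf algebroid $(A, \Gamma)$; to use the retraction at that level of generality you would need to supply the general statement, so the self-contained covering argument is the safer one to keep as primary.
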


\begin{proof}
    First suppose $I = (x_1, \ldots, x_n)$ is a finitely generated invariant ideal in $A$ and consider $U = \Specinv(A) \ssm V^\textup{inv}(I)$. Suppose we have an open cover $U \subseteq \bigcup_\alpha U_\alpha$, where $U_\alpha = \Specinv(A) \ssm V^\textup{inv}(I_\alpha)$. Then
        \[V^\textup{inv}(I) \supseteq \bigcap_\alpha V^\textup{inv}(I_\alpha) = V^\textup{inv}\left(\sum_\alpha I_\alpha\right),\]
    so $I \subseteq \sqrt{\sum_{\alpha \in \mathcal{A}} I_\alpha}$. Therefore, there exist $\alpha_1, \ldots, \alpha_k$ and $m_1, \ldots, m_n$ such that $x_1^{m_1}, \ldots, x_n^{m_n} \in I_{\alpha_1} + \cdots + I_{\alpha_k}$. But then
        \[I \subseteq \sqrt{(x_1^{m_1}, \ldots, x_n^{m_n})} \subseteq \sqrt{I_{\alpha_1} + \cdots + I_{\alpha_k}},\]
    so
        \[V^\textup{inv}(I) \supseteq V^\textup{inv}(I_{\alpha_1} + \cdots + I_{\alpha_k}) = V^\textup{inv}(I_{\alpha_1}) \cap \cdots \cap V^\textup{inv}(I_{\alpha_k}),\]
    i.e., $U \subseteq U_{\alpha_1} \cup \cdots \cup U_{\alpha_k}$. Thus, $U$ is quasi-compact.

    Conversely, suppose $U$ is a quasi-compact open subset $\Specinv(A)$. Then $U = \Specinv(A) \ssm V^\textup{inv}(I)$ for some invariant ideal $I$ in $A$. Note that $I$ is the sum of its finitely generated subideals, so by applying $(-)^\sharp$, we see by Lemma \ref{lem:invariant-properties}(vii) that $I$ is the sum of its finitely generated invariant subideals. Let $\mathcal{F}$ denote the set of finitely generated invariant ideals contained in $I$, so that
        \begin{align*}
            U &= \Specinv(A) \ssm V^\textup{inv}(I) \\
            &= \Specinv(A) \ssm V^\textup{inv}\left(\sum_{J \in \mathcal{F}} J\right) \\
            &= \Specinv(A) \ssm \bigcap_{J \in \mathcal{F}} V^\textup{inv}(J) \\
            &= \bigcup_{J \in \mathcal{F}} \Specinv(A) \ssm V^\textup{inv}(J).
        \end{align*}
    Since $U$ is quasi-compact, there exist $J_1, \ldots, J_n \in \mathcal{F}$ such that
        \begin{align*}
            U &= \left(\Specinv(A) \ssm V^\textup{inv}(J_1)\right) \cup \cdots \cup \left(\Specinv(A) \ssm V^\textup{inv}(J_n)\right) \\
            &= \Specinv(A) \ssm \left(V^\textup{inv}(J_1) \cap \cdots \cap V^\textup{inv}(J_n)\right) \\
            &= \Specinv(A) \ssm V^\textup{inv}(J_1 + \cdots + J_n),
        \end{align*}
    where $J_1 + \cdots + J_n$ is a finitely generated invariant ideal in $A$ by Lemma \ref{lem:invariant-properties}(iv).
\end{proof}

\begin{lem}\label{lem:qc-t0}
    $\Specinv(A)$ is quasi-compact and $T_0$.
\end{lem}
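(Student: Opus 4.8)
The plan is to dispatch the two properties separately, since both come essentially for free from what has already been set up.

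\textbf{The $T_0$ property.} The bihomogeneous Zariski spectrum $\Spec^h(A)$ is $T_0$ by the standard argument: given distinct homogeneous primes $\p \neq \q$, one of them (say $\p$) is not contained in the other, so there is a homogeneous element $x \in \p \ssm \q$, and then the basic open set $D(x)$ contains $\q$ but not $\p$. Since $\Specinv(A)$ carries the subspace topology from $\Spec^h(A)$ and the $T_0$ property passes to subspaces, $\Specinv(A)$ is $T_0$.

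\textbf{Quasi-compactness.} The key observation is that the whole space is itself a ``distinguished quasi-compact open'': we have $\Specinv(A) = \Specinv(A) \ssm V^\textup{inv}(I)$ for the unit ideal $I = (1) = A$, because no proper prime contains the unit, so $V^\textup{inv}(A) = \emptyset$. The ideal $(1)$ is finitely generated and invariant (it is generated by $1$, and $\eta_R(1) = 1 \in A\Gamma$). Hence Lemma \ref{lem:qc-opens} applies directly and shows that $\Specinv(A)$ is quasi-compact. If one prefers to avoid invoking Lemma \ref{lem:qc-opens}, the same conclusion follows by unwinding its proof in this special case: given an open cover, write each member as $\Specinv(A) \ssm V^\textup{inv}(I_\alpha)$ with each $I_\alpha$ invariant (possible by the remark following Lemma \ref{lem:invariant-properties}); then $V^\textup{inv}\bigl(\sum_\alpha I_\alpha\bigr) = \bigcap_\alpha V^\textup{inv}(I_\alpha) = \emptyset$, so by Lemma \ref{lem:invariant-properties}(vi) there is no invariant prime over $\sum_\alpha I_\alpha$, forcing $1 \in \sqrt{\sum_\alpha I_\alpha}$ and hence $1 \in I_{\alpha_1} + \cdots + I_{\alpha_n}$ for finitely many indices, which yields the finite subcover.

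\textbf{Anticipated difficulty.} There is no substantive obstacle: the work is entirely contained in Lemma \ref{lem:qc-opens} together with elementary facts about Zariski spectra. The only point requiring a moment's thought is recognizing the ambient space $\Specinv(A)$ as the quasi-compact open complement of $V^\textup{inv}$ of the unit ideal, which reduces quasi-compactness to a case already handled.
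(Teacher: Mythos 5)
Your proof is correct and follows essentially the same route as the paper: quasi-compactness is obtained by recognizing $\Specinv(A) = \Specinv(A) \ssm V^\textup{inv}((1))$ and invoking Lemma \ref{lem:qc-opens}, exactly as the paper does (tersely). Your $T_0$ argument (subspace of the $T_0$ space $\Spec^h(A)$) is a cosmetic variant of the paper's direct separation of $\p_1 \not\subseteq \p_2$ by the open set $\Specinv(A) \ssm V^\textup{inv}(\p_1)$; both are fine.
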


\begin{proof}
    $\Specinv(A)$ is compact by Lemma \ref{lem:qc-opens}. To show that $\Specinv(A)$ is $T_0$, consider $\p_1 \neq \p_2$ in $\Specinv(A)$. Without loss of generality, assume $\p_1 \not\subseteq \p_2$. Then taking $U = \Specinv(A) \ssm V^\textup{inv}(\p_1)$, we have $\p_1 \notin U$ and $\p_2 \in U$.
\end{proof}

\begin{lem}\label{lem:qc-basis}
    The quasi-compact open subsets give a basis for the topology on $\Specinv(A)$.
\end{lem}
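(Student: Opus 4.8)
The plan is to show that every point of an open subset of $\Specinv(A)$ has a quasi-compact open neighborhood contained in it; by the characterization of quasi-compact opens already obtained in Lemma \ref{lem:qc-opens}, this will exhibit the quasi-compact open subsets as a basis. First I would record the shape of a general open set: by the definition of the subspace topology on $\Specinv(A) \subseteq \Spec^h(A)$, together with the remark following Lemma \ref{lem:invariant-properties} (which says $V^\textup{inv}(I) = V^\textup{inv}(I^\sharp)$), every open subset can be written as $U = \Specinv(A) \ssm V^\textup{inv}(I)$ for some invariant ideal $I$ in $A$; whether or not $I$ is taken invariant is not essential to what follows.

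Fix a point $\p \in U$, so that $I \not\subseteq \p$. Since $I$ is a homogeneous ideal and $\p$ a homogeneous prime, there is a homogeneous element $f \in I$ with $f \notin \p$. The next step is to pass to $(f)^\sharp$, the smallest invariant ideal containing $f$: because $(f)$ is finitely generated, Lemma \ref{lem:invariant-properties}(vii) gives that $(f)^\sharp$ is a finitely generated invariant ideal, so by Lemma \ref{lem:qc-opens} the subset $W = \Specinv(A) \ssm V^\textup{inv}((f)^\sharp)$ is a quasi-compact open subset of $\Specinv(A)$.

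It then remains to verify that $\p \in W \subseteq U$. The containment $\p \in W$ holds because $f \in (f)^\sharp$ and $f \notin \p$, so $(f)^\sharp \not\subseteq \p$; and $W \subseteq U$ holds because $(f)^\sharp \subseteq I^\sharp = I$ (using $f \in I$ together with Lemma \ref{lem:invariant-properties}(i)--(ii)), which forces $V^\textup{inv}(I) \subseteq V^\textup{inv}((f)^\sharp)$ and hence the reverse inclusion upon taking complements in $\Specinv(A)$. This produces the desired basis. I do not expect any genuine obstacle here: the substantive content is already packaged in Lemma \ref{lem:qc-opens} and in the finite-generation statement Lemma \ref{lem:invariant-properties}(vii), and the only mild point of care is the reduction of an arbitrary open set to the form $\Specinv(A) \ssm V^\textup{inv}(I)$, which is immediate from the subspace topology.
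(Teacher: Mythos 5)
Your proof is correct and follows essentially the same route as the paper: reduce to an open set of the form $\Specinv(A) \ssm V^\textup{inv}(I)$ with $I$ invariant, pick $f \in I \ssm \p$, and use the finitely generated invariant ideal $(f)^\sharp \subseteq I$ together with Lemma \ref{lem:qc-opens}. The extra justification via $(f)^\sharp \subseteq I^\sharp = I$ is exactly the implicit step in the paper's argument, so there is nothing to add.
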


\begin{proof}
    Suppose $\p \in \Specinv(A) \ssm V(I)$ for some invariant ideal $I$ in $A$. Since $I \not\subseteq \p$, choose $x \in I \ssm \p$. Then $(x)^\sharp$ is a finitely generated invariant ideal with $(x)^\sharp \subseteq I$ and $(x)^\sharp \not\subseteq \p$, i.e.,
        \[\p \in \Specinv(A) \ssm V^\textup{inv}\left((x)^\sharp\right) \subseteq \Specinv(A) \ssm V^\textup{inv}(I).\]
\end{proof}

\begin{lem}\label{lem:qc-intersection}
    The quasi-compact open subsets of $\Specinv(A)$ are closed under finite intersections.
\end{lem}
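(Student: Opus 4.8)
The plan is to reduce immediately to Lemma~\ref{lem:qc-opens} and show that the product of the defining ideals does the job. First I would take two quasi-compact open subsets $U_1, U_2 \subseteq \Specinv(A)$ and, using Lemma~\ref{lem:qc-opens}, write $U_1 = \Specinv(A) \ssm V^\textup{inv}(I)$ and $U_2 = \Specinv(A) \ssm V^\textup{inv}(J)$ for some finitely generated invariant ideals $I, J$ in $A$. Then
\[U_1 \cap U_2 = \Specinv(A) \ssm \left(V^\textup{inv}(I) \cup V^\textup{inv}(J)\right),\]
so it suffices to realize $V^\textup{inv}(I) \cup V^\textup{inv}(J)$ as $V^\textup{inv}(K)$ for some finitely generated invariant ideal $K$.

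The natural choice is $K = IJ$. I would first verify that $V^\textup{inv}(IJ) = V^\textup{inv}(I) \cup V^\textup{inv}(J)$: the inclusion $\supseteq$ is immediate from $IJ \subseteq I$ and $IJ \subseteq J$, and for $\subseteq$ one uses primality---if $\p \in \Specinv(A)$ contains $IJ$ but neither $I$ nor $J$, then picking $a \in I \ssm \p$ and $b \in J \ssm \p$ gives $ab \in IJ \subseteq \p$ with $ab \notin \p$, a contradiction. Next, $IJ$ is invariant by Lemma~\ref{lem:invariant-properties}(v), and it is finitely generated, since if $I = (x_1, \ldots, x_m)$ and $J = (y_1, \ldots, y_n)$ then $IJ$ is generated by the products $x_i y_j$. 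Hence $U_1 \cap U_2 = \Specinv(A) \ssm V^\textup{inv}(IJ)$ is quasi-compact open by Lemma~\ref{lem:qc-opens}, and the general case of a finite intersection follows by induction on the number of sets, the empty intersection $\Specinv(A)$ being quasi-compact by Lemma~\ref{lem:qc-t0} (or by taking $K = A$).

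I do not expect any genuine obstacle here. The one point requiring a little care is that one must argue with $IJ$ rather than $I \cap J$: the latter need not be finitely generated in general, and Lemma~\ref{lem:invariant-properties} records closure of invariance under products and sums but not under intersections of two ideals. Everything else is a routine manipulation with the closure operations from Appendix~A.
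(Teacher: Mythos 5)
Your proof is correct and follows essentially the same route as the paper: reduce via Lemma~\ref{lem:qc-opens} to finitely generated invariant ideals and use the product ideal $IJ$, which is invariant by Lemma~\ref{lem:invariant-properties}(v) and clearly finitely generated. The only difference is that you spell out the verification of $V^\textup{inv}(I) \cup V^\textup{inv}(J) = V^\textup{inv}(IJ)$ and the induction for more than two sets, which the paper leaves implicit.
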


\begin{proof}
    Consider $U = \Specinv(A) \ssm V^\textup{inv}(I)$ and $V = \Specinv(A) \ssm V^\textup{inv}(J)$ for finitely generated invariant ideals $I$ and $J$ in $A$. Then
        \[U \cap V = \Specinv(A) \ssm \left(V^\textup{inv}(I) \cup V^\textup{inv}(J)\right) = \Specinv(A) \ssm V^\textup{inv}(IJ),\]
    where $IJ$ is finitely generated and invariant by Lemma \ref{lem:invariant-properties}(v).
\end{proof}

\begin{lem}\label{lem:sober}
    $\Specinv(A)$ is sober, i.e., every nonempty irreducible closed subset has a generic point.
\end{lem}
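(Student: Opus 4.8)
The plan is to produce the (necessarily unique) generic point of an irreducible closed set directly, as the intersection of all the invariant primes it contains. So let $Z \subseteq \Specinv(A)$ be a nonempty irreducible closed subset and put $I = \bigcap_{\q \in Z}\q$. The first observation is that $I$ is a homogeneous invariant ideal: homogeneity is clear, and invariance is Lemma \ref{lem:invariant-properties}(iii). Next I would check that $Z = V^\textup{inv}(I)$. Since $Z$ is closed in the subspace $\Specinv(A)$, it has the form $V(\mathfrak a) \cap \Specinv(A)$ for some homogeneous ideal $\mathfrak a$ of $A$; then $\mathfrak a \subseteq I$ because $\mathfrak a$ lies in every $\q \in Z$, so any invariant prime containing $I$ also contains $\mathfrak a$ and thus belongs to $Z$, giving $V^\textup{inv}(I) \subseteq Z$. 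The reverse inclusion is immediate from the definition of $I$.

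The key step is showing that $I$ is prime, and for this I would argue by contradiction using irreducibility. If $I$ is not prime, there are homogeneous elements $a, b \notin I$ with $ab \in I$. For every $\q \in Z$ we have $ab \in I \subseteq \q$, and $\q$ is a homogeneous prime, so $a \in \q$ or $b \in \q$; hence $Z = (Z \cap V^\textup{inv}((a))) \cup (Z \cap V^\textup{inv}((b)))$, a union of two subsets closed in $Z$. Neither piece is all of $Z$: if $Z \subseteq V^\textup{inv}((a))$ then $a$ lies in every $\q \in Z$, forcing $a \in \bigcap_{\q \in Z}\q = I$, a contradiction, and symmetrically for $b$. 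So $Z$ would be a union of two proper closed subsets, contradicting irreducibility; therefore $I$ is prime.

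At this point $I$ is a homogeneous invariant prime, so $I \in \Specinv(A)$ and $I \in V^\textup{inv}(I) = Z$, while the closure of $\{I\}$ in $\Specinv(A)$ is $\{\q \in \Specinv(A) : I \subseteq \q\} = V^\textup{inv}(I) = Z$; thus $I$ is a generic point of $Z$, and it is unique because $\Specinv(A)$ is $T_0$ by Lemma \ref{lem:qc-t0}. I do not expect a serious obstacle: the whole argument is formal once one notes that the two pieces of $Z$ appearing above are genuinely closed (being intersections of $Z$ with the closed sets $V^\textup{inv}((a))$ and $V^\textup{inv}((b))$) and that being a homogeneous prime suffices to split $ab \in \q$, which is fine since $(a)$ and $(b)$ are homogeneous.
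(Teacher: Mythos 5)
Your argument is correct and is essentially the paper's proof: both identify the generic point as the intersection of the invariant primes in $Z$ (the paper phrases this as replacing the defining invariant ideal by its radical, using Lemma \ref{lem:invariant-properties}(vi), where you instead define $I=\bigcap_{\q\in Z}\q$ directly and invoke part (iii) for invariance), and both deduce primality from irreducibility via the decomposition $V^\textup{inv}((ab))=V^\textup{inv}((a))\cup V^\textup{inv}((b))$. The only cosmetic difference is that you work with homogeneous elements and check $Z=V^\textup{inv}(I)$ explicitly, which is consistent with the paper's standing convention that all elements and ideals are homogeneous.
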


\begin{proof}
    Suppose $V^\textup{inv}(I)$ is irreducible for some proper invariant ideal $I$ in $A$. By replacing $I$ with $\sqrt{I}$ if necessary (which is also invariant by Lemma \ref{lem:invariant-properties}(vi)), we may assume that $I$ is radical. For any $\p \in \Specinv(A)$, the closure of $\{\p\}$ is $V^\textup{inv}(\p)$, therefore it suffices to show that $I$ is prime (note that the generic point is necessarily unique). If $xy \in I$, then
        \[V^\textup{inv}(I) \subseteq V^\textup{inv}((xy)) = V^\textup{inv}((x)) \cup V^\textup{inv}((y)).\]
    Since $V^\textup{inv}(I)$ is irreducible, either $V^\textup{inv}\left((x)\right)$ or $V^\textup{inv}\left((y)\right)$ must contain $V^\textup{inv}(I)$, i.e., either $x \in I$ or $y \in I$ since $I$ is radical. Thus, $I$ is prime, as desired.
\end{proof}

The main result of this appendix now follows immediately from Lemmas \ref{lem:qc-t0}, \ref{lem:qc-basis}, \ref{lem:qc-intersection}, and \ref{lem:sober}.

\begin{prop}\label{prop:spectral}
    $\Specinv(A)$ is a spectral space.
\end{prop}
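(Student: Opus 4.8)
The plan is to verify each clause in the definition of a spectral space directly against the four lemmas just proved. Recall that we must check: (1) $\Specinv(A)$ is quasi-compact; (2) it is $T_0$; (3) it has a basis of quasi-compact open subsets which is closed under finite intersections; and (4) every nonempty irreducible closed subset has a unique generic point.

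Clauses (1) and (2) are precisely the content of Lemma \ref{lem:qc-t0}. For clause (3), Lemma \ref{lem:qc-basis} gives that the quasi-compact opens form a basis for the topology, and Lemma \ref{lem:qc-intersection} gives that this family is stable under finite intersections; note also that $\Specinv(A)$ itself is quasi-compact and open (by Lemma \ref{lem:qc-t0}), so the empty intersection causes no trouble. For clause (4), Lemma \ref{lem:sober} supplies a generic point for every nonempty irreducible closed subset, and uniqueness of that generic point is automatic from the $T_0$ axiom of clause (2), since two points with the same closure must coincide. Assembling these observations yields the proposition.

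I do not expect any genuine obstacle at this stage: the substantive work lies entirely in the preceding lemmas, and in particular in Lemma \ref{lem:qc-opens}, whose description of the quasi-compact opens of $\Specinv(A)$ as complements of $V^\textup{inv}(I)$ for finitely generated invariant ideals $I$ underpins Lemmas \ref{lem:qc-t0}, \ref{lem:qc-basis}, and \ref{lem:qc-intersection}; that lemma in turn rests on the finiteness statement Lemma \ref{lem:invariant-properties}(vii), which is where the connectedness and finite-type hypotheses on $(A,\Gamma)$ are actually used. The present proof is thus purely a matter of matching the accumulated facts to the definition.
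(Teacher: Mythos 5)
Your proposal is correct and matches the paper's argument exactly: the paper also deduces the proposition immediately from Lemmas \ref{lem:qc-t0}, \ref{lem:qc-basis}, \ref{lem:qc-intersection}, and \ref{lem:sober}. Your added remark that uniqueness of generic points follows from the $T_0$ property is a fine (and standard) clarification, consistent with the parenthetical in the proof of Lemma \ref{lem:sober}.
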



\printbibliography

\end{document}